\providecommand{\noopsort}[1]{} %for tussenvoegsel-sorting in bibtex: \noopsort
\mathchardef\ordinarycolon\mathcode`\:
\def\la{{\lambda}}
\newcommand{\N}{\mathbb N}
\newcommand{\Z}{\mathbb Z}
\newcommand{\R}{\mathbb R}
\newcommand{\C}{\mathbb C}
\renewcommand{\S}{\mathcal{S}}
\renewcommand{\H}{\mathcal{H}}
\renewcommand{\L}{\mathcal{L}}
\newcommand{\mB}{\mathcal{B}}
\newcommand{\W}{\mathcal{W}}
\renewcommand{\O}{\mathcal{O}}
\def\Re{{\mathrm{Re}\,}}
\newcommand{\loc}{\textnormal{loc}}
\newcommand{\oV}{\overline{V}}
\newcommand{\supnorm}[1]{\norm{ #1 }_\infty}
\newcommand{\absnorm}[1]{\norm{ #1 }_1}
\newcommand{\norm}[1]{\left\| #1 \right\|}
\newcommand{\nrm}[2]{\left\|#1\right\|_{#2}}
\newcommand{\Tr}{\operatorname{Tr}}
\newtheorem{thm}{Theorem}[section]
\newtheorem{lem}[thm]{Lemma}
\newtheorem{prop}[thm]{Proposition}
\newtheorem{cor}[thm]{Corollary}
\newtheorem{defn}[thm]{Definition}
\newtheorem{exmp}[thm]{Example}
\newtheorem{remark}[thm]{Remark}
\author{Teun D. H. van Nuland and Anna Skripka}
\begin{document}
\title{Higher-order spectral shift function for resolvent comparable perturbations}
\date{\today}

%\email{skripka@math.unm.edu}
%\subjclass[2000]{Primary 47A56, 47B10}

\maketitle
\let\thefootnote\relax\footnotetext{\textbf{Mathematics Subject Classification (2010):} 47A56, 47B10}
\let\thefootnote\relax\footnotetext{\textbf{Keywords:} resolvent comparable, perturbation theory, trace formula, multiple operator integral, spectral shift}
%\let\thefootnote\relax\footnotetext{\textbf{Declarations of interest:} none}
%\begin{center}
%\end{center}

%\address{A.S., Department of Mathematics and Statistics, University of New Mexico, 311 Terrace Street NE, Albuquerque, NM  87106, USA}
%\email{skripka@math.unm.edu}
%\subjclass[2000]{Primary 47A56, 47B10}
%Functions whose values are linear operators
%Operators belonging to operator ideals
%47A55 Perturbation theory?

%\keywords{Resolvent comparable perturbation, trace formula, multiple operator integral, spectral shift}

\begin{abstract}
Given a pair of self-adjoint operators $H$ and $V$ such that $V$ is bounded and $(H+V-i)^{-1}-(H-i)^{-1}$ belongs to the Schatten-von Neumann ideal $\mathcal{S}^n$, $n\ge 2$, of operators on a separable Hilbert space, we establish higher order trace formulas for a broad set of functions $f$ containing several major classes of test functions and also establish existence of the respective locally integrable real-valued spectral shift functions determined uniquely up to a low degree polynomial summand. Our result generalizes the result of \cite{PSS13} for Schatten-von Neumman perturbations $V$ and settles earlier attempts to encompass general perturbations with Schatten-von Neumman difference of resolvents, which led to more complicated trace formulas for more restrictive sets of functions $f$ and to analogs of spectral shift functions lacking real-valuedness and/or expected degree of uniqueness. Our proof builds on a general change of variables method derived in this paper and significantly refining those appearing in \cite{vNS21,PSS15,S17} with respect to several parameters at once.
\end{abstract}

\section{Introduction}
In many examples in mathematics and physics one is interested in a variational behavior of an operator functional of the form
\begin{align}\label{eq:operator trace functional}
	V\mapsto ``\Tr(f(H+V))",
\end{align}
where $H$ is a self-adjoint unbounded operator, $V$ a bounded operator called perturbation, and $f:\R\to\C$ a test function typically used to ensure the well-definedness either of the right-hand side of \eqref{eq:operator trace functional}, or of its respective Taylor remainder. Usually the function $f$ plays a secondary role, while important, physically relevant information is encoded in the %spectrum of $H+V$, and the
behavior of the spectrum of $H+V$ as the perturbation $V$ varies.
The difference between the spectra of the operators $H$ and $H+V$ is captured by the \textit{first-order spectral shift function}, which is
a real-valued locally integrable function $\eta_1$ determined by $H$ and $V$ and satisfying
\begin{align*}
\Tr(f(H+V)-f(H))=\int_\R f'(x)\eta_1(x)\,dx.
\end{align*}
More delicate information on the motion of the spectrum of $H+V$ is captured by the \textit{$n^\text{th}$-order spectral shift function} (see, e.g., \cite{SZ20}), %{\cblue for any $n\in\N$},
which is a real-valued locally integrable function $\eta_n$ determined by $H$ and $V$
 uniquely up to a certain degree and satisfying
\begin{align}
\label{tfintro}
	\Tr\Big(f(H+V)-\sum_{k=0}^{n-1}\frac{1}{k!}\frac{d^k}{dt^k}f(H+tV)\big|_{t=0}\Big)=\int_\R f^{(n)}(x)\eta_n(x)\,dx.
\end{align}
The spectral shift function $\eta_1$ emerged from the seminal mathematical and physical work of Krein \cite{Krein53} and Lifshits \cite{Lifshits}. Its generalization, the higher-order spectral shift function $\eta_n$, was suggested by Koplienko in \cite{Koplienko84}, and this function is the focus of this paper.

Since the introduction of the spectral shift function, there has been a slowly closing gap between theory and applications in terms of the admissible forms of $H$, $V$ and $f$. In many applications, neither the perturbation $V$ nor the resolvent $(H-i)^{-1}$ of $H$ is a compact operator. However, the property
\begin{align}
\label{rcc}
(H+V-i)^{-1}-(H-i)^{-1}\in\S^m,
\end{align}
which is known as the \textit{resolvent comparable condition}, can be satisfied. Examples of Dirac and Schr\"odinger operators satisfying \eqref{rcc} are given in \cite{S21} and their analogs in noncommutative geometry are discussed in \cite[Section 5]{vNS21} (see Remark \ref{rem:applications} for details).

\paragraph{Main result and potential applications.}

Let $\W^{n}_{k}$ denote the set of functions $f\in C^{n}$ satisfying $\widehat{(fu^l)^{(m)}}\in L^1$ for all $l=0,\ldots,k$ and $m=0,\ldots,n$, where $u(x)=x-i$. The set $\W^{n}_{k}$ includes all Schwartz functions and bounded rational functions that decay sufficiently fast (see Example \ref{wex} for details).

In our main result (Theorems \ref{main} and \ref{main even}) we establish the trace formula \eqref{tfintro} in the general case \eqref{rcc}. The respective spectral shift function is integrable with a smaller weight in the even-order case.
%larger weight in the odd-order case.
These two results are stated below.

\begin{thm}
\label{main in intro}
Let $n\in\N$, $n\ge 2$,  $H$ be a self-adjoint operator in $\H$, and let $V\in\mB(\H)_{\text{sa}}$ satisfy $(H+V-i)^{-1}-(H-i)^{-1}\in\S^n$.
Then, there exists a real-valued function $\eta_{2n-1}\in L^1(\R,u^{-4n-2}(x)dx)$
$($called the spectral shift function$)$ such that
\begin{align}
\label{tf2n-1 in intro}
\Tr\Big(f(H+V)-\sum_{k=0}^{2n-2}\frac{1}{k!}\frac{d^k}{dt^k}f(H+tV)\big|_{t=0}\Big)
=\int f^{(2n-1)}(x)\eta_{2n-1}(x)dx
\end{align}
for every $f\in\W^{2n-1}_{4n+2}$, and such that
\begin{align}\label{eq:main bound SSF in intro}
\int\frac{|\eta_{2n-1}(x)|}{(1+|x|)^{4n+2}}\,dx\leq c_{n}(1+\norm{V}^{2})\norm{V}^{n-1}\nrm{(H-i)^{-1}V(H-i)^{-1}}{n}^{n}
\end{align}
for a constant $c_{n}>0$.
The locally integrable function $\eta_{2n-1}$ is determined by \eqref{tf2n-1 in intro} uniquely up to a polynomial summand of degree at most $2n-2$.
\end{thm}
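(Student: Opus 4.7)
The plan is threefold: (i) represent the Taylor remainder of order $2n-1$ of $f(H+V)$ as a trace of a multiple operator integral (MOI) with $2n-1$ entries $V$; (ii) exploit the algebraic identity
$$
V = u(H)\,W\,u(H), \qquad W:=(H-i)^{-1}V(H-i)^{-1}\in\S^n,\ u(x)=x-i,
$$
to substitute $V$ in $n$ of the $2n-1$ MOI slots and absorb the unbounded factors $u(H)$ into the divided-difference kernel $f^{[2n-1]}$ via a Leibniz-type identity; and (iii) apply Schatten--H\"older to obtain the trace-norm bound \eqref{eq:main bound SSF in intro} and then extract $\eta_{2n-1}$ by a duality argument.

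Concretely, standard MOI theory for $f\in\W^{2n-1}_{4n+2}$ gives
$$
f(H+V)-\sum_{k=0}^{2n-2}\tfrac{1}{k!}\tfrac{d^k}{dt^k}f(H+tV)\big|_{t=0}\;=\;T^{f^{[2n-1]}}_{H+V,H,\ldots,H}(V,\ldots,V),
$$
where the regularity class $\W^{2n-1}_{4n+2}$ is calibrated precisely so that after step (ii) the transformed kernel still has an $L^1$-Fourier symbol. Substituting $V\mapsto u(H)Wu(H)$ in $n$ chosen slots introduces $2n$ additional $u(H)$-factors; each sits adjacent to a spectral measure and therefore contributes a monomial weight $(\lambda_j-i)^{c_j}$, with $\sum_j c_j=2n$, to the kernel. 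The Leibniz rule for divided differences then allows one to express
$$
f^{[2n-1]}(\lambda_0,\dots,\lambda_{2n-1})\prod_j(\lambda_j-i)^{c_j}
$$
as a linear combination of divided differences of the functions $f\cdot u^l$ for $l\leq 4n+2$, each of which, by the very definition of $\W^{2n-1}_{4n+2}$, has an $L^1$-Fourier symbol and hence produces a bounded MOI.

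Step (iii) applies Schatten--H\"older: the $n$ MOI entries $W\in\S^n$ give a trace-class factor of norm $\|W\|_n^n$, the remaining $n-1$ bounded entries $V$ contribute $\|V\|^{n-1}$, and the factor $1+\|V\|^2$ of \eqref{eq:main bound SSF in intro} reflects a second-order resolvent mismatch between the outer $H+V$-spectrum of $f^{[2n-1]}$ and the inner $H$-spectra. Taking the trace produces a continuous linear functional $\Lambda:f\mapsto\Tr(\text{remainder})$ on $\W^{2n-1}_{4n+2}$ which annihilates polynomials of degree $\leq 2n-2$. A representation argument in the spirit of \cite{PSS13,vNS21} then yields a real-valued locally integrable $\eta_{2n-1}$, unique modulo polynomials of degree $\leq 2n-2$, that represents $\Lambda$ via integration against $f^{(2n-1)}$; the weighted $L^1$-bound on $\eta_{2n-1}$ in \eqref{eq:main bound SSF in intro} is a direct transcription of the estimate from step (iii), and real-valuedness follows by testing with real $f$ and using self-adjointness of the Taylor remainder when $H$ and $V$ are self-adjoint.

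The main obstacle is step (ii). Expressing the weighted kernel $f^{[2n-1]}\prod_j(\lambda_j-i)^{c_j}$, for every multi-index $(c_j)$ arising from simultaneous substitution in $n$ slots, as a combination of divided differences of $fu^l$ with $l\leq 4n+2$ while preserving the $L^1$-Fourier structure of the MOI is a delicate combinatorial task. Carrying it out in several slots at once, and bookkeeping the constants so that the precise exponents $n-1$, $n$ and the factor $1+\|V\|^2$ in \eqref{eq:main bound SSF in intro} come out correctly, is the general change-of-variables method promised in the abstract, and it is what refines the single-slot analogues in \cite{vNS21,PSS15,S17} to the multi-slot situation needed here.
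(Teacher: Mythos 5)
Your high-level strategy (Taylor remainder as a multiple operator integral, insertion of resolvents to create $\S^n$ factors, Schatten estimates, duality) is the same as the paper's, but two essential ingredients are missing, and without them the argument does not close. First, your step (iii) cannot be completed by ``Schatten--H\"older'' alone. After substituting $W=(H-i)^{-1}V(H-i)^{-1}$ into $n$ of the slots, the remaining arguments of the multiple operator integral are copies of $V$ itself, which are merely bounded, i.e.\ lie in $\S^\infty$. The known trace estimates of the form $|\Tr(T_{g^{[m]}}(\cdot))|\le c\,\supnorm{g^{(m)}}\prod_j\nrm{\cdot}{\alpha_j}$ (Lemma \ref{lem43}, resting on \cite{PSS13}) require \emph{all} exponents $\alpha_j$ to be finite; there is no bounded extension of the multiple operator integral into $\S^1$ controlled by the sup-norm of the symbol when some $\alpha_j=\infty$. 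This is exactly the obstruction that the paper's Theorem \ref{thm:Extra expansion step} circumvents by a \emph{second} change of variables combined with a Schatten-norm interpolation (the exponent $r=|J|/(n+|J|)$ in \eqref{pdef}), which also explains how the precise powers $\norm{V}^{n-1}$ and $\nrm{(H-i)^{-1}V(H-i)^{-1}}{n}^{n}$ arise. Your proposal treats this as routine, but it is the central analytic difficulty. Relatedly, your picture of step (ii) omits that the change of variables does not merely re-weight the kernel: the identity \eqref{eq:adding one weight} produces, besides the weighted term, lower-order divided differences in which adjacent perturbations are merged into products $\oV_{i_{j-1},i_j}$, and it is precisely these merged products (with varying Schatten exponents, some infinite) that must be handled.

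Second, your duality argument at best produces a complex Radon measure $\mu_n$ representing the trace functional, not a locally integrable \emph{function} $\eta_{2n-1}$. Absolute continuity of this measure is not automatic and is not supplied by ``a representation argument in the spirit of \cite{PSS13,vNS21}'': in \cite{PSS13} absolute continuity is itself a major theorem proved for $V\in\S^n$, and transferring it to the resolvent comparable setting is the content of the paper's entire Section 4 (Lemma \ref{lem:V_k compa conv}, Proposition \ref{prop:Tr(R^p(V-W)) conv comp}, Theorem \ref{thm:eta_n}), where one approximates $V$ by finite-rank $V_k$, invokes the known spectral shift functions $\eta_{m,k}\in L^1$, and shows the traces converge uniformly over $\{f\in C_c^{m+1}(-a,a):\supnorm{f^{(m)}}\le1\}$ so that $(\eta_{m,k})_k$ is Cauchy in $L^1_\loc$. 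Only after this does one match the resulting $L^1_\loc$ function with the measure $\mu_n$ up to a polynomial of degree at most $2n-2$ and transfer the weighted $L^1$ bound. Without an argument for absolute continuity, your proof establishes the trace formula only against a measure and does not prove the existence assertion of the theorem.
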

Similarly, we have the following result in the even-order case.
\begin{thm}
\label{main even in intro}
Let $n\in\N$, $n\ge 2$,  $H$ be a self-adjoint operator in $\H$, and let $V\in\mB(\H)_{\text{sa}}$ satisfy $(H+V-i)^{-1}-(H-i)^{-1}\in\S^n$.
Then, there exists a real-valued function $\eta_{2n}\in L^1(\R,u^{-4n-3}(x)dx)$ such that
\begin{align}\label{tf2n-1 even in intro}
\Tr\Big(f(H+V)-\sum_{k=0}^{2n-1}\frac{1}{k!}\frac{d^k}{dt^k}f(H+tV)\big|_{t=0}\Big)=\int f^{(2n)}(x)\eta_{2n}(x)dx
\end{align}
for every $f\in\W^{2n}_{4n+3}$ and such that
\begin{align*}
\int\frac{|\eta_{2n}(x)|}{(1+|x|)^{4n+3}}\,dx\leq c_{n}(1+\norm{V}^{2})\norm{V}^{n}\nrm{(H-i)^{-1}V(H-i)^{-1}}{n}^{n}
\end{align*}
for a constant $c_{n}>0$.
The locally integrable function $\eta_{2n}$ is determined by \eqref{tf2n-1 even in intro} uniquely up to a polynomial summand of degree at most $2n-1$.
\end{thm}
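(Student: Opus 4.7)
My plan is to follow the architecture used for the odd-order case Theorem~\ref{main in intro}, with the three parameters (derivative order, weight exponent, and size of the test-function class) each shifted by one. The guiding idea is to transport the problem from the unbounded pair $(H,H+V)$ to the bounded pair $(B,B')=(\phi(H),\phi(H+V))$ via the Cayley-type map $\phi(x)=(x-i)^{-1}$, so that $B'-B\in\S^{n}$ by hypothesis. The existing $\S^{n}$-based higher-order spectral shift theory for bounded self-adjoint operators (adapted from \cite{PSS13}) then furnishes a real-valued, integrable spectral shift function $\xi$ on the bounded spectrum of $B$ together with the corresponding trace formula for suitable $g(B)$, $g(B')$, and the change-of-variables machinery developed in this paper is used to pull this formula back to the real line so as to read off $\eta_{2n}$.

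Concretely, given $f\in\W^{2n}_{4n+3}$ I would set $g:=f\circ\phi^{-1}$; the chain rule expresses $g^{(k)}$ as a finite combination of $f^{(j)}u^{l}$ with $j\leq k$ and $l\leq 2k$, so the defining conditions of $\W^{2n}_{4n+3}$ translate exactly into the regularity required to feed $g$ into the bounded-side theory. On the trace side, the multilinear change-of-variables identity writes each $\frac{d^{k}}{dt^{k}}f(H+tV)\big|_{t=0}$ as a finite combination of multiple operator integrals built from $g^{[m]}$ and resolvent powers; summing over $k=0,\dots,2n-1$ and matching against the Taylor expansion of $g(B+s(B'-B))$ at $s=0$ yields an identity in which the left-hand side of \eqref{tf2n-1 even in intro} appears up to a trace-class remainder representable through $\xi$. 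Performing the change of variable $\lambda=\phi(x)$ in the resulting integral then puts it in the form $\int f^{(2n)}(x)\eta_{2n}(x)\,dx$, where $\eta_{2n}$ is defined as $\xi\circ\phi$ multiplied by a Jacobian factor of the form $u^{p}(x)$ plus finitely many explicit polynomial corrections. The weight $u^{-4n-3}$ and the norm bound follow by tracking factors of $u$ in the chain rule together with the $\S^{n}$-multilinear trace estimates producing the $\|(H-i)^{-1}V(H-i)^{-1}\|_{n}^{n}$ factor; one extra copy of $V$ relative to the odd case contributes the additional $\|V\|$. Real-valuedness descends from that of $\xi$, and uniqueness modulo polynomials of degree $\leq 2n-1$ follows by verifying that $\{f^{(2n)}:f\in\W^{2n}_{4n+3}\}$ annihilates, in the appropriate weighted pairing, only polynomials of degree $<2n$.

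I expect the main obstacle to be the even-order bookkeeping in the change of variables. The nonlinear correspondence $V\leftrightarrow B'-B$ does not preserve Taylor truncation order, so the bounded-side identity will produce extra multilinear terms of mixed order. Showing that these rewrite as an integral against $f^{(2n)}$ plus a polynomial absorbable into the non-uniqueness of $\eta_{2n}$ --- while arriving at the specific weight $u^{-4n-3}$ rather than $u^{-4n-2}$ of the odd case, and preserving real-valuedness throughout --- is the delicate step where the refinement of \cite{vNS21,PSS15,S17} with respect to several parameters at once, announced in the abstract, is presumably deployed.
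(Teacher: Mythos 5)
Your proposal routes the problem through the bounded pair $(B,B')=\bigl((H-i)^{-1},(H+V-i)^{-1}\bigr)$ and the $\S^n$-perturbation theory of \cite{PSS13} on the bounded side, then pulls back. This is essentially the strategy of \cite{PSS15} and \cite{S17}, and the step you flag as "the main obstacle" --- the mismatch between the Taylor truncation of $f(H+tV)$ in $V$ and that of $g(B+s(B'-B))$ in $B'-B$ --- is precisely the obstruction that confined those works to the non-optimal trace formulas \eqref{trN} and \eqref{trPSS}, with complicated correction terms $T_{\psi^{[k]}}^{H,\dots,H}(V_{j_1},V_{j_2-j_1},\dots)$ in place of the genuine Taylor polynomial, complex-valued analogs of the spectral shift function, and restricted function classes. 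Your hope that the extra mixed-order multilinear terms "rewrite as an integral against $f^{(2n)}$ plus a polynomial absorbable into the non-uniqueness" is exactly what is not known how to do along this route; nothing in the paper's machinery is deployed to accomplish it, so this is a genuine gap rather than deferred bookkeeping.

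The paper's proof never passes to the bounded picture. It starts from the exact identity $R_{2n,H,f}(V)=T^{H,H+V,H,\ldots,H}_{f^{[2n]}}(V,\ldots,V)$ of Theorem \ref{rm}, applies the new change of variables (Theorem \ref{thm:generalized change of variables} via Corollary \ref{Vbarin MOI}, formula \eqref{T_f^[2n]}) to redistribute resolvents of $H$ and $H+V$ among the arguments --- crucially preserving the operators supplying the spectral data --- and then invokes the trace estimate of Theorem \ref{thm:Extra expansion step} to represent each resulting term as $\int (fu^{p})^{(p)}u^{p+2}\,d\mu_{n,p}$ with the stated norm bound (Lemma \ref{lem:measure for oV's even}); partial integration (Lemma \ref{lem:partial integration}) then yields the weight $u^{-4n-3}$ and the uniqueness statement. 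Absolute continuity and real-valuedness do not come from a bounded-side $\xi$ at all: they come from a separate finite-rank approximation argument (Lemma \ref{lem:V_k compa conv}, Proposition \ref{prop:Tr(R^p(V-W)) conv comp}, Theorem \ref{thm:eta_n}) producing a locally integrable density, matched to the measure above up to a polynomial, followed by taking real parts using that $\Tr(R_{2n,H,f}(V))$ is real for real $f$. You would need to replace the core of your argument with something of this kind (or supply the missing reconciliation of the two Taylor expansions, which would itself be a new result).
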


The aforementioned existence and integrability of the higher-order spectral shift functions satisfying the optimal trace formulas (i.e., the ones based on Taylor approximations of the perturbed operator function) are likely to give rise to results extending those known for the first-order spectral shift function. As outlined in \cite{BP}, the first-order spectral shift function has found many useful applications due to its connections with other important objects of mathematics. For instance, the relation between the regularized first-order perturbation determinant and the first-order spectral shift function was used in \cite{Yafaev07} to transfer results between the two objects in the setting of Schr\"{o}dinger operators with rapidly decaying potentials. The higher-order regularized perturbation determinant was related to the higher-order spectral shift function in \cite{Koplienko84}, although the existence of the latter had not been proven even under the more restrictive assumption $V\in\S^n$.
Since now we confirm the existence of the higher-order spectral shift functions in the general case, their connection with the perturbation determinant could be investigated for potential applications to new models.

We note that \eqref{rcc} is also a natural assumption in the case of an unbounded $V$ and that parts of this paper are generalizable to unbounded $V$. Thus, this paper might be an important step towards obtaining a higher-order spectral shift function for unbounded perturbations.

\paragraph{Novelty of our method.}

In order to derive our main results, we develop a new and rather general change of variables method for multilinear operator integrals, namely the one provided by the combination of Theorem \ref{thm:generalized change of variables} and Theorem \ref{thm:Extra expansion step}.

Theorem \ref{thm:generalized change of variables} unifies existing results in which a generic multilinear operator integral is expanded into a finite sum of multiple operator integrals whose inputs are multiplied by resolvents at specific locations, in practice improving their summability properties. In contrast to the change of variables formula derived in \cite{vNS21}, the locations where resolvents can be placed are not fixed by the new formula but instead can be flexibly adapted to the situation. In contrast to the change of variables formulas in \cite{PSS15,S17}, the operators supplying spectral data and the respective types of function classes are preserved.
Two significant advantages of the aforementioned flexibility and invariance are that they allow to
\begin{enumerate}[(i)]
\item enlarge the admissible class of functions and express it in terms of familiar function spaces,
\item extend trace formulas and properties of the multiple operator integral from the case where perturbations are Schatten class to the resolvent comparable case.
\end{enumerate}

In Theorem \ref{thm:Extra expansion step} we establish an estimate for the trace of a multilinear operator integral on tuples of noncompact resolvent comparable perturbations that is analogous to the best known estimate for the $\S^p$-norm of a multilinear operator integral on perturbations in $\S^q$, where $p,q>1$ (see \cite{PSS13}).
While the analytical result of Theorem \ref{thm:Extra expansion step} looks quite specific, its %core
essence consists in circumventing the absence of good bounds for multilinear operator integrals in the trace class norm $(p=1)$ which is central to many applications of operator integrals.

Since Theorem \ref{thm:Extra expansion step} is proved under rather relaxed assumptions on perturbations and since already in this paper we need to apply the result of Theorem \ref{thm:generalized change of variables} several times for different purposes, we anticipate that both results will also find applications in other problems.
%a problem that is central to many applications of multiple operator integrals
%namely their poorer continuity properties when one of the arguments is noncompact

\paragraph{Comparison to previous results.}

Below we make a brief overview of most relevant prior results and state advantages of our new result.

The existence of a real-valued function $\eta_1\in L^1\big(\R,\frac{dx}{(1+|x|)^{1+\epsilon}}\big)$, $\epsilon>0$, satisfying \eqref{tfintro} with $n=1$ under the assumption \eqref{rcc} was settled in \cite[Theorem~3]{Krein62} %\cite[p.~47, Theorem 0.9.4]{Yafaev10}
and \cite[Section 8.8 (3)]{YGT}. The respective trace formula holds for a sufficiently large class of functions ensuring uniqueness of $\eta_1$ up to a constant summand.

The existence of the second-order spectral shift function
$\eta_2\in L^1\big(\R,\frac{dx}{(1+|x|)^{1+\epsilon}}\big)$, $\epsilon>0$, was established in \cite{Koplienko84} under the more restrictive assumption $V|H-i|^{-\frac12}\in\S^2$.
It was suggested in \cite{Koplienko84} and confirmed in \cite{PSS13} that a higher-order Taylor remainder can be expressed via a higher-order spectral shift measure in the case $V\in\S^n$. It was also established in \cite{PSS13} that those measures are absolutely continuous and, thus, described by a \textit{function}.

Many attempts were made to obtain \eqref{tfintro} under the assumption \eqref{rcc} for $m\ge 2$, but until now they resulted in much more complicated trace formulas for more restrictive sets of functions $f$ and the respective $\eta_n$ lacked real-valuedness and/or expected degree of uniqueness.

For instance, \cite[Theorem~3.2]{Neidhardt88} proves
that if $H$ and $V$ are self-adjoint operators satisfying \eqref{rcc} with $m=2$, then there exists a real-valued function
$\rho=\rho_{H,V}\in L^1\big(\R,\frac{dx}{(1+|x|)^2}\big)$ such that
\begin{align}
\label{trN}
\nonumber
&\Tr\left(\psi(H+V)-\psi(H)+(H-i)\frac{d}{dt}\psi(H+tM)\big|_{t=0}(H+i)\right)\\
&\quad=-\int_{-\infty}^\infty\rho(x)\frac{d}{dx}\big((1+x^2)\,\psi'(x)\big)\,dx
\end{align}
holds for all bounded rational functions $\psi$. Here $M$ is a bounded self-adjoint operator that equals $M=-\Re\big((H+V-i)^{-1}V(H+i)^{-1}\big)$ when $V$ is bounded.
Another trace formula with $n=2$ for $H,V$ satisfying \eqref{rcc} with $m=2$ under the additional assumption that $H$ and $H+V$ are bounded from below, without assuming that $V$ extends to a bounded operator, was obtained in \cite[Theorem 9.1]{GPS}.

It was established in \cite[Theorem 3.5]{PSS15} that if \eqref{rcc} is satisfied with $m\ge 2$, then there exists (a complex-valued function)
$\gamma_n=\gamma_{n,H,V}\in L^1\Big(\R,\frac{dx}{(1+|x|)^n}\Big)$ such that
%$$\int_\R|\gamma_n(x)|\,\frac{dx}{(1+x^2)^{\frac{n}{2}}}\le c_n\|(H+V-i)^{-1}-(H-i)^{-1}\|_n^n$$ and
\begin{align}
\label{trPSS}
\nonumber
&\Tr\bigg(\psi(H+V)-\psi(H)-\sum_{k=1}^{n-1}\,\sum_{\substack{j_1,\ldots,j_k\in\{1,\ldots,n-1\}\\
j_1<\ldots<j_k}}T_{\psi^{[k]}}^{H,\dots,H}(V_{j_1},V_{j_2-j_1},\ldots,V_{j_k-j_{k-1}})\bigg)\\
&=\frac{i^{n-1} }{2^{n-1}}\cdot\int_{-\infty}^\infty\,\frac{d^{n-1}}{dx^{n-1}}
\left((x-i)^n\psi'(x)\right)\gamma_n(x)\,dx
\end{align}
for all bounded rational functions $\psi$ with poles in the upper half-plane (but not in both half-planes), where
\begin{align*}
%\label{utosa1V}
V_p=\big((I-V(H_1-i)^{-1})V(H_0-i)^{-1}\big)^{p-1}(I-V(H_1-i)^{-1})V\quad (p\in\N)
\end{align*}
and $T_{\psi^{[k]}}^{H,\dots,H}$ is a multilinear operator integral given by Definition \ref{nosep}.
The restriction on location of poles of $\psi$ in one half-plane was eliminated in \cite[Theorem 5.3, Corollary 5.4]{S17}, but the price to pay was an even more complicated trace formula.
%We exemplify the latter in the simpler case when $\|V\|<\frac12$. Namely, it was established in \cite[Theorem 5.3]{S17} that
%\begin{align}
%\label{trS}
%\nonumber
%&\Tr\bigg(\psi(H+V)-\psi(H)
%-\sum_{k=1}^{n-1}\sum_{r=1}^k\sum_{\substack{j_1,\ldots,j_r\ge
%1\\j_1+\ldots+j_r=k}} d_{k,r,j_1,\ldots,j_r}\sum_{l=1}^r\sum_{1\leq i_1<\ldots<i_l\leq r} T_{\psi^{[l]}}^{H,\ldots,H}(X_{i_1},\ldots,X_{i_l})\bigg)\\
%&\quad=\frac{i^{n-1}}{2^{n-1}}\cdot\int_\R\frac{d^{n-1}}{dx^{n-1}}\big((x-i)^n\psi'(x)\big)
%\gamma_n(x)\,dx
%\end{align}
%for all bounded rational functions $\psi$, where the constants $d_{k,r,j_1,\ldots,j_r}$ are computed in \cite[Theorem 4.1]{S17} and \cite[Theorem 5.3.4]{ST19},
%%$\tilde\gamma_n=\tilde\gamma_{n,H,V}\in L^1\Big(\R,\frac{1}{(1+x^2)^{\frac{n}{2}}}\,dx\Big)$
%\begin{align*}
%X_{i_q}=(H-i)\prod_{p=j_{i_{q-1}+1}}^{j_{i_q}}
%\bigg(\big[\log\big(I-2i(H+V-i)^{-1}V(H+i)^{-1}\big)\big]^p(H+i)\bigg),
%\end{align*}
%and $\log$ is the principal branch of the logarithm with the cut along $(-\infty,0]$, and $i_0=0$.
%
More generally, the trace formulas \eqref{trN}, \eqref{trPSS}, and the one of \cite{S17} hold for every function $\psi$ of the form $\psi(\la)=\varphi\big(\frac{\la+i}{\la-i}\big)$, where
$\varphi(z)=\sum_{k=-\infty}^\infty a_kz^k$ and the Fourier series  of the respective derivative of $\varphi$ satisfies the following properties: $\sum_{k=-\infty}^\infty|a_k|k^2<\infty$ in the case of \eqref{trN}, $\sum_{k=0}^\infty|a_k|k^n<\infty$ and $a_{-k}=0$ for $k\in\N$ in the case of \eqref{trPSS}, $\sum_{k=-\infty}^\infty|a_k|k^n<\infty$ in the case of \cite{S17}, respectively.
However, no description of the set of admissible functions $\psi$ in terms of familiar function classes on $\R$ has been established.

The optimal trace formula \eqref{tfintro} for $n\ge 2$ with real-valued $\eta_n=\eta_{n,H,V}\in L^1\big(\R,\frac{dx}{(1+|x|)^{n+\epsilon}}\big)$, $\epsilon>0$, was established in \cite[Theorem 4.1]{vNS21} under the more restrictive assumption on the operators $V(H-i)^{-1}\in\S^n$. The respective trace formula holds, in particular, for all $(n+1)$-times continuously differentiable functions whose derivatives decay at infinity at the rate $f^{(k)}(x)=\O\left(|x|^{-k-\alpha}\right)$, $k=0,\ldots,n+1$, for some $\alpha>\frac12$. That class of functions determines $\eta_n$ uniquely up to a polynomial summand of degree at most $n-1$.
A variant of the trace formula was obtained in \cite[Theorem 3.8]{Yafaev07} for bounded rational functions of Schr\"{o}dinger operators with some rapidly decaying potentials. In that formula, the right-hand side of \eqref{tfintro} is replaced with $\int_\R f'(x)\eta_n(x)\,dx$ and $n$ depends on the dimension of the underlying Euclidean space.
\bigskip

Our main result has the following advantages over the results mentioned above:

\begin{enumerate}[(i)]
\item significant simplification of the earlier trace formulas under the general condition \eqref{rcc} for $m\ge 2$;
\item real-valuedness and uniqueness of the spectral shift function $\eta_n$ up to a polynomial summand of degree at most $n-1$;
\item description of the set of admissible functions in terms of familiar function spaces, which include bounded rational functions of sufficient decay and smooth compactly supported functions;
%enlarges the set of admissible functions in terms of familiar function spaces.
\item a relaxed assumption on $H$ and $V$, which is applicable to Schr\"odinger operators, Dirac operators, and generalized Dirac operators in noncommutative geometry.
\end{enumerate}

\section{Preliminaries}

In the sequel we fix a separable Hilbert space $\H$. Let $\mB(\H)$ denote the C*-algebra of bounded operators on $\H$, $\mB(\H)_\text{sa}$ its subset of self-adjoint operators, and $\norm{\cdot}$ the operator norm. We denote the Schatten $p$-class by $\S^p$ and its norm by $\nrm{\cdot}{p}$ for $p\in[1,\infty)$. For $p=\infty$ we use the convention that $\S^\infty=\mB(\H)$ and $\nrm{\cdot}{\infty}=\norm{\cdot}$.

We briefly write ``$H$ is self-adjoint in $\H$" when $H$ is a self-adjoint (possibly unbounded) operator densely defined in $\H$. We denote by $E_H$ the spectral measure of a self-adjoint $H$.

We denote $\N=\{1,2,\ldots\}$. We denote the cardinality of a set $A$ by $|A|$, a constant depending on parameters $a,b$ by $c_{a,b}$, and a constant depending on a tuple of parameters $(\alpha_1,\dots,\alpha_n)$ by $c_\alpha$.
The indices of a list $B_1,\ldots,B_k$ run upward if possible and the list is void if that is not possible. For instance, $(B_1,\ldots,B_k)$ stands for $(B_1)$ if $k=1$ and stands for $()$ if $k=0$.

We denote $i=\sqrt{-1}$ and
$$u(x):=x-i,$$
for all $x\in\R$, and write $u^k(x):=(u(x))^k$ for $k\in\Z$.

\paragraph{Function classes.}
Let $X$ be a metric space. We denote by $C(X)$ the space of functions continuous on $X$ and by $C_0(X),C_c(X),C_b(X),C^n(X)$, respectively, its subspaces of functions that are vanishing at infinity, compactly supported, bounded, $n$-times continuously differentiable. Let $C^n_c(X)=C^n(X)\cap C_c(X)$. Let $L^1(X)$ denote the space of Lebesgue integrable functions with associated norm denoted $\absnorm{\cdot}$, and $L^1_\loc(\R)$ the locally integrable functions on $\R$ with associated seminorms $f\mapsto\int_{-a}^a|f|$ for all $a>0$. When $X=\R$, the dependence on $X$ is sometimes suppressed in the notation.
Let $C^n_0(\R)$ denote the subset of $C^n$ of such $f$ for which $f^{(n)}\in C_0(\R)$.
We denote by $C_c^n(a,b)$ the functions in $C_c^n(\R)$ that are zero outside the interval $(a,b)$.

We define the Fourier transform of $f\in L^1(\R^n)$ by
	$$\hat{f}(x)=\int_{\R^n}f(y)e^{-iyx}\frac{dy}{(2\pi)^n}.$$
Whenever we state ``$\hat{f}\in L^1(\R)$'' for a continuous function $f$, we mean that $f$ is the (inverse) Fourier transform of a function in $L^1(\R)$, which, in particular, implies that $f\in C_0(\R)$ by the Riemann--Lebesgue lemma.

\begin{lem}\label{lem:tiny lem}
Let $n,k\in\N$, $a>0$, and $f\in C^n_c(-a,a)$. There exists a constant $c_{n,k,a}$ such that
	$$\supnorm{(fu^k)^{(p)}}\leq c_{n,k,a}\supnorm{f^{(n)}}$$
for every $p=0,\dots,n$.
\end{lem}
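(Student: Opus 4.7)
The plan is a direct Leibniz expansion combined with the fact that compact support lets us control lower-order derivatives of $f$ by the top-order derivative $f^{(n)}$.

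First I would write, via the Leibniz rule,
\begin{equation*}
(fu^k)^{(p)} = \sum_{j=0}^{p} \binom{p}{j} f^{(p-j)} (u^k)^{(j)}.
\end{equation*}
Since $u^k$ is a polynomial of degree $k$ in $x$, its $j$-th derivative is a polynomial of degree at most $k$, hence bounded on the compact interval $[-a,a]$ by a constant $c'_{k,j,a}$. Because $f$ is supported in $(-a,a)$, it suffices to estimate the supremum of $(fu^k)^{(p)}$ on $[-a,a]$, where each factor $(u^k)^{(j)}$ is uniformly bounded.

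Second, I would reduce every lower-order sup-norm $\|f^{(p-j)}\|_\infty$ to $\|f^{(n)}\|_\infty$. Since $f\in C^n_c(-a,a)$, all derivatives $f^{(m)}$ for $m=0,\dots,n-1$ vanish at $-a$ (indeed, outside $(-a,a)$), so for any $m<n$ and any $x$,
\begin{equation*}
f^{(m)}(x) = \int_{-a}^{x} f^{(m+1)}(t)\,dt,
\end{equation*}
giving $\|f^{(m)}\|_\infty \le 2a\,\|f^{(m+1)}\|_\infty$. Iterating $n-m$ times yields $\|f^{(m)}\|_\infty \le (2a)^{n-m}\|f^{(n)}\|_\infty$, valid for all $0\le m\le n$.

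Combining the two steps, for every $p\le n$,
\begin{equation*}
\|(fu^k)^{(p)}\|_\infty \le \sum_{j=0}^{p}\binom{p}{j}\, c'_{k,j,a}\,(2a)^{n-(p-j)}\,\|f^{(n)}\|_\infty,
\end{equation*}
and taking $c_{n,k,a}$ to be the maximum over $p=0,\dots,n$ of the finite sum in front of $\|f^{(n)}\|_\infty$ gives the claim. There is no real obstacle here; the only point to keep in mind is that the estimate $\|f^{(m)}\|_\infty \le (2a)^{n-m}\|f^{(n)}\|_\infty$ relies on the compact-support assumption providing a vanishing boundary value, which is why the constant depends on $a$.
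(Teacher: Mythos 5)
Your proof is correct and follows essentially the same route as the paper's: a Leibniz expansion of $(fu^k)^{(p)}$, boundedness of $u^k$ and its derivatives on $[-a,a]$, and the compact-support/mean-value observation that lets $\supnorm{f^{(m)}}$ be controlled by $(2a)^{n-m}\supnorm{f^{(n)}}$. No issues.
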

\begin{proof}
The lemma follows from the higher-order product rule for derivatives (Leibniz rule), the bound
$|f(x)|\le 2a\supnorm{f'}$ (by the mean value theorem)
and the fact that $|u(x)|\leq \sqrt{1+a^2}$ for all $x\in (-a,a)$.
\end{proof}

The following class of functions was introduced in \cite{vNvS21a}:
$$\W_{k}^{n}:=\{f\in C^n:~\widehat{(fu^l)^{(m)}}\in L^1\text{ for all $l=0,\ldots,k$ and $m=0,\ldots,n$}\}.$$
\begin{exmp}
\label{wex}
Examples of functions in $\W_k^n$ are functions in $C_c^{n+1}$ and Schwartz functions. Moreover, the function $x\mapsto \frac{1}{x-\lambda_1}\cdots\frac{1}{x-\lambda_{k+1}}$ is in $\W^n_k$ for any $\lambda_1,\ldots,\lambda_{k+1}\in\C\setminus\R$, and hence, any bounded rational function of order of decay $\O(|x|^{-k-1})$ is in $\W^n_k$.
\end{exmp}

We have the following useful properties of functions in $\W_{k}^n$.
\begin{prop}\label{prop:Wsn}
Let $k,n\in\N\cup\{0\}$. Then the following holds for all $f\in\W_k^n$.
\begin{enumerate}[(i)]
\item\label{prop:Wsn(i)} $f^{(m)}u^l\in C_0$ for all $m=0,\ldots,n$ and $l=0,\ldots,k$.
\item\label{prop:Wsn(ii)} $f^{(m)}u^l\in L^1$ for all $m=0,\ldots,n$ and $l=0,\ldots,k-2$.
\end{enumerate}
\end{prop}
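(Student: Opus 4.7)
The plan is to unpack the definition of $\W_k^n$ using the convention stated just above the proposition: writing $\widehat{(fu^l)^{(m)}} \in L^1$ means $(fu^l)^{(m)} \in C_0$ by Riemann--Lebesgue. So the hypothesis directly gives $(fu^l)^{(m)} \in C_0$ for every $l \le k$ and $m \le n$. Part (i) will then come from a Leibniz-rule rearrangement together with induction on $l$, and part (ii) will be a one-line consequence of (i) combined with integrability of $1/(1+x^2)$.

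For part (i), I would expand $(fu^l)^{(m)}$ by the Leibniz rule. Since $u(x) = x - i$, the derivative $(u^l)^{(j)}$ equals $\frac{l!}{(l-j)!}\, u^{l-j}$ for $j \le l$ and vanishes otherwise. Isolating the top-order term gives
\begin{equation*}
f^{(m)} u^l = (fu^l)^{(m)} - \sum_{j=1}^{\min(m,l)} \binom{m}{j}\frac{l!}{(l-j)!}\, f^{(m-j)} u^{l-j}.
\end{equation*}
The first term lies in $C_0$ by the definition of $\W_k^n$. I then induct on $l$: the base case $l=0$ is immediate, and for $l \ge 1$ every term in the sum has $u$-exponent strictly less than $l$, so the inductive hypothesis (applied with the same upper bound $m \le n$) puts each of them in $C_0$. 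Hence $f^{(m)} u^l \in C_0$ for all admissible $l, m$.

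For part (ii), given $l \le k-2$ and $m \le n$, part (i) applied with exponent $l+2 \le k$ yields $f^{(m)} u^{l+2} \in C_0 \subset C_b$. Then
\begin{equation*}
\int_\R |f^{(m)}(x)\, u^l(x)|\, dx = \int_\R \frac{|f^{(m)}(x)\, u^{l+2}(x)|}{|u(x)|^2}\, dx \le \bigl\|f^{(m)} u^{l+2}\bigr\|_\infty \int_\R \frac{dx}{1+x^2} < \infty,
\end{equation*}
using $|u(x)|^2 = 1 + x^2$. This finishes (ii).

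I do not expect any real obstacle here; the only subtlety is keeping the induction in (i) properly organized so that the Leibniz remainder terms all have smaller $u$-exponent than the left-hand side. Once that is arranged, both parts reduce to the two standing facts that Fourier transforms of $L^1$ functions vanish at infinity and that $(1+x^2)^{-1}$ is integrable.
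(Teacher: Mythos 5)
Your proof is correct and follows essentially the same route as the paper: part (i) via the Riemann--Lebesgue convention, the Leibniz rule, and an induction (the paper inducts on $m$ where you induct on $l$, a purely cosmetic difference), and part (ii) via the identity $f^{(m)}u^l = f^{(m)}u^{l+2}u^{-2}$ with $|u|^{-2}=(1+x^2)^{-1}\in L^1$. No gaps.
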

\begin{proof}
By the remark on the Fourier transform made above, $(fu^l)^{(m)}\in C_0$ for all $m=0,\ldots,n$ and $l=0,\ldots,k$. In particular, $fu^l\in C_0$ for all $l=0,\ldots,k$. By induction on $m=0,\ldots,n$ and the higher-order Leibniz rule, we obtain \eqref{prop:Wsn(i)}.
The property (ii) follows from (i) and the identity $f^{(m)}u^l=f^{(m)}u^{l+2}u^{-2}$.
\end{proof}

Let $f^{[n]}$ denote the $n$th divided difference of $f\in C^n$, that is, $f^{[0]}:=f$ and
$$f^{[n]}(\lambda_0,\ldots,\lambda_n):=\lim_{\lambda\to\lambda_0}\frac{f^{[n-1]}(\lambda,\lambda_2,\ldots,\lambda_{n})-f^{[n-1]}(\lambda_1,\lambda_2,\ldots,\lambda_{n})}{\lambda-\lambda_1}.$$

\paragraph{Measures and partial integration}
\begin{lem}\label{lem:partial integration}
Let $n,m\in\N\cup\{0\}$, and let $\mu$ be a finite complex Radon measure on $\R$. For every $\epsilon\in(0,1]$ and every $k\in\N\cup\{0\}$ there exists a complex Radon measure $\tilde\mu_{k,\epsilon}$ with $$\norm{\tilde\mu_{k,\epsilon}}\leq\absnorm{u^{-1-\epsilon}}
\norm{\mu}$$ and
\begin{align}
\label{lsint}
\int g^{(n)}u^m \,d\mu=\int g^{(n+k)}u^{m+k+\epsilon}d\tilde\mu_{k,\epsilon}
\end{align}
for all $g\in C^{n+k}$ satisfying $g^{(n+l)}u^{m+l}\in C_0$, $l=0,\ldots,k-1$ and $g^{(n+k)}u^{m+k-1}\in L^1$, in particular, for all $g\in\W^{n+k}_{m+k+1}$.
\end{lem}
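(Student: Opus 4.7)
The plan is to proceed by induction on $k\ge 0$.

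For the base case $k=0$, I set $d\tilde\mu_{0,\epsilon} := u^{-\epsilon}\,d\mu$. Since $|u^{-\epsilon}(x)|=(1+x^2)^{-\epsilon/2}\le 1$, this defines a finite complex Radon measure with $\norm{\tilde\mu_{0,\epsilon}}\le\norm{\mu}$, and a direct estimate gives $\absnorm{u^{-1-\epsilon}}\ge 1$ for $\epsilon\in(0,1]$ (e.g.\ by integrating $(1+x^2)^{-(1+\epsilon)/2}$ over $[-1,1]$), so the claimed bound holds; the identity $\int g^{(n)}u^{m+\epsilon}\,d\tilde\mu_{0,\epsilon}=\int g^{(n)}u^m\,d\mu$ follows from $u^{m+\epsilon}\cdot u^{-\epsilon}=u^m$.

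For the inductive step $k\to k+1$, the workhorse is the Lebesgue--Stieltjes integration-by-parts formula. For any finite complex Radon measure $\nu$ on $\R$, set $F_\nu(x):=\nu((-\infty,x])$ (so $\supnorm{F_\nu}\le\norm{\nu}$). For $h\in C^1\cap C_0$ with $h'F_\nu\in L^1$,
\[
\int h\,d\nu=-\int h'(x)\,F_\nu(x)\,dx=\int h'(x)\,u(x)^{1+\epsilon}\,d\nu^\sharp(x),
\]
where $d\nu^\sharp:=-F_\nu\,u^{-1-\epsilon}\,dx$ is absolutely continuous with $\norm{\nu^\sharp}\le\absnorm{u^{-1-\epsilon}}\norm{\nu}$. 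I apply this with $\nu=\tilde\mu_{k,\epsilon}$ and $h=g^{(n+k)}u^{m+k+\epsilon}$; the required $C_0$ and $L^1$ properties of $h$ and $h'F_{\tilde\mu_{k,\epsilon}}$ follow from the hypotheses on $g$, with Proposition~\ref{prop:Wsn} supplying them automatically when $g\in\W^{n+k}_{m+k+1}$. Expanding $h'$ by the Leibniz rule yields a main term $\int g^{(n+k+1)}u^{m+k+1+\epsilon}\,d\tilde\mu_{k,\epsilon}^\sharp$ of the target form, plus a remainder $(m+k+\epsilon)\int g^{(n+k)}u^{m+k+\epsilon}\,d\tilde\mu_{k,\epsilon}^\sharp$ that is of the same form as the level-$k$ identity with $\mu$ replaced by $\tilde\mu_{k,\epsilon}^\sharp$. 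Iterating the integration by parts on this remainder and combining all contributions produces a single absolutely continuous measure $\tilde\mu_{k+1,\epsilon}$; the pointwise inequality $|u|^{-j-\epsilon}\le|u|^{-1-\epsilon}$ (valid for $j\ge 1$ since $|u|\ge 1$) collapses the accumulated weights in the density into a single factor $|u|^{-1-\epsilon}$, which is what allows the total-variation bound to remain $\absnorm{u^{-1-\epsilon}}\norm{\mu}$.

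The main obstacle I foresee is the bookkeeping needed to simultaneously verify (i) the vanishing of boundary terms at each integration-by-parts step, ensured by the decay hypotheses $g^{(n+l)}u^{m+l}\in C_0$ for $l\le k-1$ together with the integrability condition $g^{(n+k)}u^{m+k-1}\in L^1$ (both supplied by Proposition~\ref{prop:Wsn} when $g\in\W^{n+k}_{m+k+1}$), and (ii) the precise recombination of the main and remainder contributions so that the total-variation bound is controlled by a single $\absnorm{u^{-1-\epsilon}}$ factor rather than a compounding power $\absnorm{u^{-1-\epsilon}}^{k+1}$. The hypotheses on $g$ are tailored exactly to make each intermediate integration by parts and Fubini exchange in this inductive scheme permissible.
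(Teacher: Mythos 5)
Your base case is fine, but the inductive step has a genuine gap, located exactly at the point you flag as "bookkeeping." After one Lebesgue--Stieltjes integration by parts the main term is $-\int g^{(n+k+1)}u^{m+k+\epsilon}F_\nu\,dx$ with $F_\nu$ bounded but \emph{not} decaying at $+\infty$ (it tends to $\nu(\R)$). To cast this in the target form $\int g^{(n+k+1)}u^{m+k+1+\epsilon}\,d\rho$ you are forced to take $d\rho=-u^{-1}F_\nu\,dx$, whose density is only $O(|u|^{-1})$ and hence is not a finite measure; your alternative normalization $d\nu^\sharp=-F_\nu u^{-1-\epsilon}\,dx$ \emph{is} finite but then the main term reads $\int g^{(n+k+1)}u^{m+k+1+2\epsilon}\,d\nu^\sharp$, with weight $u^{m+k+1+2\epsilon}$ rather than $u^{m+k+1+\epsilon}$, so the induction hypothesis at level $k+1$ is not recovered. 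The same off-by-$\epsilon$ mismatch afflicts the remainder term. The pointwise inequality $|u|^{-j-\epsilon}\le|u|^{-1-\epsilon}$ cannot rescue this: the obstruction is that the intermediate density is only $O(|u|^{-1})$, i.e.\ it is \emph{missing} the $\epsilon$ of decay, not carrying too much of it. Patching the exponents by splitting $\epsilon$ over the $k$ steps forces a compounded bound $\prod_j\absnorm{u^{-1-\epsilon_j}}\,\norm{\mu}$, with each factor at least $1$ (and of order $1/\epsilon_j$), which is strictly worse than the claimed $\absnorm{u^{-1-\epsilon}}\norm{\mu}$.

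The paper's proof sidesteps this entirely by spending the $\epsilon$ only once, at the very end. It absorbs the weight into the measure at the first step, setting $\xi_1(x)=\int_0^x u^m\,d\mu$, which satisfies the growth bound $|\xi_1(x)|\le|u(x)|^m\norm{\mu}$, and then takes $k-1$ further plain antiderivatives $\xi_l$, so no Leibniz remainders ever appear and $|\xi_k(x)|\le|u(x)|^{m+k-1}\norm{\mu}$ --- one power \emph{less} than the final weight $u^{m+k+\epsilon}$. The density $u^{-m-k-\epsilon}\xi_k$ of $\tilde\mu_{k,\epsilon}$ is then $O(|u|^{-1-\epsilon})$, integrable with exactly the stated bound. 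If you want to keep an inductive integration-by-parts scheme, you must likewise refrain from reintroducing the weight at each stage; reintroducing $u^{m+k+\epsilon}$ into the integrand at level $k$, as you do, is what destroys the extra power of decay that the final step needs.
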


\begin{proof}
We shall assume that $k\geq1$ since the $k=0$ case is trivial. Define
$$\xi_1(x):=\int_0^x u^m(y)\,d\mu(y)$$
We have
$$\|{u^{-m}\xi_1}\|_\infty\leq \sup_{x\in\R}\big(|u^{-m}(x)|\sup_{|y|\leq|x|}|u^m(y)|\norm{\mu}\big)
=\sup_{x\in\R}\big(|u^{-m}(x)|\cdot|u^m(x)|\cdot\norm{\mu}\big)=\norm{\mu}.$$
By partial integration for all $a>0$,
\begin{align*}
\int_{-a}^a g^{(n)}u^m\,d\mu &=\big[g^{(n)}u^m{u^{-m}\xi_1}\big]_{-a}^a-\int_{-a}^a g^{(n+1)}(x){\xi_1}(x)dx.
\end{align*}
Since $g^{(n)}u^m\in C_0$, $g^{(n+1)}u^m\in L^1(\R)$, and $u^{-m}\xi_1\in C_b$, taking the limit above as $a\to\infty$ yields
\begin{align*}
\int g^{(n)}u^m\,d\mu &=-\int g^{(n+1)}(x)\xi_1(x)dx.
\end{align*}
We define, recursively, $\xi_2(x):=\int_0^x\xi_1(y)\,dy,\;\ldots,\;\xi_k(x):=\int_0^x\xi_{k-1}(y)\,dy$.
By an argument similar to the one above,
$$\supnorm{u^{-m-k+1}\xi_k}\leq\supnorm{u^{-m-k+2}\xi_{k-1}}\leq\cdots
\leq\supnorm{u^{-m}\xi_1}\leq\norm{\mu}$$ and
\begin{align*}
\int g^{(n)}u^m\,d\mu=(-1)^k\int g^{(n+k)}(x)\xi_k(x)dx
\end{align*}
for $g$ satisfying $g^{(n+l)}u^{m+l}\in C_0$, $l=0,\ldots,k-1$ and $g^{(n+k)}\in L^1$.
Thus, \eqref{lsint} holds with
$$d\tilde\mu_{k,\epsilon}(x)=(-1)^ku^{-m-k-\epsilon}(x)\xi_k(x) dx,$$ where $\norm{\tilde\mu_{k,\epsilon}}=\absnorm{u^{-1-\epsilon}u^{-m-k+1}\xi_k} \leq\absnorm{u^{-1-\epsilon}}\norm{\mu}$.
\end{proof}

\paragraph{Schatten classes and strong operator topology.}
Although the following lemma is known, we provide a short proof for convenience of the reader.
\begin{lem}\label{lem:sequential continuity}
Define the topological spaces $\L^p=(\S^p,\nrm{\cdot}{p})$ for every $p\in[1,\infty)$ and $\L^\infty=(\mB(\H),\textnormal{so*})$. Let $n\in\N$ and let $\alpha,\alpha_1,\ldots,\alpha_n\in[1,\infty]$ be such that $\frac{1}{\alpha_1}+\ldots+\frac{1}{\alpha_n}=\frac{1}{\alpha}$. Then, the function
\begin{align*}
(A_1,\ldots,A_n)&\mapsto A_1\cdots A_n
\end{align*}
is a sequentially continuous map from $\L^{\alpha_1}\times\cdots\times\L^{\alpha_n}$ to $\L^{\alpha}$.
\end{lem}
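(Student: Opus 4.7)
I plan to induct on $n$. The base case $n=1$ is immediate. For the inductive step, group $A_1\cdots A_n = (A_1\cdots A_{n-1})A_n$ and define $\beta\in[1,\infty]$ by $\frac{1}{\beta}:=\sum_{i=1}^{n-1}\frac{1}{\alpha_i}$; the induction hypothesis yields sequential continuity of $(A_1,\ldots,A_{n-1})\mapsto A_1\cdots A_{n-1}$ into $\L^\beta$, so it suffices to prove the bilinear case, namely continuity of $(A,B)\mapsto AB$ from $\L^\beta\times\L^{\alpha_n}\to\L^\alpha$ with $\frac{1}{\beta}+\frac{1}{\alpha_n}=\frac{1}{\alpha}$.

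For the bilinear case, I would use the decomposition $A_kB_k - AB = (A_k-A)B_k + A(B_k-B)$. When both $\beta,\alpha_n<\infty$, the conclusion is immediate from H\"older's inequality for Schatten norms together with the boundedness of the convergent sequence $\{\|B_k\|_{\alpha_n}\}_k$. When $\beta=\alpha_n=\infty$ (so $\alpha=\infty$ as well), so*-convergence of $A_kB_k$ to $AB$ is verified pointwise on vectors via the same decomposition, using the uniform boundedness principle to secure $\sup_k\|A_k\|<\infty$; the adjoint half follows symmetrically, since so*-convergence is preserved under $(\cdot)^*$.

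The essential work is the mixed case, say $\beta=\infty$ and $p:=\alpha_n<\infty$ (the opposite mixed case being analogous). I would first establish the auxiliary statement: if $X_k\to X$ in so* and $C\in\S^p$ with $p<\infty$, then both $X_kC\to XC$ and $CX_k\to CX$ hold in $\|\cdot\|_p$. For $X_kC\to XC$, fix $\varepsilon>0$ and approximate $C$ in $\|\cdot\|_p$ by a finite-rank operator $C_\varepsilon = \sum_{j=1}^N |\xi_j\rangle\langle\eta_j|$; then $\|(X_k-X)C_\varepsilon\|_p \le \sum_{j=1}^N\|(X_k-X)\xi_j\|\,\|\eta_j\|\to 0$ by strong convergence, while $\|(X_k-X)(C-C_\varepsilon)\|_p \le (\sup_k\|X_k\|+\|X\|)\,\varepsilon$ by uniform boundedness, so letting $k\to\infty$ and then $\varepsilon\to 0$ settles this half. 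The convergence $CX_k\to CX$ then follows from the first half applied to $X_k^*\to X^*$ and $C^*\in\S^p$, using $\|Y\|_p=\|Y^*\|_p$; this step is precisely where strong convergence of the adjoints, i.e.\ full so*-convergence rather than merely so, is required. Plugging this auxiliary fact into the decomposition of $A_kB_k-AB$, with $\|A_k(B_k-B)\|_p\le\|A_k\|\,\|B_k-B\|_p$ controlled via $\sup_k\|A_k\|<\infty$, closes the bilinear case. I expect the main obstacle to be exactly this mixed case: the left-right asymmetry of multiplication is what forces the stronger so*-topology on $\L^\infty$, and the finite-rank approximation must be carried out on the Schatten side while strong (resp.\ strong-adjoint) convergence is exploited on the bounded side.
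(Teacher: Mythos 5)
Your proof is correct and follows essentially the same route as the paper's: reduce to the bilinear case by induction, use H\"older's inequality when both exponents are finite, handle the mixed case by finite-rank approximation on the Schatten side together with uniform boundedness and an $\epsilon/2$-argument, obtain the opposite mixed case by taking adjoints, and use joint strong continuity of multiplication on bounded sets when both exponents are infinite. Your version simply spells out the details that the paper leaves implicit.
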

\begin{proof}
By using induction it suffices to prove the lemma for $n=2$. That is, it suffices to prove that the multiplication map from $\L^p\times\L^q$ to $\L^r$ is sequentially continuous for $p,q,r\in [1,\infty]$ satisfying $\frac{1}{p}+\frac{1}{q}=\frac{1}{r}$.

If $p,q<\infty$, then H\"older's inequality for noncommutative $L^p$-spaces (cf. \cite[Theorem 4.2]{FK}) implies continuity of the multiplication map $\L^p\times\L^q\to\L^r$.

Suppose that $p=\infty$, $q<\infty$. If $b\in\S^q$, then the map $\L^\infty\to\L^q$, $a\mapsto ab$ is continuous by approximating $b$ by rank-one operators. Joint continuity of the multiplication map $\L^\infty\times\L^q\to\L^q$ follows by an $\epsilon/2$-argument, because a converging sequence in $\L^\infty$ is bounded.

By taking adjoints, continuity of the multiplication map $\L^p\times\L^\infty\to\S^p$ follows for $p<\infty$.

If $p=q=\infty$, then the claim follows from the fact that multiplication on the unit ball of $\mB(\H)$ is jointly strongly continuous.
\end{proof}

\paragraph{Multiple operator integrals.}

We use the following definition of the multilinear operator integral, introduced in \cite{PSS13} and extending the definitions from \cite{azamov09} and \cite{Peller} (cf. \cite[Definition 4.3.3]{ST19}).

\begin{defn}
\label{nosep}
Let $n\in\N$, let $H_0,\dots,H_n$ be self-adjoint in $\H$, let $\phi:\R^{n+1}\to\C$ be bounded Borel, and let $\alpha,\alpha_1,\ldots,\alpha_n\in [1,\infty]$ satisfy $\tfrac{1}{\alpha}=\tfrac{1}{\alpha_1}+\ldots+\tfrac{1}{\alpha_n}$.
Denote $E^j_{l,m}:=E_{H_j}\big(\big[\frac{l}{m},\frac{l+1}{m}\big)\big)$.
If for all $V_j\in\S^{\alpha_j}$, $j=1,\ldots,n$,
the iterated limit
$$T^{H_0,\ldots,H_n}_{\phi}(V_1,\ldots,V_n)
:=\lim_{m\to\infty}\lim_{N\to\infty}\sum_{|l_0|,\ldots,|l_n|<N}
\phi\left(\frac{l_0}{m},\ldots,\frac{l_n}{m}\right)E^0_{l_0,m}V_1E^1_{l_1,m}\cdots V_nE^n_{l_n,m}\,$$
exists in $\S^\alpha$, then the transformation $T^{H_0,\ldots,H_n}_{\phi}:\S^{\alpha_1}\times\cdots\times\S^{\alpha_n}\to\S^\alpha$, which is a bounded operator by the uniform boundedness principle, is called a multilinear operator integral (MOI). If this is the case, we write $T_\phi^{H_0,\ldots,H_n}\in\mB(\S^{\alpha_1}\times\cdots\times\S^{\alpha_n},\S^\alpha)$.
\end{defn}

We will also consider a degenerate multilinear operator integral $T^{H_0}_{\phi}()=\phi(H_0)$.

The following theorem, proven in \cite[Lemma 3.5]{PSS13}, shows that, if the symbol $\phi$ admits a separation of variables, the multiple operator integral coincides with the one from \cite{azamov09,Peller} and admits a strong bound.

\begin{thm}\label{thm:iptp}
Let $n\in\N$ and let $H_0,\ldots,H_n$ be self-adjoint in $\H$, and let $(\Omega,\nu)$ be a finite measure space. Suppose that $a_j(\cdot,s):\R\to\C,\quad s\in\Omega,$ are bounded continuous functions and that there is a sequence $\{\Omega_k\}_{k=1}^\infty$ of growing measurable subsets
of $\Omega$ such that $\Omega=\cup_{k=1}^\infty\Omega_k$ and the families
$$\{a_j(\cdot,s)\}_{s\in\Omega_k},\quad j=0,\dots,n$$
are uniformly bounded and uniformly equicontinuous. Define $\phi:\R^{n+1}\to\C$ by
\begin{align}\label{phi representation}
	\phi(\lambda_0,\ldots,\lambda_n)=\int_\Omega a_0(\lambda_0,s)\cdots a_n(\lambda_n,s)\,d\nu(s).
\end{align}
Then we have
$T^{H_0,\ldots,H_n}_\phi\in\mB( \S^{\alpha_1}\times\cdots\times\S^{\alpha_n},\S^\alpha)$ for all $\alpha,\alpha_j\in[1,\infty]$ with $\tfrac{1}{\alpha_1}+\ldots+\tfrac{1}{\alpha_n}=\tfrac{1}{\alpha}$, as well as
\begin{align*}
T^{H_0,\ldots,H_n}_\phi(V_1,\ldots,V_n)(y)=\int_\Omega a_0(H_0,s)V_1 a_1(H_1,s)\cdots V_n a_n(H_n,s)y\,d\nu(s),\quad y\in\H,
\end{align*}
and
\begin{align*}
\big\|T_{\phi}^{H_0,\dots,H_n}\big\|_{\S^{\alpha_1}\times\cdots\times\S^{\alpha_n}\to\S^\alpha}
\le \int_\Omega\prod_{j=0}^n\|a_j(\cdot,s)\|_\infty\,d|\nu|(s).
\end{align*}
\end{thm}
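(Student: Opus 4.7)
The plan is to interpret the proposed right-hand side as a Bochner integral in $\S^\alpha$ and then identify it with the iterated limit of Definition \ref{nosep} via two successive applications of dominated convergence. For each $s\in\Omega$, the operator $B(s):=a_0(H_0,s)V_1a_1(H_1,s)\cdots V_na_n(H_n,s)$ is defined by bounded Borel functional calculus, and the noncommutative H\"older inequality gives
$$\nrm{B(s)}{\alpha}\leq\prod_{j=0}^n\nrm{a_j(\cdot,s)}{\infty}\prod_{i=1}^n\nrm{V_i}{\alpha_i}.$$
Using the growing subsets $\Omega_k$ with their uniformly bounded and equicontinuous families $\{a_j(\cdot,s)\}_{s\in\Omega_k}$, one shows that each $s\mapsto a_j(H_j,s)$ is measurable into $\mB(\H)$ endowed with SOT$^*$, and Lemma \ref{lem:sequential continuity} then yields that $s\mapsto B(s)$ is strongly measurable into the separable Banach space $\S^\alpha$; combined with the pointwise bound above, $B$ is Bochner integrable (the theorem being vacuous when the right-hand integral is infinite).

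Next, fix $m,N\in\N$ and insert the representation \eqref{phi representation} into the finite sum
$$S_{m,N}:=\sum_{|l_0|,\ldots,|l_n|<N}\phi(l_0/m,\ldots,l_n/m)E^0_{l_0,m}V_1E^1_{l_1,m}\cdots V_nE^n_{l_n,m}.$$
Since only finitely many terms appear, Fubini delivers
$$S_{m,N}=\int_\Omega a_0^{m,N}(H_0,s)V_1a_1^{m,N}(H_1,s)\cdots V_na_n^{m,N}(H_n,s)\,d\nu(s),$$
where $a_j^{m,N}(\lambda,s):=\sum_{|l|<N}a_j(l/m,s)\chi_{[l/m,(l+1)/m)}(\lambda)$. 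As $N\to\infty$ the symbols $a_j^{m,N}(\cdot,s)$ converge pointwise and boundedly to $a_j^{m}(\lambda,s):=a_j(\lfloor\lambda m\rfloor/m,s)$, hence $a_j^{m,N}(H_j,s)\to a_j^{m}(H_j,s)$ in SOT$^*$ by bounded convergence in the spectral theorem. Lemma \ref{lem:sequential continuity} upgrades this to $\S^\alpha$-convergence of the integrand, and dominated convergence for $\S^\alpha$-valued Bochner integrals with dominant $s\mapsto\prod_j\nrm{a_j(\cdot,s)}{\infty}\prod_i\nrm{V_i}{\alpha_i}$ gives $\lim_N S_{m,N}=\int_\Omega a_0^m(H_0,s)V_1\cdots V_na_n^m(H_n,s)\,d\nu(s)$.

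Finally, as $m\to\infty$ the uniform equicontinuity on each $\Omega_k$ gives $\sup_\lambda|a_j^m(\lambda,s)-a_j(\lambda,s)|\to 0$ uniformly for $s\in\Omega_k$, so $a_j^m(H_j,s)\to a_j(H_j,s)$ in operator norm for every $s$; another application of Lemma \ref{lem:sequential continuity} together with dominated convergence identifies the iterated limit with $\int_\Omega B(s)\,d\nu(s)$, and integrating the pointwise H\"older bound over $\Omega$ produces the asserted ideal-norm estimate. The main obstacle is the $N\to\infty$ step, where the extremal factors $a_j^{m,N}(H_j,s)$ converge only in SOT$^*$ and never in norm; pushing this through the multilinear product in $\S^\alpha$ depends crucially on Lemma \ref{lem:sequential continuity}, and then commuting the limit with the Bochner integral over $\Omega$ requires a domination valid on the whole measure space rather than only on each $\Omega_k$.
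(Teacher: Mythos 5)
The paper does not actually prove this theorem: it is imported from \cite[Lemma 3.5]{PSS13} (going back to the class $\mathfrak{A}_n$ of Azamov--Carey--Dodds--Sukochev), so there is no in-paper argument to compare against. Your reconstruction is essentially the standard proof of that cited result, and its architecture is sound: substitute the separated representation of $\phi$ into the finite discretized sums and apply Fubini, identify the summed spectral projections with the functional calculus of the step functions $a_j^{m,N}$, pass to the limit in $N$ via bounded pointwise convergence (hence SOT* convergence of the extremal factors) upgraded to $\S^\alpha$-convergence of the product by Lemma \ref{lem:sequential continuity}, then pass to the limit in $m$ via the uniform equicontinuity on each $\Omega_k$ (which gives operator-norm convergence $a_j^m(H_j,s)\to a_j(H_j,s)$ for every $s$), with dominated convergence for the Bochner integral at both stages; the norm bound then follows from the pointwise H\"older estimate. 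Two caveats are worth recording. First, finiteness of $\int_\Omega\prod_{j}\supnorm{a_j(\cdot,s)}\,d|\nu|(s)$ is genuinely an implicit hypothesis (it is built into the definition of the symbol class in the cited sources): without it $\phi$ itself need not be well defined and your dominated-convergence steps have no dominant; also, your parenthetical that the theorem is then ``vacuous'' is not quite right, since the membership $T_\phi^{H_0,\dots,H_n}\in\mB(\S^{\alpha_1}\times\cdots\times\S^{\alpha_n},\S^\alpha)$ would remain a nontrivial assertion. Second, when $\alpha=\infty$ (equivalently, all $\alpha_j=\infty$) the space $\S^\alpha=\mB(\H)$ is not separable, so the phrase ``strongly measurable into the separable Banach space $\S^\alpha$'' fails and the Bochner-integral formulation must be replaced by the strong, pointwise-in-$y$ integral that appears in the statement; the same limiting arguments then go through vector by vector. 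With those adjustments the argument is complete.
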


The following property of the multiple operator integral is a consequence of \cite[Lemmas 3.5, 5.1 and 5.2]{PSS13} (see also \cite[Theorem 3.9]{vNS21} and \cite[Lemma 4.6]{azamov09}).
\begin{thm}
\label{Salphamember}
Let $n\in\N$ and let $f\in C^n(\R)$ be such that $\widehat{f^{(n)}}\in L^1(\R)$.
Let $\alpha,\alpha_1,\dots,\alpha_n\in[1,\infty]$ be such that $\frac{1}{\alpha}=\frac{1}{\alpha_1}+\dots+\frac{1}{\alpha_n}$.
Let $H_0,\dots,H_{n}$ be self-adjoint operators in $\H$ and let $V_k\in\S^{\alpha_k}$, $k=1,\dots,n$.
Then $\phi=f^{[n]}$ satisfies the assumptions of Theorem \ref{thm:iptp}
and
\begin{align*}
\|T_{f^{[n]}}^{H_0,\dots,H_n}(V_1,\dots,V_n)\|_{\alpha}
\le \frac{1}{n!}\,\|\widehat{f^{(n)}}\|_1\|V_1\|_{\alpha_1}\dots\|V_n\|_{\alpha_n},
\end{align*}
in particular, $T_{f^{[n]}}^{H_0,\dots,H_n}(V_1,\dots,V_n)\in\S^\alpha$.
\end{thm}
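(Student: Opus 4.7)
The plan is to use the standard integral representation of divided differences combined with the Fourier inversion of $f^{(n)}$ to bring $f^{[n]}$ into the separation-of-variables form required by Theorem \ref{thm:iptp}. Concretely, since $f\in C^n$ and the divided difference admits the Hermite--Genocchi representation
\begin{align*}
f^{[n]}(\lambda_0,\ldots,\lambda_n)=\int_{\Sigma_n}f^{(n)}(s_0\lambda_0+\cdots+s_n\lambda_n)\,d\sigma(s),
\end{align*}
where $\Sigma_n=\{s\in[0,1]^{n+1}:\,\sum_j s_j=1\}$ carries the standard surface measure of total mass $1/n!$, and since $\widehat{f^{(n)}}\in L^1$ gives the Fourier inversion $f^{(n)}(x)=\int_\R \widehat{f^{(n)}}(t)e^{itx}\,dt$, I would substitute and use Fubini (justified by $\widehat{f^{(n)}}\in L^1$ and the compactness of $\Sigma_n$) to obtain
\begin{align*}
f^{[n]}(\lambda_0,\ldots,\lambda_n)=\int_{\R\times\Sigma_n}e^{its_0\lambda_0}\cdots e^{its_n\lambda_n}\,\widehat{f^{(n)}}(t)\,dt\,d\sigma(s).
\end{align*}

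Next I would set $\Omega=\R\times\Sigma_n$ with finite measure $d\nu(t,s)=\widehat{f^{(n)}}(t)\,dt\,d\sigma(s)$, and define $a_j(\lambda,(t,s))=e^{its_j\lambda}$. These are bounded continuous in $\lambda$ with $\|a_j(\cdot,(t,s))\|_\infty=1$. To verify the hypotheses of Theorem \ref{thm:iptp}, I would set $\Omega_k=[-k,k]\times\Sigma_n$, which exhausts $\Omega$, and note that for $(t,s)\in\Omega_k$ the functions $\lambda\mapsto e^{its_j\lambda}$ are Lipschitz with constant bounded by $|t|\,s_j\leq k$, hence uniformly bounded and uniformly equicontinuous on each $\Omega_k$.

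With these hypotheses in hand, Theorem \ref{thm:iptp} supplies $T^{H_0,\ldots,H_n}_{f^{[n]}}\in\mB(\S^{\alpha_1}\times\cdots\times\S^{\alpha_n},\S^\alpha)$ for all admissible exponents, and yields the norm bound
\begin{align*}
\big\|T^{H_0,\ldots,H_n}_{f^{[n]}}\big\|_{\S^{\alpha_1}\times\cdots\times\S^{\alpha_n}\to\S^\alpha}\leq\int_\Omega\prod_{j=0}^n\|a_j(\cdot,(t,s))\|_\infty\,d|\nu|(t,s)=\int_\R|\widehat{f^{(n)}}(t)|\,dt\cdot\int_{\Sigma_n}d\sigma(s)=\tfrac{1}{n!}\,\|\widehat{f^{(n)}}\|_1.
\end{align*}
Applied to the tuple $(V_1,\ldots,V_n)$, this gives the required Schatten estimate and, in particular, that $T^{H_0,\ldots,H_n}_{f^{[n]}}(V_1,\ldots,V_n)\in\S^\alpha$.

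The main (and in fact only substantive) obstacle is the verification of the separation-of-variables representation together with the technical hypotheses of Theorem \ref{thm:iptp}; once that is done, both the existence as an MOI and the quantitative bound drop out immediately. Everything else --- Fubini, boundedness of the exponentials, and the Lipschitz estimate securing uniform equicontinuity on each $\Omega_k$ --- is routine given $\widehat{f^{(n)}}\in L^1$.
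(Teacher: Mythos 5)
Your argument is correct and is exactly the standard one underlying this result: the paper does not reprove the theorem but cites \cite[Lemmas 3.5, 5.1, 5.2]{PSS13} and \cite[Lemma 4.6]{azamov09}, whose proofs use precisely the Hermite--Genocchi simplex representation of $f^{[n]}$ combined with Fourier inversion of $f^{(n)}$ to reach the separated form of Theorem \ref{thm:iptp}. The only cosmetic point is that ``standard surface measure'' of the simplex should be read as the pushforward of Lebesgue measure under the parametrization $(s_1,\dots,s_n)\mapsto(1-\sum_j s_j,s_1,\dots,s_n)$ (which indeed has total mass $1/n!$), rather than the Hausdorff surface measure, but since you fix the total mass explicitly this does not affect the bound.
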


We derive the following lemma from the respective results of \cite[Section 4.3]{ST19}, which in turn were derived from the major results of \cite{PSS13}. The resulting lemma gives useful estimates and representations for traces of multiple operator integrals, improving upon \cite[Theorem 2.6 and Lemma 2.7]{CS18}.

\begin{lem}\label{lem43}
Let $k\in\N$ and let $H_0,\ldots,H_k$ be self-adjoint operators in $\H$.
\begin{enumerate}[(i)]
\item\label{item:tr bound i}
Let $\alpha_1\ldots,\alpha_k\in[1,\infty)$ be such that $\tfrac{1}{\alpha_1}+\ldots+\tfrac{1}{\alpha_k}=1$, and let $B_j\in\S^{\alpha_j}$, $j=1,\dots,k$.
If $H_0=H_k$, then there exists $c_\alpha=c_{\alpha_1,\dots,\alpha_k}>0$ such that
\begin{align*}
|\Tr(T^{H_0,\ldots,H_k}_{f^{[k]}}(B_1,\ldots,B_k))|&\leq c_\alpha\supnorm{f^{(k)}}\nrm{B_1}{\alpha_1}\cdots\nrm{B_k}{\alpha_k}\qquad (f\in C^k, \widehat{f^{(k)}}\in L^1)
\end{align*}
and there exist a unique $($complex$)$ Radon measure $\mu_1$ with total variation bounded by $c_\alpha\nrm{B_1}{\alpha_1}\cdots\nrm{B_k}{\alpha_k}$ such that
\begin{align*}
\Tr(T^{H_0,\ldots,H_k}_{f^{[k]}}(B_1,\ldots,B_k))&=\int_\R f^{(k)} d\mu_1\qquad (f\in C^k, \widehat{f^{(k)}}\in L^1).
\end{align*}

\item\label{item:tr bound ii}
Let $\alpha_1,\ldots,\alpha_k\in[1,\infty)$ be such that $\tfrac{1}{\alpha_1}+\ldots+\tfrac{1}{\alpha_k}=1$, and let $B_j\in\S^{\alpha_j}$, $j=1,\dots,k$.
Then, there exists $c_\alpha=c_{\alpha_1,\dots,\alpha_k}>0$ such that
\begin{align*}
|\Tr(B_1T^{H_1,\ldots,H_{k}}_{f^{[k-1]}}(B_2,\ldots,B_k))|&\leq c_\alpha\supnorm{f^{(k-1)}}\nrm{B_1}{\alpha_1}\cdots\nrm{B_k}{\alpha_k}\qquad ( f\in C_b^{k-1}).
\end{align*}
and there exist a unique Radon measure $\mu_2$ satisfying
$\|\mu_2\|\le c_\alpha\nrm{B_1}{\alpha_1}\cdots\nrm{B_k}{\alpha_k}$ and
\begin{align*}
\Tr(B_1 T^{H_1,\ldots,H_{k}}_{f^{[k-1]}}(B_2,\ldots,B_k))&=\int_\R f^{(k-1)}d\mu_2\,\qquad (f\in C_0^{k-1}).
\end{align*}
\end{enumerate}
\end{lem}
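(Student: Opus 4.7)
The plan is to deduce the lemma from the Radon measure representations in \cite[Section 4.3]{ST19}, repackaging them into the uniform sup-norm bounds and notation used here. The common starting point is the integral representation of the divided difference
\[
f^{[k]}(\la_0,\ldots,\la_k)=\int_{\R}\widehat{f^{(k)}}(s)\int_{\Delta_k}e^{iss_0\la_0}\cdots e^{iss_k\la_k}\,d\sigma(s_0,\ldots,s_k)\,ds,
\]
valid for every $f\in C^k$ with $\widehat{f^{(k)}}\in L^1$, where $\Delta_k$ is the standard $k$-simplex and $\sigma$ its natural surface measure.

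For part \eqref{item:tr bound i}, Theorem \ref{thm:iptp} converts the displayed identity into an analogous one for the MOI, with each $e^{iss_j\la_j}$ replaced by $e^{iss_j H_j}$ and sandwiched between the $B_j$'s. Taking the trace and using cyclicity under the assumption $H_0=H_k$, the two boundary exponentials merge, and the trace acquires the shape $\int_\R\widehat{f^{(k)}}(s)F(s)\,ds$ for a bounded continuous function $F=F(s)$ depending only on the $B_j$'s and $H_j$'s. The central input from \cite[Section 4.3]{ST19} is that $F=\widehat{\mu_1}(-\cdot)$ for a finite Radon measure $\mu_1$ with $\|\mu_1\|\le c_\alpha\prod_{j=1}^k\nrm{B_j}{\alpha_j}$. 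Fourier inversion turns the trace into $\int f^{(k)}\,d\mu_1$, from which the $\supnorm{f^{(k)}}$ bound is immediate. Uniqueness of $\mu_1$ follows from the Riesz representation theorem once one verifies that $\{f^{(k)}:f\in C^k,\,\widehat{f^{(k)}}\in L^1\}$ is dense in $C_0(\R)$ and applies a standard extension argument using the just-established sup-norm bound.

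For part \eqref{item:tr bound ii}, the order of the MOI is lowered by one, no relation between endpoint operators is assumed, and $B_1$ appears outside the MOI; the trace of the product is well defined by H\"older's inequality. The same expansion applied to $f^{[k-1]}$, followed by cyclicity to place $B_1$ next to $e^{iss_1 H_1}$, produces
\[
\Tr(B_1 T^{H_1,\ldots,H_k}_{f^{[k-1]}}(B_2,\ldots,B_k))=\int_\R\widehat{f^{(k-1)}}(s)\tilde F(s)\,ds
\]
for a bounded continuous $\tilde F$, which the companion statement in \cite[Section 4.3]{ST19} identifies as the Fourier transform of a finite Radon measure $\mu_2$ with $\|\mu_2\|\le c_\alpha\prod_{j=1}^k\nrm{B_j}{\alpha_j}$. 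The trace formula holds on the dense subclass $\{\widehat{f^{(k-1)}}\in L^1\}$ and extends to $C_0^{k-1}$ by density; the bound extends to all of $C_b^{k-1}$ since $\tilde F\in L^1$ would force the bound, but more carefully it follows from inserting a smooth cutoff, applying the bound there, and passing to the limit.

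The main obstacle is not in the outline but in the identification of $F$ and $\tilde F$ as Fourier transforms of \emph{finite Radon measures}, as opposed to merely bounded continuous functions or tempered distributions; this is precisely the nontrivial analytic content of \cite[Section 4.3]{ST19}, which in turn refines the estimates of \cite{PSS13}. Everything else—the divided difference expansion, the use of cyclicity to exploit $H_0=H_k$ in part \eqref{item:tr bound i}, and the density-based extensions—is routine once those measure representations are in hand.
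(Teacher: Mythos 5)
Your proposal is correct and follows essentially the same route as the paper: both reduce the claim, via cyclicity of the trace (merging the endpoints when $H_0=H_k$), to the sup-norm Schatten estimates for multiple operator integrals with polynomial-integral-momentum symbols from \cite[Section 4.3]{ST19} (ultimately \cite{PSS13}), and then produce the representing measure by duality. The only difference is presentational: you phrase the key external input as the identification of the trace functional with the Fourier transform of a finite measure, whereas the paper invokes \cite[Theorem 4.3.10]{ST19} as a Schatten-norm bound, applies H\"older, and then obtains the measure via Riesz--Markov and Hahn--Banach — these are equivalent formulations of the same nontrivial content.
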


\begin{proof}
(i) For $f\in C^k$ with $\widehat{f^{(k)}}\in L^1$ we have $T_{f^{[k]}}^{H_0,\ldots,H_k}(B_1,\ldots,B_k)\in\S^1$ by Theorem \ref{Salphamember}. By Theorem \ref{thm:iptp}, we obtain
$$\Tr(T^{H_k,H_1,\ldots,H_k}_{f^{[k]}}(B_1,\ldots,B_k))
=\Tr(B_1T^{H_1,\ldots,H_k}_{\psi_{f,k}}(B_2,\ldots,B_k))$$
for $\psi_{f,k}(\lambda_1,\ldots,\lambda_k):=f^{[k]}(\lambda_k,\lambda_1,\ldots,\lambda_k)$. The symbol $\psi_{f,k}$ is an instance of a polynomial integral momentum (see \cite[Equation (4.3.14)]{ST19}) of the form $\phi_{k-1,f^{(k)},p}$ for $p(s_0,\ldots,s_{k-1})=s_0$, and therefore the corresponding multiple operator integral $T_{\psi_{f,k}}$ can be bounded by \cite[Theorem 4.3.10]{ST19} as follows. Assume firstly that $k\geq 2$. In that case, $\frac{1}{\alpha_1}+\ldots+\frac{1}{\alpha_k}=1$ implies that $\alpha_1,\ldots,\alpha_k\in(1,\infty)$. By H\"older's inequality and \cite[Theorem 4.3.10]{ST19}, we have
\begin{align*}
|\Tr(T^{H_k,H_1,\ldots,H_k}_{f^{[k]}}(B_1,\ldots,B_k))|&\leq \nrm{B_1}{\alpha_1}\nrm{T^{H_1,\ldots,H_k}_{\psi_{f,k}}(B_2,\ldots,B_k)}{\alpha_1'}\\
&\leq c_\alpha\supnorm{f^{(k)}}\nrm{B_1}{\alpha_1}\cdots\nrm{B_k}{\alpha_k},
\end{align*}
where $\alpha_1'\in(1,\infty)$ is the H\"older conjugate of $\alpha_1$. If $k=1$, then $\psi_{f,k}=f'$ and we find
\begin{align*}
|\Tr(T^{H_1,H_1}_{f^{[1]}}(B_1))|= |\Tr(B_1f'(H_1))|\leq \supnorm{f^{(1)}}\nrm{B_1}{1}.
\end{align*}
Therefore, the first statement of \eqref{item:tr bound i} follows. By the Riesz--Markov representation theorem, along with the Hahn--Banach theorem, we obtain the second statement of \eqref{item:tr bound i}.

Item \eqref{item:tr bound ii} follows completely analogously, by using the polynomial integral momentum $f^{[k-1]}=\phi_{k-1,f^{(k-1)},1}$.
\end{proof}

We will need the following perturbation formula stated in \cite[Theorem 3.3.8]{ST19} for $n=0$ and in \cite[Theorem 4.3.14]{ST19} for $n\in\N$.
\begin{thm}
\label{perturbation}
Let $n\in\N\cup\{0\}$ and let $f\in C^{n+1}(\R)$ be such that $\widehat{f^{(k)}}\in L^1(\R)$, $k=1,\dots,n+1$.
Let $H_1,\dots,H_n,A,B$ be self-adjoint in $\H$ such that $A-B$ is bounded.
Let $V_1,\dots,V_{n}$ be bounded operators on $\H$. Then, for every $i=1,\dots,n+1$,
\begin{align*}
&T_{f^{[n]}}^{H_1,\dots,H_{i-1},A,H_i,\dots,H_n}(V_1,\dots,V_n)
-T_{f^{[n]}}^{H_1,\dots,H_{i-1},B,H_i,\dots,H_n}(V_1,\dots,V_n)\\
&=T_{f^{[n+1]}}^{H_1,\dots,H_{i-1},A,B,H_i,\dots,H_n}(V_1,\dots,V_{i-1},A-B,V_i,\dots,V_n).
\end{align*}
\end{thm}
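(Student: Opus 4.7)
The plan is to deduce the operator identity from the scalar divided-difference identity
\begin{align*}
&f^{[n]}(\mu_1,\dots,\mu_{i-1},a,\mu_i,\dots,\mu_n)-f^{[n]}(\mu_1,\dots,\mu_{i-1},b,\mu_i,\dots,\mu_n)\\
&\quad=(a-b)\,f^{[n+1]}(\mu_1,\dots,\mu_{i-1},a,b,\mu_i,\dots,\mu_n),
\end{align*}
which is immediate from the recursive definition of the divided difference combined with its permutation symmetry. At the scalar level this makes the factor $(a-b)$ explicit in slot $i$; the task is then to lift this identity to the multilinear operator integral.

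First I would employ the simplex/Fourier representation
\[
f^{[n]}(\mu_0,\dots,\mu_n)=\int_{\R}\widehat{f^{(n)}}(t)\int_{\Sigma_n}\prod_{j=0}^n e^{i t s_j \mu_j}\,d\sigma(s)\,dt,
\]
valid under the hypothesis $\widehat{f^{(n)}}\in L^1$, where $\Sigma_n$ is the standard $n$-simplex. This is exactly the separation-of-variables form required by Theorem~\ref{thm:iptp}, with $a_j(\mu,(t,s))=e^{i t s_j \mu}$. Since $\widehat{f^{(k)}}\in L^1$ for $k=1,\dots,n+1$, both $f^{[n]}$ and $f^{[n+1]}$ admit such a representation, and consequently by Theorem~\ref{thm:iptp} the MOIs appearing in the claim can be written as Bochner integrals against unitary groups $e^{i t s_j H_j}$ and $e^{i t s_i A}$, $e^{i t s_i B}$.

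Second I would apply Duhamel's formula
\[
e^{i t s_i A}-e^{i t s_i B}=i t s_i \int_0^1 e^{i t s_i (1-\tau) A}(A-B)\,e^{i t s_i \tau B}\,d\tau
\]
to the slot-$i$ factor inside the Bochner integral representation of the left-hand side. The new parameter $\tau\in[0,1]$ together with $s_i\,d\tau\,d\sigma(s)$ combines with the measure on $\Sigma_n$ to yield the measure on $\Sigma_{n+1}$ that appears in the simplex representation of $f^{[n+1]}$, while the extra factor $i t$ converts $\widehat{f^{(n)}}$ into $\widehat{f^{(n+1)}}$ up to a sign. The resulting expression is precisely the integral representation of $T_{f^{[n+1]}}^{H_1,\dots,H_{i-1},A,B,H_i,\dots,H_n}(V_1,\dots,V_{i-1},A-B,V_i,\dots,V_n)$ supplied by Theorem~\ref{thm:iptp}.

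The main obstacle is the rigorous justification of the interchange of integrals/limits and the bookkeeping of the change of variables linking $\Sigma_n\times[0,1]$ to $\Sigma_{n+1}$. For the former, Fubini applies because $\widehat{f^{(n+1)}}\in L^1$ and all unitary factors are uniformly bounded in operator norm, while $A-B$ is bounded by hypothesis; membership of both sides in the appropriate Schatten class is guaranteed by Theorem~\ref{Salphamember}. The measure-theoretic identification of $\Sigma_n\times[0,1]$ with $\Sigma_{n+1}$ via $(s,\tau)\mapsto(s_1,\dots,s_{i-1},(1-\tau)s_i,\tau s_i,s_{i+1},\dots,s_n)$ is a routine computation whose Jacobian produces the factor $s_i$ automatically.
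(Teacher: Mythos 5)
The paper does not actually prove Theorem \ref{perturbation}: it is imported verbatim from \cite[Theorems 3.3.8 and 4.3.14]{ST19}, so there is no internal proof to compare against. Your argument is, in substance, the standard proof of that cited result (it is essentially the route taken in \cite{azamov09} and \cite{PSS13}): Hermite--Genocchi plus Fourier inversion gives the separated representation of $f^{[n]}$ required by Theorem \ref{thm:iptp}, Duhamel's formula handles the slot-$i$ difference $e^{its_iA}-e^{its_iB}$ (valid for unbounded $A,B$ with $A-B$ bounded, checked first on the common domain), and the substitution $(s_i,\tau)\mapsto\bigl((1-\tau)s_i,\tau s_i\bigr)$ has Jacobian $s_i$, which together with the factor $it$ converting $\widehat{f^{(n)}}$ into $\widehat{f^{(n+1)}}$ reassembles the representation of $f^{[n+1]}$. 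Theorem \ref{thm:iptp} then identifies both Bochner integrals with the MOIs of Definition \ref{nosep}, and Fubini is justified since $\int |t|\,|\widehat{f^{(n)}}(t)|\,dt=\|\widehat{f^{(n+1)}}\|_1<\infty$. For $n\ge 1$ this is complete modulo the routine details you flag.

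One genuine (though minor and well known) gap: the case $n=0$, which the statement explicitly includes. There the hypothesis supplies only $\widehat{f'}\in L^1$, not $\widehat{f}\in L^1$, so you cannot write $f(A)-f(B)=\int\widehat{f}(t)\bigl(e^{itA}-e^{itB}\bigr)\,dt$ as your first step requires. One must instead argue directly that $f(A)-f(B)=\int\widehat{f'}(t)\int_0^1 e^{it(1-\tau)A}(A-B)e^{it\tau B}\,d\tau\,dt$, e.g.\ by first establishing the identity for a dense subclass with $\widehat{f}\in L^1$ and extending by continuity in the seminorm $\|\widehat{f'}\|_1$, which controls both sides. This is exactly why the paper cites a separate theorem of \cite{ST19} for $n=0$.
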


The following useful change of variables formula was obtained in \cite[Theorem 3.10]{vNS21}.
\begin{lem}\label{lem:change of variables}
	Let $n\in\N$, let $H_0,\ldots,H_n$ be self-adjoint in $\H$, let $V_1,\ldots,V_n\in\mB(\H)$, and let $f\in C^n$ be such that $\widehat{f^{(n-1)}},\widehat{f^{(n)}},\widehat{(fu)^{(n)}}\in L^1(\R)$.
Then, the following assertions hold.
\begin{enumerate}[(i)]
\item\label{cv1}	
\begin{align*}
T^{H_0,\ldots,H_n}_{f^{[n]}}(V_1,\ldots,V_n)=&(H_0-i)^{-1}T^{H_0,\ldots,H_n}_{(fu)^{[n]}}(V_1,\ldots,V_n)\\
		&-(H_0-i)^{-1}V_1T^{H_1,\ldots,H_n}_{f^{[n-1]}}(V_2,\ldots,V_n).
	\end{align*}

\item\label{cv2}
For every $j\in\{1,\ldots,n-1\}$,
\begin{align*}
T^{H_0,\ldots,H_n}_{f^{[n]}}(V_1,\ldots,V_n)=&T^{H_0,\ldots,H_n}_{(fu)^{[n]}}(V_1,\ldots,V_j(H_j-i)^{-1},V_{j+1},\ldots, V_n)\\
&-T^{H_0,\ldots,H_{j-1},H_{j+1},\ldots,H_n}_{f^{[n-1]}}(V_1,\ldots,V_j(H_j-i)^{-1}V_{j+1},\ldots,V_n).
	\end{align*}

\item\label{cv3}
\begin{align*}
T^{H_0,\ldots,H_n}_{f^{[n]}}(V_1,\ldots,V_n)=&T^{H_0,\ldots,H_n}_{(fu)^{[n]}}(V_1,\ldots,V_n)(H_n-i)^{-1}\\
		&-T^{H_0,\ldots,H_{n-1}}_{f^{[n-1]}}(V_1,\ldots,V_{n-1})V_n(H_n-i)^{-1}.
	\end{align*}
\end{enumerate}
\end{lem}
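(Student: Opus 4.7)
My plan is to reduce each of the three MOI identities to an elementary identity for divided differences of $fu$, and then to translate the symbol-level identity into the operator statement using the IPTP representation guaranteed by the hypotheses on $\widehat{f}$.

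The key symbol-level identity is a special case of the Leibniz rule for divided differences. Since $u(\lambda)=\lambda-i$ satisfies $u^{[0]}(\lambda)=\lambda-i$, $u^{[1]}(\lambda_0,\lambda_1)=1$, and $u^{[k]}\equiv 0$ for $k\ge 2$, the Leibniz rule collapses to
\[
(fu)^{[n]}(\lambda_0,\ldots,\lambda_n)=(\lambda_0-i)\,f^{[n]}(\lambda_0,\ldots,\lambda_n)+f^{[n-1]}(\lambda_1,\ldots,\lambda_n).
\]
By the permutation symmetry of divided differences in their arguments, the same identity holds with $\lambda_0$ replaced by any $\lambda_j$ and $f^{[n-1]}(\lambda_1,\ldots,\lambda_n)$ replaced by $f^{[n-1]}(\lambda_0,\ldots,\widehat{\lambda_j},\ldots,\lambda_n)$. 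Dividing through by $\lambda_j-i$ gives the symbol identity underlying each part: $j=0$ for \eqref{cv1}, arbitrary $j\in\{1,\ldots,n-1\}$ for \eqref{cv2}, and $j=n$ for \eqref{cv3}.

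Step two is to ensure all the MOIs involved are well-defined and admit IPTP representations. Under the hypotheses $\widehat{f^{(n-1)}},\widehat{f^{(n)}},\widehat{(fu)^{(n)}}\in L^1$, Theorem \ref{Salphamember} supplies a representation as in Theorem \ref{thm:iptp} for each of $f^{[n]}$, $(fu)^{[n]}$, and $f^{[n-1]}$, so every MOI in the statement is a bounded multilinear operator of the form
\[
T^{H_0,\ldots,H_n}_{\phi}(V_1,\ldots,V_n)=\int_\Omega a_0(H_0,s)V_1 a_1(H_1,s)\cdots V_n a_n(H_n,s)\,d\nu(s).
\]
Inside this representation, multiplication of $\phi$ by $(\lambda_j-i)^{-1}$ amounts to replacing $a_j(H_j,s)$ by $a_j(H_j,s)(H_j-i)^{-1}=(H_j-i)^{-1}a_j(H_j,s)$; because $(H_j-i)^{-1}$ commutes with every function of $H_j$, it may be moved either to the left of $a_j$ (producing $V_j(H_j-i)^{-1}$) or to its right (producing $(H_j-i)^{-1}V_{j+1}$). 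For $j=0$ the resolvent pulls out of the integral on the far left, and for $j=n$ on the far right, recovering the factors $(H_0-i)^{-1}$ and $(H_n-i)^{-1}$ displayed in \eqref{cv1} and \eqref{cv3}. For the $f^{[n-1]}$ summand, the symbol $f^{[n-1]}(\lambda_0,\ldots,\widehat{\lambda_j},\ldots,\lambda_n)$ no longer depends on $\lambda_j$, so its MOI is taken with respect to the shortened list $H_0,\ldots,\widehat{H_j},\ldots,H_n$, and the $(H_j-i)^{-1}$ is absorbed between the neighbouring inputs as $V_j(H_j-i)^{-1}V_{j+1}$ in \eqref{cv2}, or as the boundary factor $(H_0-i)^{-1}V_1$ or $V_n(H_n-i)^{-1}$ in \eqref{cv1} and \eqref{cv3}.

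The main technical point is the rigorous passage from the pointwise symbol identity to the operator identity under Definition \ref{nosep}, whose iterated-limit prescription is not overtly compatible with algebraic manipulations of $\phi$. The cleanest route is to verify the identity on the IPTP representative (where the pull-through of $(H_j-i)^{-1}$ is an exact factor-by-factor identity on the integrand) and then invoke Theorem \ref{thm:iptp} to conclude agreement with the iterated-limit MOI of Definition \ref{nosep}. I do not anticipate serious analytical difficulty, since the assumptions $\widehat{f^{(n-1)}},\widehat{f^{(n)}},\widehat{(fu)^{(n)}}\in L^1$ were imposed precisely so that all three MOIs involved are simultaneously IPTP-representable.
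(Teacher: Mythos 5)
Your proposal is correct and follows essentially the route the authors themselves take: the paper quotes this lemma from \cite[Theorem 3.10]{vNS21} without reproving it, but its own generalization (Theorem \ref{thm:generalized change of variables}) is proved from exactly your two ingredients — the Leibniz-rule identity $f^{[n]}(\lambda_0,\ldots,\lambda_n)=(fu)^{[n]}(\lambda_0,\ldots,\lambda_n)u^{-1}(\lambda_j)-f^{[n-1]}(\lambda_0,\ldots,\lambda_{j-1},\lambda_{j+1},\ldots,\lambda_n)u^{-1}(\lambda_j)$ (their \eqref{eq:adding one weight}) followed by the transfer to operators via the separated representation of Theorem \ref{thm:iptp}, with the resolvent placed on either side of the comma. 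Your explicit attention to why the pointwise symbol identity passes to the iterated-limit MOI of Definition \ref{nosep} is exactly the point the paper handles by invoking Theorem \ref{thm:iptp}, so there is no gap.
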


\paragraph{Taylor remainder.}
Let $R_{n,H,f}(V)$ denote the $n$th Taylor remainder of the approximation of $f(H+V)$ by $f(H)$, that is,
\begin{align*}
R_{n,H,f}(V)=f(H+V)-\sum_{k=0}^{n-1}\frac{1}{k!}\frac{d^k}{dt^k}f(H+tV)\big|_{t=0}.
\end{align*}

Multiple operator integrals are multilinear extensions of operator derivatives, in the sense of the following theorem, proven, e.g., in \cite[Theorem 5.3.5]{ST19}.

\begin{thm}
\label{dm}
Let $n\in\N$ and let $f\in C^n(\R)$ be such that $\widehat{f^{(k)}}\in L^1(\R)$, $k=1,\dots,n$. Let $H$ be self-adjoint in $\H$, let $V\in\mB(\H)_{\textnormal{sa}}$.
Then the Fr\'{e}chet derivative $\frac{1}{k!}\frac{d^k}{dt^k}f(H+tV)|_{t=0}$ exists in the operator norm and equals the multiple operator integral
\begin{align}
\label{dermoi}
\frac{1}{k!}\frac{d^k}{ds^k}f(H+sV)\big|_{s=t}=T_{f^{[k]}}^{H+tV,\dots,H+tV}(V,\dots,V).
\end{align}
The map $t\mapsto\frac{d^k}{ds^k}f(H+sV)|_{s=t}$
is strongly continuous and, when $V\in\S^n$, $\S^1$-continuous.
\end{thm}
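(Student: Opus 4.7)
The plan is to verify the Daleckii--Krein-type identity \eqref{dermoi} first for the exponential symbols $\phi_\xi(\lambda) := e^{i\xi\lambda}$ via iterated Duhamel's formula, then extend to a general $f$ by Fourier-inverting $f^{(k)}$ inside the Hermite--Genocchi representation of $f^{[k]}$, and finally obtain the two continuity assertions from the Bochner-integral representation of the MOI provided by Theorem \ref{thm:iptp}.

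For the exponential case, iterating Duhamel's formula for $s\mapsto e^{i\xi(H+sV)}$ in the operator norm gives
$$\frac{1}{k!}\frac{d^k}{ds^k}e^{i\xi(H+sV)}\Big|_{s=t} = (i\xi)^k \int_{\Sigma_k} e^{i\xi s_0(H+tV)}\, V\, e^{i\xi s_1(H+tV)}\, V \cdots V\, e^{i\xi s_k(H+tV)}\, ds,$$
over the standard $k$-simplex $\Sigma_k = \{s\in\R_{\geq 0}^{k+1} : s_0+\cdots+s_k = 1\}$. The Hermite--Genocchi formula rewrites $\phi_\xi^{[k]}$ in the separated form \eqref{phi representation} with $a_j(\lambda,s) = e^{i\xi s_j\lambda}$, so Theorem \ref{thm:iptp} identifies the right-hand side with $T_{\phi_\xi^{[k]}}^{H+tV,\ldots,H+tV}(V,\ldots,V)$, establishing \eqref{dermoi} for $f = \phi_\xi$.

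For a general $f$ with $\widehat{f^{(j)}}\in L^1$ for $j = 1,\ldots,n$, inserting Fourier inversion of $f^{(k)}$ into the Hermite--Genocchi representation yields
$$f^{[k]}(\lambda_0,\ldots,\lambda_k) = \int_\R \widehat{f^{(k)}}(\xi) \int_{\Sigma_k} e^{i\xi(s_0\lambda_0+\cdots+s_k\lambda_k)}\, ds\, d\xi,$$
whose Theorem \ref{thm:iptp}-image is a Bochner integral in $\xi$ of the exponential-case MOIs, controlled by the integrable majorant $|\widehat{f^{(k)}}(\xi)|\|V\|^k/k!$. Matching this with the iterated perturbation expansion $f(H+sV) = \sum_{j=0}^{k-1} s^j T_{f^{[j]}}^{H,\ldots,H}(V,\ldots,V) + s^k T_{f^{[k]}}^{H+sV,H,\ldots,H}(V,\ldots,V)$ (obtained by applying Theorem \ref{perturbation} $k$ times) and showing the remainder is $o(s^k)$ in operator norm identifies the $k$-th Fréchet derivative with the MOI. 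For $k < n$, the remainder continuity follows from one further application of Theorem \ref{perturbation} plus Theorem \ref{Salphamember} using $\widehat{f^{(k+1)}}\in L^1$; for the endpoint $k = n$, one argues directly with the Bochner representation above, whose $s$-uniform $L^1(d\xi)$-majorant is exactly what $\widehat{f^{(n)}}\in L^1$ supplies.

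For the continuity assertions, the Bochner representation exhibits $t\mapsto T_{f^{[k]}}^{H+tV,\ldots,H+tV}(V,\ldots,V)$ as a $\xi$-integral whose integrand depends on $t$ only through the unitary groups $e^{i\xi s_j(H+tV)}$, which are strongly continuous in $t$ by standard perturbation theory. Lemma \ref{lem:sequential continuity} combined with dominated convergence in $\xi$ yields strong operator continuity. When $V\in\S^n$, Theorem \ref{Salphamember} with $\alpha_1 = \cdots = \alpha_n = n$ places the MOI in $\S^1$ with a $t$-uniform bound, and Lemma \ref{lem:sequential continuity} in the form $\L^\infty\times\L^n\times\cdots\times\L^n\to\L^1$ upgrades the strong convergence of the integrand to $\S^1$-convergence, preserved under the $\xi$-integral by dominated convergence. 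The main obstacle is the $k = n$ case of the Fréchet identification, where one cannot afford the extra derivative required by Theorem \ref{perturbation}; this is circumvented by working directly with the Bochner representation and exploiting that the iterated Duhamel bound produces an operator-norm $L^1(d\xi)$-integrable majorant precisely under the assumed hypothesis $\widehat{f^{(n)}}\in L^1$.
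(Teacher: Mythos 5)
Your proposal is essentially correct, but note that the paper does not prove Theorem \ref{dm} at all: it is quoted as a known result with the citation \cite[Theorem 5.3.5]{ST19}, so there is no in-paper argument to compare against. What you have written is, in substance, the standard proof underlying that citation (going back to Peller, Azamov--Carey--Dodds--Sukochev, and \cite{PSS13}): Duhamel iteration for $e^{i\xi(H+sV)}$, the Hermite--Genocchi/Fourier representation $f^{[k]}(\lambda_0,\dots,\lambda_k)=\int_\R\widehat{f^{(k)}}(\xi)\int_{\Sigma_k}e^{i\xi\sum_j s_j\lambda_j}\,ds\,d\xi$ feeding into Theorem \ref{thm:iptp}, and the Taylor-remainder identification via Theorem \ref{perturbation}. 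Two points deserve to be made explicit. First, at the endpoint $k=n$ your ``argue directly with the Bochner representation'' step needs the pointwise-in-$\xi$ operator-norm convergence of the integrand, which comes from the Duhamel estimate $\|e^{i\xi s_0(H+sV)}-e^{i\xi s_0H}\|\le|\xi|\,|s|\,\|V\|$ for each fixed $\xi$, combined with the $s$-uniform majorant $|\widehat{f^{(n)}}(\xi)|\,\|V\|^n/n!$; without saying this, strong convergence of the unitary groups alone would only give strong (not norm) convergence of the remainder, which is not enough for Fr\'echet differentiability in operator norm. Second, the $\S^1$-continuity assertion is really about the top order $k=n$: your application of Lemma \ref{lem:sequential continuity} in the form $\L^\infty\times\L^n\times\cdots\times\L^n\to\L^1$ uses exactly $n$ copies of $V\in\S^n$, and for $k<n$ the same argument only lands in $\S^{n/k}$; this matches how the result is actually used in the paper (via Theorem \ref{rm} and Proposition \ref{prop:Tr(R^p(V-W)) conv comp}), so it is the right reading of the statement, but it is worth flagging.
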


We will use the following representation of the Taylor remainder via the multiple operator integral,
which is obtained by a recursive application of Theorem \ref{perturbation} and representation \eqref{dermoi} for $t=0$.
\begin{thm}
\label{rm}
Let $n\in\N$ and let $f\in C^n(\R)$ be such that $\widehat{f^{(k)}}\in L^1(\R)$, $k=1,\dots,n$.
Let $H$ be self-adjoint in $\H$ and let $V\in\mB(\H)_{\text{sa}}$. Then,
\begin{align}
\label{remmoi}
R_{n,H,f}(V)=T^{H,H+V,H,\ldots,H}_{f^{[n]}}(V,\ldots,V).
\end{align}
\end{thm}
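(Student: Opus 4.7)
The plan is to prove \eqref{remmoi} by induction on $n$. The engine is Theorem \ref{perturbation}, which converts a difference of two MOIs differing in a single operator slot into a single higher-order MOI, while Theorem \ref{dm} specialized to $t=0$ identifies each Taylor coefficient $\tfrac{1}{k!}\tfrac{d^{k}}{dt^{k}}f(H+tV)\big|_{t=0}$ with the MOI $T^{H,\dots,H}_{f^{[k]}}(V,\dots,V)$. The standing hypotheses $f\in C^{n}$ and $\widehat{f^{(k)}}\in L^{1}$ for $k=1,\dots,n$ cover the assumptions of both theorems at every stage of the induction, so hypothesis tracking is routine.

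For the base case $n=1$, I would apply Theorem \ref{perturbation} in its $n=0$ version with $i=1$, $A=H+V$, and $B=H$, so that $A-B=V$; this yields $R_{1,H,f}(V)=f(H+V)-f(H)=T^{H+V,H}_{f^{[1]}}(V)$. The same theorem with $A$ and $B$ swapped shows $f(H+V)-f(H)=T^{H,H+V}_{f^{[1]}}(V)$ as well, which is \eqref{remmoi} at $n=1$.

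For the inductive step, assuming \eqref{remmoi} at order $n-1$, I would start from the defining identity
\[
R_{n,H,f}(V)=R_{n-1,H,f}(V)-\tfrac{1}{(n-1)!}\tfrac{d^{n-1}}{dt^{n-1}}f(H+tV)\big|_{t=0},
\]
apply the induction hypothesis to the first term and Theorem \ref{dm} at $t=0$ to the second, arriving at
\[
R_{n,H,f}(V)=T^{H,H+V,H,\dots,H}_{f^{[n-1]}}(V,\dots,V)-T^{H,H,\dots,H}_{f^{[n-1]}}(V,\dots,V),
\]
where both MOIs have $n$ operator entries and $n-1$ arguments and differ only in the second operator slot ($H+V$ versus $H$). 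A single application of Theorem \ref{perturbation} with its ``$n$'' replaced by $n-1$, insertion position $i=2$, $A=H+V$, $B=H$, all intervening $H_{j}$ equal to $H$, and all $V_{j}=V$ (so $A-B=V$) collapses this difference into $T^{H,H+V,H,H,\dots,H}_{f^{[n]}}(V,V,\dots,V)$, which is exactly \eqref{remmoi} at order $n$.

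The only subtlety worth flagging is the choice $i=2$ in the inductive step: any other choice of the insertion slot would place the ``$H+V$'' in the wrong position of the operator tuple and break the stated form. Beyond that, the argument is pure bookkeeping with no serious obstacle.
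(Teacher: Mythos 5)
Your proof is correct and is essentially the paper's own argument: the paper states that Theorem \ref{rm} is obtained precisely by a recursive application of Theorem \ref{perturbation} together with the representation \eqref{dermoi} at $t=0$, which is exactly your induction. Your handling of the base case (swapping $A$ and $B$ to land the $H+V$ in the second slot) and the choice $i=2$ in the inductive step are the right bookkeeping details.
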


\paragraph{Resolvent comparable condition.}

\begin{lem}\label{lem:resolvent comparable}
Let $H$ be self-adjoint in $\H$ and let $V\in\mB(\H)_\text{sa}$.
The condition $$(H+V-i)^{-1}-(H-i)^{-1}\in\S^n$$ is equivalent to the condition $$(H-i)^{-1}V(H-i)^{-1}\in\S^n.$$
\end{lem}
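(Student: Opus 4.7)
The plan is to exploit the second resolvent identity
\begin{align*}
(H+V-i)^{-1}-(H-i)^{-1}=-(H+V-i)^{-1}V(H-i)^{-1}
\end{align*}
together with the ideal property of $\S^n$, by isolating $(H-i)^{-1}V(H-i)^{-1}$ as a factor inside $(H+V-i)^{-1}V(H-i)^{-1}$ (and vice versa) via multiplication by a bounded operator.

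First I would record the two elementary identities
\begin{align*}
(H+V-i)^{-1}(H-i)=I-(H+V-i)^{-1}V,\qquad (H-i)^{-1}(H+V-i)=I+(H-i)^{-1}V,
\end{align*}
which both define bounded operators on $\H$ because $V\in\mB(\H)$ and both resolvents are bounded.

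For the implication ``$(H-i)^{-1}V(H-i)^{-1}\in\S^n$ $\Rightarrow$ $(H+V-i)^{-1}-(H-i)^{-1}\in\S^n$'', I would write
\begin{align*}
(H+V-i)^{-1}V(H-i)^{-1}=\bigl[(H+V-i)^{-1}(H-i)\bigr]\cdot\bigl[(H-i)^{-1}V(H-i)^{-1}\bigr],
\end{align*}
so that the left-hand factor is bounded while the right-hand factor lies in $\S^n$; the ideal property of $\S^n$ gives the claim via the resolvent identity. For the converse, I would symmetrically write
\begin{align*}
(H-i)^{-1}V(H-i)^{-1}=\bigl[(H-i)^{-1}(H+V-i)\bigr]\cdot\bigl[(H+V-i)^{-1}V(H-i)^{-1}\bigr]
\end{align*}
and again invoke the ideal property.

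There is no real obstacle: the entire content is the algebraic manipulation of the resolvent identity together with the observation that $(H-i)^{-1}V$ and $(H+V-i)^{-1}V$ are bounded whenever $V$ is bounded. The lemma is essentially a bookkeeping statement, so the proof will be short.
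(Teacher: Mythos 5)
Your proposal is correct and rests on the same two ingredients as the paper's proof: the second resolvent identity and the ideal property of $\S^n$, with your bounded factors $I-(H+V-i)^{-1}V$ and $I+(H-i)^{-1}V$ playing exactly the role of the paper's twice-iterated resolvent formula (compare Lemma \ref{ric}). The only point worth polishing is that $(H+V-i)^{-1}(H-i)$ and $(H-i)^{-1}(H+V-i)$ are a priori defined only on $\dom(H)$, so one should note that they extend to the stated bounded operators and that the ranges of $(H-i)^{-1}V(H-i)^{-1}$ and $(H+V-i)^{-1}V(H-i)^{-1}$ lie in $\dom(H)$, which makes your factorizations legitimate.
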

\begin{proof}
The second resolvent formula, applied twice, gives
\begin{align}
	&(H-i)^{-1}-(H+V-i)^{-1}\label{rc1}\\
	&=(H-i)^{-1}V(H-i)^{-1}-\big((H-i)^{-1}-(H+V-i)^{-1}\big)V(H-i)^{-1}\label{rc2}\\
	&=(H-i)^{-1}V(H-i)^{-1}-(H+V-i)^{-1}V(H-i)^{-1}V(H-i)^{-1}\label{rc3}
\end{align}
The equality \eqref{rc1}=\eqref{rc2} shows that $(H+V-i)^{-1}-(H-i)^{-1}\in\S^n$ implies that $(H-i)^{-1}V(H-i)^{-1}\in\S^n$, whereas the equality \eqref{rc1}=\eqref{rc3} shows the reverse implication.
\end{proof}

\begin{remark}\label{rem:applications}
(i) Lemma \ref{lem:resolvent comparable}, in particular, shows that in either of the cases
$$V\in\S^n,\qquad (H-i)^{-1}\in\S^n,\qquad\text{or}\qquad V(H-i)^{-1}\in\S^n, $$
the resolvent comparable condition \eqref{rcc} is satisfied. The first case is quite stringent but is applicable to some discrete models.
The second and third cases are applicable to perturbations of differential operators (such as Dirac and Schr\"odinger operators) on respectively compact and locally compact spaces under certain restrictions on the dimension of the underlying space and function class of the potential (see \cite[Section 5.2]{vNS21} for details).
Generalizations of differential operators arising in noncommutative geometry also satisfy %$V(H-iI)^{-1}\in\S^n$ therefore
the condition \eqref{rcc} (see \cite[Section 5.1]{vNS21} for details).

(ii) To consider specific examples of operators satisfying \eqref{rcc}, let $\Delta$ denote the Laplacian densely defined in $L^2(\R^d)$, let $D$ denote the Dirac operator on $\C^{2^{\lfloor(d+1)/2\rfloor}}\otimes L^2(\R^d)$, and let $V$ denote the operator of multiplication by a real-valued function $v\in L^p(\R^d)\cap L^\infty(\R^d)$, where $d\in\N$ and $p\in[1,\infty)$. The following sufficient conditions are verified in \cite[Theorem III.1]{S21}:
\begin{align*}
&\text{if }\;p>\frac{d}{4},\;\text{ then }\;(-\Delta+V-i)^{-1}-(-\Delta-i)^{-1}\in \S^p;\\
&\text{if }\;p>\frac{d}{2},\;\text{ then }\;(-D+V-i)^{-1}-(-D-i)^{-1}\in \S^p.
\end{align*}
Suitable perturbations of massive Dirac operators with electromagnetic potentials also satisfy the condition \eqref{rcc}. Specific examples of such operators are discussed in
\cite[Remark III.6(iii)]{S21} and \cite[Remark 5.4]{vNS21}.

%\Aco{The aforementioned examples of massive Dirac operators with electromagnetic potentials are stated for the relative Schatten condition. Here is a place to give an example satisfying \underline{the resolvent comparable condition} for which we don't have a result ensuring the relative Schatten condition with the same $p$.}
%
%\Tco{If you have an example in mind you can add it, right now I don't know anything of value to add to points (i) and (ii).}
%
%\Aco{I'll need to do calculations some time.}
\end{remark}

We will need the following useful property of a perturbed resolvent.
\begin{lem}
\label{ric}
Let $H$ be self-adjoint in $\H$ and let $V\in \mB(\H)_{\text{sa}}$. Then,
\begin{align*}
&(H+V-i)^{-1}=(I-(H+V-i)^{-1}V)(H-i)^{-1},\\
&(H+V-i)^{-1}=(H-i)^{-1}(I-V(H+V-i)^{-1}).
\end{align*}
\end{lem}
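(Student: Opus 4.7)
The lemma is essentially the second resolvent identity specialized to the two resolvents $(H-i)^{-1}$ and $(H+V-i)^{-1}$. The plan is to derive each identity by a short algebraic manipulation, being careful only to check that the operator identities hold on suitable domains; since $V\in\mB(\H)_{\text{sa}}$, we have $\dom(H+V)=\dom(H)$ and both $(H-i)^{-1}$ and $(H+V-i)^{-1}$ are bounded everywhere-defined operators with range equal to $\dom(H)$, so there are no real analytic subtleties.

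For the first identity, I would start from the operator-level equality $H+V-i=(H-i)+V$ on $\dom(H)$. Composing on the right with $(H-i)^{-1}$, which maps $\H$ into $\dom(H)$, yields the bounded-operator identity
\begin{align*}
(H+V-i)(H-i)^{-1}=I+V(H-i)^{-1}.
\end{align*}
Applying $(H+V-i)^{-1}$ on the left gives
\begin{align*}
(H-i)^{-1}=(H+V-i)^{-1}+(H+V-i)^{-1}V(H-i)^{-1},
\end{align*}
and rearranging produces the first claim.

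For the second identity, I would mirror the argument on the other side: starting from $H+V-i=(H-i)+V$ restricted to $\dom(H)$, compose on the right with $(H+V-i)^{-1}$, which also maps $\H$ into $\dom(H)$, to obtain $I=(H-i)(H+V-i)^{-1}+V(H+V-i)^{-1}$ as a bounded-operator identity. Then apply $(H-i)^{-1}$ on the left to get
\begin{align*}
(H-i)^{-1}=(H+V-i)^{-1}+(H-i)^{-1}V(H+V-i)^{-1},
\end{align*}
and rearrange.

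The only potential pitfall is tracking domains carefully so that the compositions make sense before rearranging, but since $V$ is bounded this is automatic. No further estimates are needed.
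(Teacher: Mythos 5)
Your proof is correct and follows the same route as the paper, which simply invokes the second resolvent identity; you have spelled out the derivation (with the domain bookkeeping that makes it rigorous for unbounded $H$), and both rearrangements check out.
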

\begin{proof}
Both identities follow from the second resolvent identity.
\end{proof}

Given self-adjoint operators $H_0,\dots,H_n$ in $\H$ (some of which can possibly coincide) and $V_1,V_2,\ldots\in\mB(\H)$ (some of which can possibly coincide), we denote
\begin{align}
\label{bardef}
\nonumber
&\overline{V}_1=(H_0-i)^{-1}V_1(H_1-i)^{-1},\quad\overline{V}_2=V_2,\quad\overline{V}_3=(H_2-i)^{-1}V_3(H_3-i)^{-1},\dots,\\
&\overline{V}_{i,j}=\overline{V}_{i+1}\cdots\overline{V}_{j},\\
\nonumber
&\overline{V}_{0,0}=\oV_{n,n}=I.
\end{align}

\section{Estimating the trace of MOI on noncompact perturbations.}
%\section{Estimating MOI on resolvent comparable perturbations.}

In this section we develop a method enabling estimates for multilinear operator integrals on tuples of %noncompact
resolvent comparable
perturbations that are analogous to the estimates of Lemma \ref{lem43} for multilinear operator integrals on perturbations in $\S^\alpha$ for $1<\alpha<\infty$.
The method builds on a two-stage change of variables. The first stage is a generalization of the change of variables formula obtained in \cite{vNS21}. The second stage is an analytical result aimed specifically at resolvent comparable perturbations, which is derived from the first stage, but is also designed to be applied alongside the first stage.

\paragraph{Change of variables to confirm trace class membership of MOI.}

Let $m\in\N$ and let $H_0,\dots,H_m$ be self-adjoint in $\H$.
We will use formal symbols $\text{L}$ and $\text{R}$ to indicate whether a resolvent $(H_j-i)^{-1}$ is placed on the left of a comma, or on the right of a comma, respectively.
For $U_0,\ldots,U_m\in\mB(\H)$, $\epsilon=(\epsilon_0,\ldots,\epsilon_m)\in\{\text{L}, 0,\text{R}\}^{m+1}$ and $j\in\{0,\ldots,m\}$, we define $\epsilon_{-1}=0$ and
\begin{align}
\label{hatU}
	\check{U}^\epsilon_j:=
	\begin{cases}
	(H_{j-1}-i)^{-1}U_j(H_j-i)^{-1}&\text{if } \epsilon_{j-1}=\text{R},~\epsilon_j=\text{L}\\
		U_j(H_j-i)^{-1}&\text{if } \epsilon_{j-1}\neq\text{R},~\epsilon_j=\text{L}\\
			(H_{j-1}-i)^{-1}U_j&\text{if } \epsilon_{j-1}=\text{R},~\epsilon_j\neq\text{L}\\
				U_j&\text{if } \epsilon_{j-1}\neq\text{R},~\epsilon_j\neq\text{L}.
	\end{cases}
\end{align}

We will use the following consequences of the definition \eqref{hatU} in the sequel.

\begin{lem}
If $\epsilon_0,\ldots,\epsilon_{m-1}\in\{\textnormal{L},\textnormal{R}\}$ and $\epsilon_m=0$, then
\label{hatUc1}
$$\check{U}^\epsilon_0\cdots\check{U}^\epsilon_m=U_0(H_0-i)^{-1}U_1\cdots (H_{m-1}-i)^{-1}U_m.$$
\end{lem}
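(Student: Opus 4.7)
The guiding observation is that for each $j\in\{0,\ldots,m-1\}$, the resolvent $(H_j-i)^{-1}$ is supposed to appear exactly once in the product $\check U^\epsilon_0\cdots\check U^\epsilon_m$, sandwiched between $U_j$ and $U_{j+1}$. Looking at the four cases in \eqref{hatU}, the value $\epsilon_j\in\{\textnormal{L},\textnormal{R}\}$ only decides whether this particular resolvent is attached to $U_j$ on the right (case $\epsilon_j=\textnormal{L}$, contributing to $\check U^\epsilon_j$) or to $U_{j+1}$ on the left (case $\epsilon_j=\textnormal{R}$, contributing to $\check U^\epsilon_{j+1}$). Associativity of operator multiplication renders the distinction invisible once the product is taken, and this is what the identity asserts.

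To make this rigorous, I would rewrite each factor in the uniform form
\begin{align*}
\check U^\epsilon_0 &= U_0\cdot\bigl[(H_0-i)^{-1}\bigr]^{[\epsilon_0=\textnormal{L}]},\\
\check U^\epsilon_j &= \bigl[(H_{j-1}-i)^{-1}\bigr]^{[\epsilon_{j-1}=\textnormal{R}]}\cdot U_j\cdot\bigl[(H_j-i)^{-1}\bigr]^{[\epsilon_j=\textnormal{L}]},\quad 0<j<m,\\
\check U^\epsilon_m &= \bigl[(H_{m-1}-i)^{-1}\bigr]^{[\epsilon_{m-1}=\textnormal{R}]}\cdot U_m,
\end{align*}
where $[P]\in\{0,1\}$ is the Iverson bracket and an operator raised to the power $1$ (resp.\ $0$) is itself (resp.\ the identity $I$). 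This compact presentation is just a direct transcription of the four cases of \eqref{hatU} together with the conventions $\epsilon_{-1}=0$ and $\epsilon_m=0$ (which force the bracket factor to equal $I$ at the extremes exactly when the case is $j=0$ or $j=m$).

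Multiplying these factors together and collecting the occurrences of $(H_j-i)^{-1}$ for each fixed $j\in\{0,\ldots,m-1\}$, the resolvent appears with exponent
$$[\epsilon_j=\textnormal{L}]+[\epsilon_j=\textnormal{R}]=1,$$
using the hypothesis $\epsilon_j\in\{\textnormal{L},\textnormal{R}\}$, and in the product it sits between $U_j$ and $U_{j+1}$ (as the right-tail of $\check U^\epsilon_j$ or as the left-head of $\check U^\epsilon_{j+1}$, depending on $\epsilon_j$). The resulting product is therefore $U_0(H_0-i)^{-1}U_1(H_1-i)^{-1}\cdots(H_{m-1}-i)^{-1}U_m$, as claimed.

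The main (and only) obstacle is purely bookkeeping: organizing the case analysis cleanly enough that the four subcases of \eqref{hatU} do not obscure the simple fact that $\epsilon_j$ is just a switch between two equivalent placements of the same resolvent. The Iverson-bracket rewriting above is the device I would use to collapse this case analysis to a single line. An induction on $m$ (peeling off $\check U^\epsilon_m$ and invoking the identity for $m-1$ after splitting on whether $\epsilon_{m-1}=\textnormal{L}$ or $\textnormal{R}$) is an equally viable alternative.
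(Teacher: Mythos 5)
Your proof is correct; the paper states this lemma without proof, treating it as an immediate consequence of the definition \eqref{hatU}, and your Iverson-bracket rewriting $\check U^\epsilon_j=\bigl[(H_{j-1}-i)^{-1}\bigr]^{[\epsilon_{j-1}=\textnormal{R}]}U_j\bigl[(H_j-i)^{-1}\bigr]^{[\epsilon_j=\textnormal{L}]}$ is precisely the bookkeeping the authors leave implicit. Nothing is missing.
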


\begin{lem}
\label{hatUc2}
If $J\subseteq\{1,\ldots,m\}$ is such that $|j-j'|\geq2$ for all distinct $j,j'\in J$, then there exists $\epsilon\in\{\textnormal{L},\textnormal{R}\}^{m+1}$ such that $\epsilon_0=\textnormal{R}$, $\epsilon_m=\textnormal{L}$, and
\begin{align*}
\check{U}^\epsilon_j=(H_{j-1}-i)^{-1}U_j(H_j-i)^{-1}\quad\text{ for every }j\in J.
\end{align*}
\end{lem}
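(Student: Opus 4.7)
The plan is to construct the required $\epsilon$ explicitly, with the choice essentially forced by the first case of the definition \eqref{hatU}. In order to have $\check{U}^\epsilon_j=(H_{j-1}-i)^{-1}U_j(H_j-i)^{-1}$ for every $j\in J$, we must be in the first case of \eqref{hatU}, which means setting $\epsilon_{j-1}=\textnormal{R}$ and $\epsilon_j=\textnormal{L}$ for every $j\in J$. Together with the boundary requirements $\epsilon_0=\textnormal{R}$ and $\epsilon_m=\textnormal{L}$, this determines some coordinates of $\epsilon$; the remaining coordinates can be assigned arbitrarily within $\{\textnormal{L},\textnormal{R}\}$ (for definiteness, say $\textnormal{L}$).

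The only point requiring verification is that these prescriptions are mutually consistent, i.e., that no index $k\in\{0,\ldots,m\}$ is demanded to take both the value $\textnormal{L}$ and the value $\textnormal{R}$. Suppose $j<j'$ are distinct elements of $J$. The separation hypothesis $j'-j\geq 2$ gives $j'-1\geq j+1>j$, so the index $j'-1$ (which is forced to equal $\textnormal{R}$) is strictly larger than the index $j$ (which is forced to equal $\textnormal{L}$); similarly $j-1<j<j'-1<j'$ shows that the four prescribed indices $j-1,j,j'-1,j'$ are all distinct, hence no coordinate receives two incompatible values. The boundary prescriptions are compatible with the prescriptions coming from $J\subseteq\{1,\ldots,m\}$: if $1\in J$ then the requirement $\epsilon_0=\textnormal{R}$ agrees with the boundary condition, and if $1\notin J$ then $\epsilon_0$ is not otherwise constrained; an analogous remark applies at $m$.

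With such a well-defined $\epsilon\in\{\textnormal{L},\textnormal{R}\}^{m+1}$ in hand, the conclusion $\check{U}^\epsilon_j=(H_{j-1}-i)^{-1}U_j(H_j-i)^{-1}$ for every $j\in J$ is immediate from the first clause of \eqref{hatU}, since $\epsilon_{j-1}=\textnormal{R}$ and $\epsilon_j=\textnormal{L}$ by construction. There is no analytic obstacle: the entire content of the lemma is the combinatorial compatibility between the spacing condition on $J$ and the fact that the value of $\check{U}^\epsilon_j$ depends only on the two consecutive entries $\epsilon_{j-1},\epsilon_j$, so a local prescription at each $j\in J$ together with boundary values can always be extended to a global $\epsilon$ as soon as no two local prescriptions overlap.
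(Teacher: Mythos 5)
Your proof is correct and is exactly the argument the paper has in mind (the paper states Lemma \ref{hatUc2} without proof as an immediate consequence of the definition \eqref{hatU}): prescribe $\epsilon_{j-1}=\textnormal{R}$, $\epsilon_j=\textnormal{L}$ for $j\in J$, note that the spacing condition $|j-j'|\ge 2$ rules out the only possible clash $j=j'-1$, observe that $J\subseteq\{1,\ldots,m\}$ makes the boundary values $\epsilon_0=\textnormal{R}$, $\epsilon_m=\textnormal{L}$ compatible, and fill the remaining entries arbitrarily in $\{\textnormal{L},\textnormal{R}\}$.
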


We also set
\begin{align}
\label{checkUij}
\check{U}^\epsilon_{i,j}=\check{U}^\epsilon_{i+1}\cdots\check{U}^\epsilon_j.
\end{align}

Denote $\check{U}_{0,0}^{\epsilon}=\check U_{m,m}^\epsilon=I$ for all $\epsilon\in\{\textnormal{L},0,\textnormal{R}\}^{m+1}$ and $\epsilon(i)=\epsilon_i$ for $i=0,\dots,m$. The following theorem, along with its corollaries, forms the first stage of our change of variables method.

\begin{thm}\label{thm:generalized change of variables}
Let $m\in\N$, let $H_0,\dots,H_m$ be self-adjoint in $\H$
and let $V_1,\dots,V_m\in\mB(\H)$. Assume that $\epsilon=(\epsilon_0,\ldots,\epsilon_m)\in\{\textnormal{L},0,\textnormal{R}\}^{m+1}$ with $\epsilon_0\neq\textnormal{L}$ and $\epsilon_m\neq\textnormal{R}$. Then, the following assertions hold.
\begin{enumerate}[(i)]
\item\label{covi} If $\epsilon^{-1}(\{0\})=\emptyset$, then for all $g\in C^m$ such that $\widehat{g^{(m)}},\widehat{(gu^{k+1})^{(k)}}\in L^1(\R)$, $k=0,\dots,m$, we have
\begin{align*}
&T^{H_0,\ldots,H_m}_{g^{[m]}}(V_1,\ldots,V_m)\\
		&=\sum_{k=0}^m(-1)^{m-k}\sum_{\substack{{i_0},\ldots,i_k\in\{0,\ldots,m\}\\
{i_0}<\cdots<i_k}}\check V_{0,{i_0}}^\epsilon T^{H_{i_0},\ldots,H_{i_k}}_{(gu^{k+1})^{[k]}}(\check V_{i_0,i_1}^\epsilon,\ldots,\check V_{i_{k-1},i_k}^\epsilon)\check V_{i_k,m}^\epsilon.
\end{align*}

\item\label{covii} More generally, if $q=|\epsilon^{-1}(\{0\})|$, then for all $g\in C^m$ such that $\widehat{g^{(m)}},\widehat{(gu^{k-q+1})^{(k)}}\in L^1(\R)$, $k=\max(0,q-1),\dots,m$, we have
\begin{align}
\label{summax1q}
\nonumber
		&T^{H_0,\ldots,H_m}_{g^{[m]}}(V_1,\ldots,V_m)\\
		&=\sum_{k=\max(0,q-1)}^{m}(-1)^{m-k}\sum_{\substack{i_0,\ldots,i_k\in\{0,\ldots,m\}\\ i_0<\cdots<i_k\\\epsilon^{-1}(\{0\})\subseteq\{i_0,\ldots,i_k\}}}\check V_{0,i_0}^\epsilon T^{H_{i_0},\ldots,H_{i_k}}_{(gu^{k-q+1})^{[k]}}(\check V_{i_0,i_1}^\epsilon,\ldots,\check V_{i_{k-1},i_k}^\epsilon)\check V_{i_k,m}^\epsilon.
	\end{align}
\end{enumerate}
\end{thm}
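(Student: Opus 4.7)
The plan is to prove assertion (ii) by induction on the number of non-zero entries of $\epsilon$; assertion (i) then follows as the special case $q = 0$. The base case $\epsilon = (0, \dots, 0)$ (so $q = m+1$) forces $k = m$ to be the only value in the sum in \eqref{summax1q}, with $(i_0, \dots, i_m) = (0, \dots, m)$, every $\check V^\epsilon_j = V_j$, and $(gu^{k-q+1})^{[k]} = g^{[m]}$, collapsing the right-hand side to the left-hand side identically.

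For the inductive step, fix $\epsilon$ with $q \leq m$ zeros and assume (ii) holds for all configurations with strictly more zeros. Pick any index $j$ with $\epsilon_j \in \{\textnormal{L}, \textnormal{R}\}$ and let $\epsilon'$ agree with $\epsilon$ except that $\epsilon'_j = 0$; apply the induction hypothesis to $\epsilon'$. Every inner MOI in the $\epsilon'$-expansion is indexed by a subset $\{i_0, \dots, i_{k'}\}$ that necessarily contains $j$; write $i_\ell = j$. Apply Lemma \ref{lem:change of variables} at position $\ell$ of this sub-MOI: part (i) when $\ell = 0$ (which forces $j = 0$ and hence $\epsilon_0 = \textnormal{R}$), part (iii) when $\ell = k'$ (which forces $j = m$ and $\epsilon_m = \textnormal{L}$), and part (ii) otherwise, in its stated left-handed form when $\epsilon_j = \textnormal{L}$ and in the parallel right-handed form when $\epsilon_j = \textnormal{R}$. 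The right-handed analog of part (ii) comes from the same underlying divided-difference identity $(fu)^{[n]}(\lambda_0, \dots, \lambda_n) = u(\lambda_j) f^{[n]}(\lambda_0, \dots, \lambda_n) + f^{[n-1]}(\lambda_0, \dots, \lambda_{j-1}, \lambda_{j+1}, \dots, \lambda_n)$ by setting $V_{j+1} \mapsto (H_j - i)^{-1} V_{j+1}$ instead of $V_j \mapsto V_j (H_j - i)^{-1}$. Degenerate sub-MOIs with $k' = 0$, which arise only when $q = 0$, are handled directly by $g(H_j) = (H_j - i)^{-1}(gu)(H_j) = (gu)(H_j)(H_j - i)^{-1}$, producing only a retained term.

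Each such application splits one $\epsilon'$-term into at most two $\epsilon$-terms. The retained one keeps $j$ in the index set and multiplies the symbol by $u$ at position $\ell$, upgrading $(gu^{k'-q})^{[k']}$ to $(gu^{k-q+1})^{[k]}$ with $k = k'$. The contracted one removes $j$ and fuses neighboring $\check V^{\epsilon'}$-factors into $\check V^{\epsilon}_{i_{\ell-1}, i_{\ell+1}}$: the swap $\epsilon'_j = 0 \to \epsilon_j \in \{\textnormal{L}, \textnormal{R}\}$ inserts the fresh $(H_j - i)^{-1}$ at exactly the slot prescribed by definition \eqref{hatU}. The minus sign supplied by Lemma \ref{lem:change of variables} on the contracted term converts $(-1)^{m-k'}$ into $(-1)^{m-k}$ with $k = k' - 1$, matching the sign in \eqref{summax1q}. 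Collecting both families across all $\epsilon'$-terms and regrouping according to whether the resulting subset $S$ contains $j$ (retained) or not (contracted) reproduces exactly the $\epsilon$-expansion, summed over all $S \subseteq \{0, \dots, m\}$ with $S \supseteq \epsilon^{-1}(\{0\})$.

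The main obstacle is the index bookkeeping: verifying that the freshly produced $(H_j - i)^{-1}$ always slots into the correct $\check V^\epsilon$-factor under \eqref{hatU}, particularly at the boundary cases $\ell = 0$ and $\ell = k'$ where parts (i) and (iii) eject the resolvent outside the sub-MOI, forcing it to merge into the outer factors $\check V^\epsilon_{0, i_0}$ or $\check V^\epsilon_{i_k, m}$. A secondary check concerns the function-class hypothesis: each inductive application of Lemma \ref{lem:change of variables} requires $\widehat{(gu^l)^{(k)}} \in L^1$ at the intermediate powers $l$ that arise along the induction, which is exactly what the assumption $\widehat{(gu^{k-q+1})^{(k)}} \in L^1$ for $k = \max(0, q-1), \dots, m$ in (ii) is engineered to provide.
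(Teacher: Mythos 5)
Your proposal is correct in substance and shares the paper's inductive skeleton exactly: the base case $\epsilon=(0,\dots,0)$ and the step that converts one zero entry of $\epsilon$ into $\textnormal{L}$ or $\textnormal{R}$, with the retained/contracted split coming from the two-term identity $g^{[m]}(\lambda_0,\dots,\lambda_m)=(gu)^{[m]}(\lambda_0,\dots,\lambda_m)u^{-1}(\lambda_j)-g^{[m-1]}(\lambda_0,\dots,\lambda_{j-1},\lambda_{j+1},\dots,\lambda_m)u^{-1}(\lambda_j)$. The genuine difference is the level at which the induction runs. The paper carries out the entire induction on \emph{symbols}, proving a purely scalar divided-difference expansion of $g^{[m]}$ into terms $(gu^{k-q+1})^{[k]}(\lambda_{i_0},\dots,\lambda_{i_k})\prod_{\epsilon_i\neq 0}u^{-1}(\lambda_i)$, and only at the very end applies Theorem \ref{thm:iptp} once to pass to operators; this means no integrability hypotheses are consumed during the induction, and the stated conditions on $g$ are needed only to make sense of the final formula. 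You instead run the induction on \emph{operators}, invoking Lemma \ref{lem:change of variables} at each step, so every intermediate $\epsilon'$-expansion must itself be a valid operator identity. That costs you the conditions $\widehat{(gu^{k-q'+1})^{(k)}}\in L^1$ for all intermediate $q'>q$ (plus the first hypothesis of Lemma \ref{lem:change of variables}, $\widehat{(gu^{k'-q})^{(k'-1)}}\in L^1$, which happens to be the theorem's own condition at $k=k'-1$). Your assertion that the theorem's hypothesis is ``exactly engineered to provide'' these is not literally true: the hypothesis gives only the top power $l=k-q+1$ for each $k$. The intermediate powers do follow, via $(gu^{l})^{(k)}=\big((gu^{l+1})^{(k)}-k(gu^{l})^{(k-1)}\big)u^{-1}$, the fact that $\widehat{u^{-1}}\in L^1$, and a short double induction on $(l,k)$ downward from the diagonal, but that derivation is a real step you must supply. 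One further slip: $\ell=0$ means $j=i_0$ is the \emph{smallest element of the chosen index set}, not that $j=0$; likewise $\ell=k'$ does not force $j=m$. Parts \eqref{cv1} and \eqref{cv3} of Lemma \ref{lem:change of variables} still apply in these boundary cases, and the ejected resolvent $(H_j-i)^{-1}$ still merges into the correct $\check V^\epsilon$-factor because it commutes with $E_{H_j}$, so the error is harmless, but the parentheticals as written are wrong and should be removed.
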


\begin{proof}
Since $u^{[1]}=1_{\R^2}$ and $u^{[p]}=0$ for all $p\geq2$, the Leibniz rule for divided differences gives
$$(gu)^{[m]}(\lambda_0,\ldots,\lambda_m)=g^{[m]}(\lambda_0,\ldots,\lambda_m)u(\lambda_m)
+g^{[m-1]}(\lambda_0,\ldots,\lambda_{m-1}).$$
If we swap $\lambda_m$ with $\lambda_j$ (for any $j\in\{0,\ldots,m\}$), and rearrange using symmetry of the divided difference, we obtain
\begin{align}\label{eq:adding one weight}
g^{[m]}(\lambda_0,\ldots,\lambda_m)
=&(gu)^{[m]}(\lambda_0,\ldots,\lambda_m)u^{-1}(\lambda_j)\\
\nonumber
&-g^{[m-1]}(\lambda_0,\ldots,\lambda_{j-1},\lambda_{j+1},\ldots,\lambda_m)u^{-1}(\lambda_j).
\end{align}
As a consequence of \eqref{eq:adding one weight}, we shall prove the following claim inductively: for all $\epsilon\in\{\textnormal{L},0,\textnormal{R}\}^{m+1}$ with $|\epsilon^{-1}(\{0\})|=q$, we have
\begin{align}\label{eq:cov for symbols}
&g^{[m]}(\lambda_0,\ldots,\lambda_m)\\
&=\sum_{k=\max(0,q-1)}^m(-1)^{m-k}\sum_{\substack{i_0,\ldots,i_k\in\{0,\ldots,m\}\\ i_0<\cdots<i_k\\ \epsilon^{-1}(\{0\})\subseteq\{i_0,\ldots,i_k\}}}(gu^{k-q+1})^{[k]}(\lambda_{i_0},\ldots,\lambda_{i_k})
\prod_{\substack{i\in\{0,\ldots,m\}\\\epsilon_i\neq0}}u^{-1}(\lambda_i).\nonumber
\end{align}
The proof proceeds by induction on $m+1-q$.

Firstly we establish the base of induction for $m+1-q=0$ or, equivalently, $|\epsilon^{-1}(\{0\})|=q=m+1$. In this case $\epsilon=(0,\ldots,0)$ and the sum on the right-hand side of \eqref{eq:cov for symbols} comes down to a single term with $k=m$ and $i_j=j$, $j=0,\dots,m$,
%$i_j=j$, $j=0,\dots,m$,
which equals the term on the left-hand side of \eqref{eq:cov for symbols}.

For the induction step, we assume that the formula obtained from \eqref{eq:cov for symbols} by substituting $q'=q+1$ for $q$, where $0\leq q\leq m$, holds for all $\epsilon\in\{\textnormal{L},0,\textnormal{R}\}^{m+1}$ satisfying $|\epsilon^{-1}(\{0\})|=q'$. In the following, however, we will assume that $\epsilon\in\{\textnormal{L},0,\textnormal{R}\}^{m+1}$ is such that $|\epsilon^{-1}(\{0\})|=q$. Let $j_0$ be the smallest integer such that $\epsilon_{j_0}\neq0$. Define $\epsilon'\in\{\textnormal{L},0,\textnormal{R}\}^{m+1}$ by setting $\epsilon'_{j_0}:=0$ and $\epsilon'_j:=\epsilon_j$ for all $j\neq j_0$. We then have $|(\epsilon')^{-1}(\{0\})|=|\epsilon^{-1}(\{0\})|+1=q+1=q'$. Therefore, by the induction hypothesis, we have
\begin{align}
\label{gme1}
\nonumber
&g^{[m]}(\lambda_0,\ldots,\lambda_m)\\
&=\sum_{k=\max(0,q)}^m(-1)^{m-k}\sum_{\substack{i_0,\ldots,i_k\in\{0,\ldots,m\}\\
i_0<\cdots<i_k\\ (\epsilon')^{-1}(\{0\})\subseteq\{i_0,\ldots,i_k\}}}(gu^{k-q})^{[k]}
(\lambda_{i_0},\ldots,\lambda_{i_k})\prod_{\substack{i\in\{0,\ldots,m\}\\
\epsilon_i'\neq0}}u^{-1}(\lambda_i).
\nonumber\\
&=\sum_{k=\max(0,q)}^{m}(-1)^{m-k}\sum_{\substack{i_0,\ldots,i_k\in\{0,\ldots,m\}\\ i_0<\cdots<i_k\\ \{j_0\}\cup\epsilon^{-1}(\{0\})\subseteq\{i_0,\ldots,i_k\}}}(gu^{k-q})^{[k]}(\lambda_{i_0},\ldots,\lambda_{i_k})\prod_{\substack{i\in\{0,\ldots,m\}\\\epsilon_i\neq0,~i\neq j_0}}u^{-1}(\lambda_i).
\end{align}
Let $k\in\{\max(0,q),\ldots,m\}$ be arbitrary, and let $i_0,\ldots,i_k\in\{0,\ldots,m\}$ be such that $i_0<\cdots<i_k$ and $\{j_0\}\cup\epsilon^{-1}(\{0\})\subseteq\{i_0,\ldots,i_k\}$.

If $k=0$, then $q=0$, $j_0=0$, and $\{j_0\}\cup\epsilon^{-1}(\{0\})\subseteq\{i_0\}$ implies
$i_0=0$. Therefore,
\begin{align*}
(gu^{k-q})^{[k]}(\lambda_{i_0},\ldots,\lambda_{i_k})\prod_{\substack{i\in\{0,\ldots,m\}\\
\epsilon_i\neq0,~i\neq j_0}}u^{-1}(\lambda_i)&=g(\lambda_0)u^{-1}(\lambda_1)\cdots u^{-1}(\lambda_m)\\
&=(gu^{k-q+1})^{[ k]}(\lambda_{i_0},\ldots,\lambda_{i_k})\prod_{\substack{i\in\{0,\ldots,m\}
%\\\epsilon_i\neq0
}}u^{-1}(\lambda_i).
\end{align*}
If $k\geq 1$, then \eqref{eq:adding one weight} applied to $j=j_0$ implies
\begin{align}
\label{gme2}
\nonumber
&(gu^{k-q})^{[ k]}(\lambda_{i_0},\ldots,\lambda_{i_k})\prod_{\substack{i\in\{0,\ldots,m\}\\
\epsilon_i\neq0,~i\neq j_0}}u^{-1}(\lambda_i)\\
&=(gu^{k-q+1})^{[k]}(\lambda_{i_0},\ldots,\lambda_{i_k})\prod_{\substack{i\in\{0,\ldots,m\}\\
\epsilon_i\neq0}}u^{-1}(\lambda_i)\\
\nonumber
&\quad-(gu^{k-q})^{[k- 1]}(\lambda_{i_0},\ldots,\lambda_{j_0-1},\lambda_{j_0+1},\ldots,\lambda_{i_k})\prod_{\substack{i\in\{0,\ldots,m\}\\\epsilon_i\neq0}}u^{-1}(\lambda_i).
\end{align}
Combining \eqref{gme1} and \eqref{gme2} yields
\begin{align*}
&g^{[m]}(\lambda_0,\ldots,\lambda_m)\\
&=\sum_{k=\max(0,q)}^{m}(-1)^{m-k}\sum_{\substack{i_0,\ldots,i_k\in\{0,\ldots,m\}\\ i_0<\cdots<i_k\\ \{j_0\}\cup\epsilon^{-1}(\{0\})\subseteq\{i_0,\ldots,i_k\}}}(gu^{k-q+1})^{[k]}(\lambda_{i_0},\ldots,\lambda_{i_k})\prod_{\substack{i\in\{0,\ldots,m\}\\\epsilon_i\neq0}}u^{-1}(\lambda_i)\nonumber\\
&\quad-\sum_{k=\max(1,q)}^{m}(-1)^{m-k}\sum_{\substack{i_0,\ldots,i_{k-1}\in\{0,\ldots,m\}\\ i_0<\cdots<i_{k-1}\\ \epsilon^{-1}(\{0\})\subseteq\{i_0,\ldots,i_{k-1}\}\\j_0\notin\{i_0,\ldots,i_{k-1}\}}}(gu^{k-q})^{[k-1]}(\lambda_{i_0},\ldots,\lambda_{i_{k-1}})\prod_{\substack{i\in\{0,\ldots,m\}\\\epsilon_i\neq0}}u^{-1}(\lambda_i).\nonumber
\end{align*}

If $q=0$, then the summation in the first term above starts with $k=\max(0,q)=\max(0,q-1)$.
We can let the index $k$ in the first term run from $k=\max(0,q-1)$ to $m$ because if $q\geq1$ and $k=q-1$, then the condition $\{j_0\}\cup\epsilon^{-1}(\{0\})\subseteq\{i_0,\ldots,i_k\}$ cannot hold, implying that the sum over $i_0,\ldots,i_k\in\{0,\ldots,m\}$ such that $i_0<\cdots<i_k$ and $\{j_0\}\cup\epsilon^{-1}(\{0\})\subseteq\{i_0,\ldots,i_k\}$ is void. Similarly, we can let the index $k$ in the second term run from $k=\max(1,q)$ to $m+1$ because the sum over $i_0,\ldots,i_{k-1}$ is void in the case when $k=m+1$. Shifting the index $k$ in the second term by one, so that the sum runs from $k=\max(0,q-1)$ to $m$, yields
\begin{align*}
&g^{[m]}(\lambda_0,\ldots,\lambda_m)\\
&=\sum_{k=\max(0,q-1)}^{m}(-1)^{m-k}\sum_{\substack{i_0,\ldots,i_k\in\{0,\ldots,m\}\\ i_0<\cdots<i_k\\ \{j_0\}\cup\epsilon^{-1}(\{0\})\subseteq\{i_0,\ldots,i_k\}}}(gu^{k-q+1})^{[k]}(\lambda_{i_0},\ldots,\lambda_{i_k})\prod_{\substack{i\in\{0,\ldots,m\}\\\epsilon_i\neq0}}u^{-1}(\lambda_i)\nonumber\\
&\quad+\sum_{k=\max(0,q-1)}^{m}(-1)^{m-k}\sum_{\substack{i_0,\ldots,i_{k}\in\{0,\ldots,m\}\\ i_0<\cdots<i_{k}\\ \epsilon^{-1}(\{0\})\subseteq\{i_0,\ldots,i_{k}\}\\j_0\notin\{i_0,\ldots,i_{k}\}}}(gu^{k-q+1})^{[k]}(\lambda_{i_0},\ldots,\lambda_{i_{k}})\prod_{\substack{i\in\{0,\ldots,m\}\\\epsilon_i\neq0}}u^{-1}(\lambda_i).\nonumber
\end{align*}
%We see that the two terms are exactly the same, except for the condition $j_0\in\{i_1,\ldots,i_k\}$ %in the first term and the condition $j_0\notin\{i_1,\ldots,i_k\}$ in the second term.
Combining the two sums above gives \eqref{eq:cov for symbols}, completing our induction argument.

The result \eqref{covii} of the theorem follows from \eqref{eq:cov for symbols} by applying Theorem \ref{thm:iptp} to all $g\in C^m$ such that $\widehat{g^{(m)}},\widehat{(gu^{k-q+1})^{(k)}}\in L^1(\R)$, $k=\max(0,q-1),\dots,m$, and noting that the respective resolvents can be placed on either side of the comma. The result \eqref{covi} of the theorem follows as a special case of \eqref{covii}.
\end{proof}

We will apply the above formula to create perturbations of the form $(H-i)^{-1}V(H-i)^{-1}$ by positioning resolvents $(H_j-i)^{-1}$ along the arguments of the multiple operator integral.

The following lemma  will be utilized to obtain useful analytical properties of multiple operator integrals.
\begin{cor}
\label{Vbarin MOI}
Let $n\in\N$, let $H_0,\dots,H_{2n-1}$ be self-adjoint in $\H$, and let $V_1,\dots,V_{2n}\in\mB(\H)$. For all $f\in C^{2n-1}$ satisfying $\widehat{f^{(2n-1)}},\widehat{(fu^{p+1})^{(p)}},\in L^1(\R)$, $p=0,\dots,2n-1$, we have that
\begin{align}\label{T_f^[2n-1]}
&T^{H_0,\ldots,H_{2n-1}}_{f^{[2n-1]}}(V_1,\ldots,V_{2n-1})\nonumber\\
&=\sum_{p=0}^{2n-1}\sum_{0\leq i_0<\cdots<i_p\leq 2n-1}(-1)^{p+1}\,
\overline{V}_{0,i_0}T^{H_{ i_0},\ldots,H_{i_p}}_{(fu^{p+1})^{[p]}}
(\overline{V}_{i_0,i_1},\ldots,\overline{V}_{i_{p-1},i_p})\overline{V}_{i_p,2n-1}.
\end{align}
Moreover, for all $f\in C^{2n}$ satisfying $\widehat{f^{(2n)}},\widehat{(fu^p)^{(p)}}\in L^1(\R)$, $p=0,\ldots,2n$, we have that
\begin{align}\label{T_f^[2n]}
&T^{H_0,\ldots,H_{2n}}_{f^{[2n]}}(V_1,\ldots,V_{2n})\nonumber\\
&=\sum_{p=0}^{2n}\sum_{0\leq i_0<\ldots<i_{p-1}\le 2n-1}(-1)^p\,\overline{V}_{0,i_0}T^{H_{i_0},\ldots,H_{i_{p-1}},H_{2n}}_{(fu^p)^{[p]}}
(\overline{V}_{i_0,i_1},\ldots,\overline{V}_{{i_{p-1}},2n}),
\end{align}
in which the $(p=0)$th term is understood as
$\overline{V}_{0,2n}T^{H_{ 2n}}_{f^{[0]}}()=\overline{V}_{0,2n}f(H_{2n})$.
\end{cor}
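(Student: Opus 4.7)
The plan is to derive both formulas as instantiations of Theorem \ref{thm:generalized change of variables} for two carefully chosen patterns $\epsilon$ that make $\check V^\epsilon_j$ coincide with the operators $\oV_j$ of \eqref{bardef}. From \eqref{hatU}, $\check V^\epsilon_j=(H_{j-1}-i)^{-1}V_j(H_j-i)^{-1}$ exactly when $\epsilon_{j-1}=\text{R}$ and $\epsilon_j=\text{L}$, whereas $\check V^\epsilon_j=V_j$ exactly when $\epsilon_{j-1}\ne\text{R}$ and $\epsilon_j\ne\text{L}$. Since $\oV_j$ has the former form for odd $j$ and the latter for even $j$, the natural choice is to alternate $\epsilon_j=\text{R}$ for even $j$ with $\epsilon_j=\text{L}$ for odd $j$, possibly adjusted at the endpoints to meet the constraints $\epsilon_0\ne\text{L}$ and $\epsilon_m\ne\text{R}$.

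For \eqref{T_f^[2n-1]}, put $m=2n-1$ and take $\epsilon_j=\text{R}$ for even $j$ and $\epsilon_j=\text{L}$ for odd $j$. Then $\epsilon_0=\text{R}$, $\epsilon_m=\text{L}$, and $q:=|\epsilon^{-1}(\{0\})|=0$, so part \eqref{covi} of Theorem \ref{thm:generalized change of variables} applies under exactly the given hypotheses on $f$. A direct check of \eqref{hatU} gives $\check V^\epsilon_j=\oV_j$ for every $j$, hence $\check V^\epsilon_{i,j}=\oV_{i,j}$ by \eqref{checkUij}. Since $m$ is odd, $(-1)^{m-k}=(-1)^{k+1}$, and relabelling $k$ as $p$ reproduces \eqref{T_f^[2n-1]}.

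For \eqref{T_f^[2n]}, with $m=2n$ the pure alternating pattern would force $\epsilon_m=\text{R}$, which is forbidden. We therefore take $\epsilon_j=\text{R}$ for even $j<2n$, $\epsilon_j=\text{L}$ for odd $j$, and $\epsilon_{2n}=0$, giving $\epsilon_0=\text{R}$, $\epsilon_m=0$, and $q:=|\epsilon^{-1}(\{0\})|=1$. Part \eqref{covii} of Theorem \ref{thm:generalized change of variables} then applies under exactly the stated hypotheses, since $(fu^{k-q+1})^{(k)}=(fu^k)^{(k)}$ for $k=0,\ldots,2n$. The same inspection of \eqref{hatU} yields $\check V^\epsilon_j=\oV_j$ for $j<2n$; at $j=2n$ one has $\epsilon_{2n-1}=\text{L}\ne\text{R}$ and $\epsilon_{2n}=0\ne\text{L}$, so $\check V^\epsilon_{2n}=V_{2n}=\oV_{2n}$ as well. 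The constraint $\epsilon^{-1}(\{0\})\subseteq\{i_0,\ldots,i_k\}$ forces $i_k=2n$, making the suffix $\check V^\epsilon_{i_k,m}=I$ vanish; the remaining data matches \eqref{T_f^[2n]} after relabelling $k$ as $p$, with the $p=0$ term reducing to $\oV_{0,2n}f(H_{2n})$ by the convention $T^{H_{2n}}_{f^{[0]}}()=f(H_{2n})$. The main care required is in this endpoint adjustment for the even case, which is what necessitates the general part \eqref{covii} of Theorem \ref{thm:generalized change of variables} in place of the simpler part \eqref{covi}.
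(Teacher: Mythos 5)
Your proposal is correct and matches the paper's own proof: both formulas are obtained from Theorem \ref{thm:generalized change of variables} with the alternating pattern $\epsilon=(\textnormal{R},\textnormal{L},\ldots,\textnormal{R},\textnormal{L})$ (so $q=0$, part \eqref{covi}) in the odd case and $\epsilon=(\textnormal{R},\textnormal{L},\ldots,\textnormal{R},\textnormal{L},0)$ (so $q=1$, part \eqref{covii}, forcing $i_k=2n$) in the even case. Your write-up simply spells out the verification of $\check V^\epsilon_j=\oV_j$ and the sign bookkeeping that the paper leaves implicit.
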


\begin{proof}
Both statements are consequences of Theorem \ref{thm:generalized change of variables}.
	
For the first statement, we set $m=2n-1$ and
	$\epsilon=(\textnormal{R},\textnormal{L},\ldots,\textnormal{R},\textnormal{L})\in\{\textnormal{L},0,\textnormal{R}\}^{2n}$. This implies $q=0$ and therefore the statement follows by taking $p=k$.

For the second statement, we set $m=2n$ and $\epsilon=(\textnormal{R},\textnormal{L},\ldots,\textnormal{R},\textnormal{L},0)\in\{\textnormal{L},0,\textnormal{R}\}^{2n+1}$. In this case, $q=1$, $\epsilon^{-1}(\{0\})=\{2n\}=\{i_k\}$, and the statement follows by taking $p=k$.
\end{proof}

%\begin{cor} Let $n\in\N$, let $H$ be a self-adjoint operator in $\H$, and let $V\in\mB(\H)_\text{sa}$. Let $\oV_{i,j}$ be as in \eqref{bardef}, in which we set $H_j=H$ and $V_j=V$ for every $j$. Then, for $f\in C_c^{2n}$,
%\begin{align}
%\label{r2n-1viap}
%R_{2n-1,H,f}(V)=\sum_{p=0}^{2n-2}(-1)^p \tilde R^p_{2n-1,H,f}(V),
%\end{align}
%where
%\begin{align}\label{prem p=0}
%\tilde R^0_{2n-1,H,f}(V)=\oV_{0,2n-2}(f(H+V)-f(H))
%\end{align}
%and
%\begin{align}
%\tilde R^p_{2n-1,H,f}(V)&=\sum_{0\leq i_1<\cdots<i_p<2n-2}\overline{V}_{0,i_1}\Big(T^{H,\ldots,H,H+V}_{(fu^p)^{[p]}}(\overline{V}_{i_1,i_2},\ldots,\overline{V}_{i_p,2n-2})\nonumber\\
%&\qquad\qquad-T^{H,\ldots,H}_{(fu^p)^{[p]}}(\overline{V}_{i_1,i_2},\ldots,\overline{V}_{i_p,2n-2})\Big)
%\quad\text{for }p\neq 0.
%\label{prem}
%\end{align}
%\end{cor}
%
%\begin{proof}
%By induction on $n\in\N$ and repeated application of Theorem \ref{perturbation}, we obtain
%$$R_{2n-1,H,f}(V)=T^{H,\ldots,H,H+V}_{f^{[2n-2]}}(V,\ldots,V)-T^{H,\ldots,H}_{f^{[2n-2]}}(V,\ldots,V).$$
%Application of Corollary \ref{Vbarin MOI} completes the proof.
%\end{proof}

\paragraph{Change of variables to estimate MOI in terms of the sup-norm of its symbol.}\label{sct:extra groundwork}
In the remainder of this section we fix $m\in\N$, self-adjoint operators $H_0,\ldots,H_m$ in $\H$, and operators $U_0,\ldots,U_m\in\mB(\H)$ which typically are perturbations of the form $\oV_{i,j}$ occurring as arguments of multiple operator integrals on the right-hand side of \eqref{T_f^[2n-1]} or \eqref{T_f^[2n]}.

If we also assume for simplicity that $V_1=\ldots=V_n=:V$ and that one of the (equivalently, both) conditions of Lemma \ref{lem:resolvent comparable} hold, we find that $U_j\in\S^{\alpha_j}$ for $\alpha_0,\ldots,\alpha_m\in[1,\infty]$ and $U_0U_1\cdots U_m\in\S^1$.
However, it might happen that $\alpha_j=\infty$, that is, $U_j$ is not compact for some values of $j$. It is therefore not possible to apply Lemma \ref{lem43} in order to bound an expression of the form $\Tr(U_0T_{g^{[m]}}(U_1,\ldots,U_m))$
in terms of $\supnorm{g^{(m)}}$ and obtain an associated integral formula.
%including an integral representation,
Before we can apply Lemma \ref{lem43}, we need to apply Theorem \ref{thm:generalized change of variables} one more time. The latter two applications are encapsulated in Theorem~\ref{thm:Extra expansion step}, which forms the second stage of our change of variables method. The latter theorem posits a mild assumption on the set of indices $j$ for which $\alpha_j=\infty$, which is satisfied by the applications considered in this paper.

In the next auxiliary lemma we perform a change of variables that allows to obtain estimates for traces of
multilinear operator integrals on tuples of noncompact perturbations in terms of the sup-norm of the symbol.

\begin{lem}\label{lem:U's}
Let $m\in\N$, $U_0,\ldots,U_m\in\mB(\H)$ and let $\alpha_0\in[1,\infty]$ and $\alpha_1,\ldots,\alpha_m\in[1,\infty)$ satisfy $\tfrac{1}{\alpha_0}+\ldots+\tfrac{1}{\alpha_m}=1$. Assume that either $U_0=I$ and $H_0=H_m$, or $\alpha_0<\infty$. Let $\epsilon=(\epsilon_0,\ldots,\epsilon_m)\in\{\textnormal{L},\textnormal{R}\}^{m+1}$ be such that $\epsilon_0=\textnormal{R}$, $\epsilon_m=\textnormal{L}$, and $\check{U}^\epsilon_0\in\S^{\alpha_0},\ldots,\check U^\epsilon_m\in\S^{\alpha_m}$.
Then, there exists a complex Radon measure $\mu$ satisfying
\begin{align*}
\norm{\mu}\leq c_{m,\alpha}\nrm{\check{U}_0^\epsilon}{\alpha_0}\cdots\nrm{\check{U}_m^{\epsilon}}{\alpha_m},
\end{align*}
for a constant $c_{m,\alpha}>0$, and
\begin{align*}
	\Tr(U_0T_{g^{[m]}}^{H_0,\ldots,H_m}(U_1,\ldots,U_m))=\int g^{(m)}u^{m+2}d\mu,
\end{align*}
for all $g\in C^m$ satisfying $\widehat{g^{(m)}}\in L^1$, $\widehat{(gu^{k+1})^{(k)}}\in L^1$, $k=0,\ldots,m$, $g^{(k)}u^{k+1}\in C_0$ for $k=0,\dots,m-1,$ and $g^{(m)}u^m\in L^1$. In particular, the result holds for all $g\in \W_{m+2}^m$.
%\begin{align}
%\label{eq:U's SSM}
%|\Tr(U_0T^{H_0,\ldots,H_m}_{g^{[m]}}(U_1,\ldots,U_m))|
%\le c_{m,\alpha}\|g^{(m)}u^{m+1}\|_\infty
%\nrm{\check{U}_0^\epsilon}{\alpha_0}\cdots\nrm{\check{U}_m^{\epsilon}}{\alpha_m},
%\end{align}
%for all $g\in\mW_m$ satisfying $gu^k\in C_0^{k-1}$ for all $k=1,\dots,m+1$, in particular,  for all $g\in C_c^{m+1}(\R)$.
\end{lem}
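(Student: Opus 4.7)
The plan is to chain Theorem \ref{thm:generalized change of variables} with Lemma \ref{lem43} and Lemma \ref{lem:partial integration}. I begin by applying Theorem \ref{thm:generalized change of variables}\eqref{covi} (with $q=0$, since $\epsilon\in\{\textnormal{L},\textnormal{R}\}^{m+1}$) to expand
\begin{align*}
T^{H_0,\ldots,H_m}_{g^{[m]}}(U_1,\ldots,U_m)
=\sum_{k=0}^{m}(-1)^{m-k}\sum_{0\le i_0<\cdots<i_k\le m}
\check U^\epsilon_{0,i_0}\,T^{H_{i_0},\ldots,H_{i_k}}_{(gu^{k+1})^{[k]}}(\check U^\epsilon_{i_0,i_1},\ldots,\check U^\epsilon_{i_{k-1},i_k})\,\check U^\epsilon_{i_k,m}.
\end{align*}
Multiplying by $U_0$ on the left, taking the trace, and permuting cyclically, I reduce term-by-term to traces of the form $\Tr\bigl(B_1\,T^{H_{i_0},\ldots,H_{i_k}}_{(gu^{k+1})^{[k]}}(\check U^\epsilon_{i_0,i_1},\ldots,\check U^\epsilon_{i_{k-1},i_k})\bigr)$ with $B_1:=\check U^\epsilon_{i_k,m}U_0\check U^\epsilon_{0,i_0}$.

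Next, I invoke Lemma \ref{lem43}, splitting on the dichotomy in the hypothesis. If $\alpha_0<\infty$, then by H\"older's inequality $B_1\in\S^{\beta_1}$ with $\beta_1<\infty$, and together with the H\"older exponents of the $\check U^\epsilon_{i_{\ell-1},i_\ell}$ they sum to $1$; Lemma \ref{lem43}\eqref{item:tr bound ii} produces a complex Radon measure $\mu^{(k)}_{i_*}$ with
\begin{align*}
\Tr(\cdots)=\int (gu^{k+1})^{(k)}\,d\mu^{(k)}_{i_*}\,,\qquad \|\mu^{(k)}_{i_*}\|\le c_\alpha\,\textstyle\prod_j\|\check U^\epsilon_j\|_{\alpha_j}.
\end{align*}
In the alternative case $U_0=I$, $H_0=H_m$ (with $\alpha_0$ possibly infinite), the same argument applies whenever $(i_0,i_k)\ne(0,m)$, because then $B_1=\check U^\epsilon_{i_k,m}\check U^\epsilon_{0,i_0}$ still lies in some $\S^{\beta_1}$ with $\beta_1<\infty$; in the sole remaining case $i_0=0$, $i_k=m$, the boundary factors $\check U^\epsilon_{0,0}$ and $\check U^\epsilon_{m,m}$ equal $I$, the trace collapses to $\Tr\bigl(T^{H_0,H_{i_1},\ldots,H_{i_{k-1}},H_m}_{(gu^{k+1})^{[k]}}(\cdot)\bigr)$ with matching endpoints $H_0=H_m$, and Lemma \ref{lem43}\eqref{item:tr bound i} delivers an analogous integral representation with the same type of bound.

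Finally, I convert each integral to the target form. The Leibniz rule writes $(gu^{k+1})^{(k)}$ as a linear combination of terms $g^{(j)}u^{j+1}$, $j=0,\ldots,k$. For each such term I apply Lemma \ref{lem:partial integration} with $n=j$, with $m$ there equal to $j+1$, with $k$ there equal to $m-j$, and with $\epsilon=1$, producing a measure $\tilde\mu$ with $\|\tilde\mu\|\le\|u^{-2}\|_1\|\mu^{(k)}_{i_*}\|$ and $\int g^{(j)}u^{j+1}\,d\mu^{(k)}_{i_*}=\int g^{(m)}u^{m+2}\,d\tilde\mu$. Summing the finitely many resulting measures over $k$, over the tuples $i_*$, and over the Leibniz coefficients produces the single measure $\mu$ required by the statement. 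The regularity hypotheses on $g$ are exactly tailored to this chain: $\widehat{(gu^{k+1})^{(k)}}\in L^1$ is used to enable Theorem \ref{thm:generalized change of variables} and Lemma \ref{lem43}, while $g^{(k)}u^{k+1}\in C_0$ for $k<m$ and $g^{(m)}u^m\in L^1$ are the prerequisites for the iterated partial integrations, and all of them follow automatically from $g\in\W^m_{m+2}$ by Proposition \ref{prop:Wsn}.

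The delicate point is accommodating the noncompact case $\alpha_0=\infty$: then $B_1$ need not lie in any Schatten ideal of finite exponent, so Lemma \ref{lem43}\eqref{item:tr bound ii} cannot be used in the extremal subcase $(i_0,i_k)=(0,m)$, and the supplementary assumption $U_0=I$, $H_0=H_m$ is precisely what is engineered to divert that boundary term into the matching-endpoints version Lemma \ref{lem43}\eqref{item:tr bound i}. Once this case split is set up, the rest is a routine bookkeeping exercise of tracking constants and recombining H\"older exponents across the change of variables and the iterated partial integrations.
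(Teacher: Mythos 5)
Your proposal is correct and follows essentially the same route as the paper's proof: expand via Theorem \ref{thm:generalized change of variables}\eqref{covi}, use cyclicity to form the combined leading factor, split on whether its Hölder exponent is finite (so that Lemma \ref{lem43}\eqref{item:tr bound ii} applies) or the extremal tuple $(i_0,i_k)=(0,m)$ forces the matching-endpoint case of Lemma \ref{lem43}\eqref{item:tr bound i}, and then convert $(gu^{k+1})^{(k)}$ to $g^{(m)}u^{m+2}$ by the Leibniz rule and Lemma \ref{lem:partial integration}. The parameter choices in the partial-integration step and the verification that $\W^m_{m+2}$ supplies all the needed regularity match the paper's argument.
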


\begin{proof}
Applying Theorem \ref{thm:generalized change of variables}\eqref{covi} yields
\begin{align}
\label{lem:U's0}
\nonumber
&\Tr\big(U_0T^{H_0,\ldots,H_m}_{g^{[m]}}(U_1,\ldots,U_m)\big)\\
&=\sum_{k=0}^m(-1)^{m-k}\sum_{0\leq i_0<\cdots<i_k \leq m} \Tr\big(\check{U}^\epsilon_{i_k,m}\check{U}^\epsilon_{-1,i_0}
T^{H_{i_0},\ldots,H_{i_k}}_{(gu^{k+1})^{[k]}}(\check{U}^\epsilon_{i_0,i_1},\ldots,
\check{U}_{i_{k-1},i_k}^\epsilon)\big)
\end{align}
for all $g\in\W_{m+1}^m$, where $\check{U}^\epsilon_{i,j}$ is given by \eqref{checkUij}.

Fix an arbitrary sequence $0\leq i_0<\cdots<i_k \leq m$. We define $\tilde\alpha_0=(\frac{1}{\alpha_{i_{k}+1}}+\ldots+\frac{1}{\alpha_{m}}+\frac{1}{\alpha_0}+\ldots
+\frac{1}{\alpha_{i_0}})^{-1}$ and $\tilde\alpha_l=\big(\frac{1}{\alpha_{i_{l-1}+1}}+\ldots+\frac{1}{\alpha_{i_l}}\big)^{-1}$ for all $l=1,\ldots,k$. We obtain $\tilde\alpha_0\in[1,\infty]$, $\tilde\alpha_1,\ldots,\tilde\alpha_k\in[1,\infty)$ and $\tfrac{1}{\tilde\alpha_0}+\ldots+\tfrac{1}{\tilde\alpha_k}=1$.

Furthermore, we have either $\tilde\alpha_0<\infty$ (allowing us to apply Lemma \ref{lem43}\eqref{item:tr bound ii}) or $i_0=0$, $i_k=m$ and $\tilde\alpha_0=\alpha_0=\infty$, in which case we have $\check{U}^\epsilon_{i_k,m}\check{U}^\epsilon_{-1,i_0}=\check U^\epsilon_0=U_0=I,$ and $H_{i_0}=H_0=H_m=H_{i_k}$ (allowing us to apply Lemma \ref{lem43}\eqref{item:tr bound i}).
By \eqref{lem:U's0} and Lemma \ref{lem43}, there exist complex Radon measures $\mu_0,\ldots,\mu_m$ such that
\begin{align}
\label{lem:U's4}
\Tr(U_0T^{H_0,\ldots,H_m}_{g^{[m]}}(U_1,\ldots,U_m))
=\sum_{k=0}^m\int (gu^{k+1})^{(k)}d\mu_k
\end{align}
for all $g\in \W_{m+1}^m$ and
\begin{align*}
\|\mu_k\|\le{\tilde c_{m,\tilde\alpha}}\nrm{\check{U}_0^\epsilon}{\alpha_0}
\cdots\nrm{\check{U}_m^\epsilon}{\alpha_m}.
\end{align*}

By the Leibniz differentiation rule,
\begin{align}
\label{lem:U's2}
(gu^{k+1})^{(k)}=\sum_{l=0}^k c_{k,l}g^{(l)}u^{l+1}
\end{align}
for some numbers $c_{k,l}$. Applying \eqref{lem:U's4} and \eqref{lem:U's2} yields
\begin{align*}
\Tr(U_0T^{H_0,\ldots,H_m}_{g^{[m]}}(U_1,\ldots,U_m))=\sum_{l=0}^m\int g^{(l)}u^{l+1}d\tilde \mu_l
\end{align*}
for some measures $\tilde\mu_l$ with $\norm{\tilde\mu_l}\leq c_{m,\tilde\alpha}\nrm{\check{U}_0^\epsilon}{\alpha_0}
\cdots\nrm{\check{U}_m^\epsilon}{\alpha_m}$ and a constant $c_{m,\tilde\alpha}$.
Since $g^{(k)}u^{k+1}\in C_0$, $k=0,\dots,m-1$, and $g^{(m)}u^m\in L^1$, the result follows by Lemma \ref{lem:partial integration}.
\end{proof}

We now prove the main result of this section, thereby completing the second stage of the change of variables and its applications to estimation and integral representation of traces of multilinear operator integrals.

\begin{thm}\label{thm:Extra expansion step}
Let $n,m\in\N$ and $\alpha_0,\ldots,\alpha_m\in[1,\infty]$ satisfy $\tfrac{1}{\alpha_0}+\ldots+\tfrac{1}{\alpha_m}=1$. Let $U_j\in\S^{\alpha_j}$ and let $J=\{j\in\{ 1,\ldots,m\}: \alpha_j=\infty\}$. Assume that $|i-j|\geq2$ for all distinct $i,j\in J$, and that $(H_{j-1}-i)^{-1}U_j(H_j-i)^{-1}\in\S^n$ for all $j\in J$. Assume furthermore that either $U_0=I$ and $H_0=H_m$, or $\alpha_0<\infty$. Denote $r=\frac{|J|}{n+|J|}\in[0,1)$ and
\begin{align}
\label{pdef}
& p^{J}_{\alpha}(U_0,\ldots,U_m;H_0,\dots,H_m)\\
\nonumber
&=\norm{U_0}^r\cdots\norm{U_m}^r \prod_{j\in J}\nrm{(H_{j-1}-i)^{-1}U_j(H_j-i)^{-1}}{n}^{1-r}\prod_{j\in\{0,\ldots,m\}\setminus J}\nrm{U_j}{\alpha_j}^{1-r},
\end{align}
where $H_{-1}:=H_m$.
%We then have
%\begin{align*}
%|\Tr(U_0T^{H_0,\ldots,H_m}_{g^{[m]}}(U_1,\ldots,U_m))|
%\le c_{m,\alpha}\|g^{(m)}u^{m+1}\|_\infty\, p^{J}_{\alpha}(U_0,\ldots,U_m;H_0,\dots,H_m)
%\end{align*}
%for all $g\in\mW_m$ satisfying $gu^k\in C_0^{k-1}$ for all $k=1,\dots,m+1$, in particular, for all $g\in C_c^{m+1}(\R)$.
Then, there exist a constant $c_{m,\alpha}>0$ and a complex Radon measure $\mu$ satisfying
\begin{align*}
\norm{\mu}\leq c_{m,\alpha}\,p^{J}_{\alpha}(U_0,\ldots,U_m;H_0,\dots,H_m)
\end{align*}
and
\begin{align}\label{eq:integral formula in extra step}
	\Tr(U_0T_{g^{[m]}}^{H_0,\ldots,H_m}(U_1,\ldots,U_m))=\int g^{(m)}u^{m+2}d\mu
\end{align}
for all $g\in \W_{m+2}^m$. %in particular, for all $g\in C_c^{m+1}(\R)$.
\end{thm}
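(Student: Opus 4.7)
The plan is to reduce the problem to Lemma \ref{lem:U's} by means of Lemma \ref{hatUc2}, and then combine the resulting Schatten estimate with a standard complex interpolation inequality between $\S^n$ and the operator norm so that the factors collapse into $p^{J}_{\alpha}$.

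First, since the indices in $J$ are pairwise at least two apart, Lemma \ref{hatUc2} supplies an $\epsilon\in\{\textnormal{L},\textnormal{R}\}^{m+1}$ with $\epsilon_{0}=\textnormal{R}$, $\epsilon_{m}=\textnormal{L}$, and $\check{U}_{j}^{\epsilon}=(H_{j-1}-i)^{-1}U_{j}(H_{j}-i)^{-1}$ for every $j\in J$. For $j\notin J$ the operator $\check{U}_{j}^{\epsilon}$ is obtained from $U_{j}$ by multiplying by at most two resolvents, whence $\check U_j^\epsilon\in\S^{\alpha_j}$ with $\|\check{U}_{j}^{\epsilon}\|_{\alpha_{j}}\leq\|U_{j}\|_{\alpha_{j}}$ and $\|\check{U}_{j}^{\epsilon}\|\leq\|U_{j}\|$, while for $j\in J$ we have $\check{U}_{j}^{\epsilon}\in\S^{n}$ by hypothesis.

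Next, introduce the interpolated H\"older exponents
\begin{align*}
\tilde\alpha_{j}=n+|J|\quad(j\in J),\qquad \tilde\alpha_{j}=\alpha_{j}/(1-r)\quad(j\notin J),
\end{align*}
and verify that $\tilde\alpha_{1},\ldots,\tilde\alpha_{m}\in[1,\infty)$, that $\tilde\alpha_{0}<\infty$ whenever $\alpha_{0}<\infty$ (and that $U_0=I$, $H_0=H_m$ is preserved otherwise), and that $\sum_{j=0}^{m}1/\tilde\alpha_{j}=r+(1-r)\sum_{j\notin J}1/\alpha_{j}=r+(1-r)=1$. Apply the standard interpolation inequality $\|A\|_{p/s}\leq\|A\|_{p}^{s}\|A\|^{1-s}$ (valid for $s\in(0,1]$, $A\in\S^{p}$) with $s=1-r$ to each $\check{U}_{j}^{\epsilon}$: for $j\in J$ this yields
\begin{align*}
\|\check{U}_{j}^{\epsilon}\|_{\tilde\alpha_{j}}\leq\|(H_{j-1}-i)^{-1}U_{j}(H_{j}-i)^{-1}\|_{n}^{1-r}\,\|U_{j}\|^{r},
\end{align*}
and for $j\notin J$ it yields $\|\check{U}_{j}^{\epsilon}\|_{\tilde\alpha_{j}}\leq\|U_{j}\|_{\alpha_{j}}^{1-r}\,\|U_{j}\|^{r}$.

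Finally, invoke Lemma \ref{lem:U's} with $\epsilon$ and the exponent tuple $(\tilde\alpha_{0},\ldots,\tilde\alpha_{m})$; its hypotheses hold by construction, so it produces a complex Radon measure $\mu$ satisfying $\|\mu\|\leq c_{m,\tilde\alpha}\prod_{j}\|\check{U}_{j}^{\epsilon}\|_{\tilde\alpha_{j}}$ together with the integral identity \eqref{eq:integral formula in extra step} for every $g\in\W_{m+2}^{m}$. Multiplying the interpolation bounds from the previous step reproduces $p^{J}_{\alpha}(U_{0},\ldots,U_{m};H_{0},\ldots,H_{m})$ exactly. The principal subtlety is the calibration of the interpolation parameter $r=|J|/(n+|J|)$: this is the unique value that simultaneously keeps $\tilde\alpha_{j}$ for $j\in J$ at or above $n$ (where the hypothesized Schatten bound is available) while preserving the H\"older identity $\sum 1/\tilde\alpha_{j}=1$, and it is precisely this choice that makes the resulting product match the definition of $p^{J}_{\alpha}$.
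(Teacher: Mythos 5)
Your proposal is correct and follows essentially the same route as the paper: reduce via Lemma \ref{hatUc2} to a choice of $\epsilon$ placing resolvents around the $U_j$ with $j\in J$, apply Lemma \ref{lem:U's} with the rescaled exponents (your $\tilde\alpha_j$ coincide with the paper's $\overline\alpha_j=t\alpha_j$, $t=1+|J|/n$), and bound the resulting product of Schatten norms by $p^J_\alpha$ via the inequality $\|A\|_{p/s}\le\|A\|_p^s\|A\|^{1-s}$, which the paper proves directly from singular values. No gaps.
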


\begin{proof}
Since $J$ has a minimal distance of at least 2, by Lemma \ref{hatUc2} we can fix $\epsilon\in\{\text{L},\text{R}\}^{m+1}$ such that $\epsilon_0=\textnormal{R}$, $\epsilon_m=\textnormal{L}$, and $\check{U}^\epsilon_j=(H_{j-1}-i)^{-1}U_j(H_j-i)^{-1}$ for all $j\in J$. Therefore, $\check U^\epsilon_j\in\S^{\overline{\alpha}_j}$ for all $j\in\{1,\ldots,m\}$, where
\begin{align*}
	\overline\alpha_j=\begin{cases}
	tn\quad&\text{if $j\in J$,}\\
	t\alpha_j&\text{if $j\in\{0,\ldots,m\}\setminus J$}
	\end{cases}
\end{align*}
and	$$t=1+\frac{|J|}{n}.$$
By definition, we have $\overline{\alpha}_1,\ldots,\overline{\alpha}_m\in[1,\infty)$ and $\frac{1}{\overline\alpha_0}+\ldots+\frac{1}{\overline\alpha_m}=\frac{|J|}{tn}+\frac1t=1$.
Since $0\notin J$, $\overline{\alpha}_0=t\alpha_0\in[1,\infty]$.
If $\overline{\alpha}_0=\infty$, then $\alpha_0=\infty$, so by assumption we have $U_0=I$ and $H_0=H_m$.
Hence, by Lemma \ref{lem:U's}, we obtain \eqref{eq:integral formula in extra step}, where a complex Radon measure $\mu$ satisfies
%\begin{align*}
%|\Tr(U_0T^{H_0,\ldots,H_m}_{g^{[m]}}(U_1,\ldots,U_m))|\leq
%\tilde c_{m,\overline\alpha}\|g^{(m)}u^{m+1}\|_\infty\nrm{\check{U}_0^\epsilon}
%{\overline\alpha_0}\cdots\nrm{\check{U}_m^{\epsilon}}{\overline\alpha_m}
%\end{align*}
%for all $g\in \W^m_{m+2}$ such that $gu^k\in C_0^{k-1}$ ($k=1,\ldots,m+1$).
\begin{align*}
\norm{\mu}\leq
c_{m,\alpha}\nrm{\check{U}_0^\epsilon}
{\overline\alpha_0}\cdots\nrm{\check{U}_m^{\epsilon}}{\overline\alpha_m}.
\end{align*}

To conclude the proof of Theorem \ref{thm:Extra expansion step}, we need to establish the bound
\begin{align}\label{eq:U epsilon bound to prove}
\nrm{\check{U}_0^\epsilon}{\overline\alpha_0}\cdots\nrm{\check{U}_m^{\epsilon}}{\overline\alpha_m}\leq p^J_{\alpha}(U_0,\ldots,U_m;H_0,\dots,H_m).
\end{align}
If $U_j=0$ for some $j\in\{0,\ldots,m\}$, then the bound \eqref{eq:U epsilon bound to prove} is trivial. Thus, we assume that $U_j\neq 0$ for all $j\in\{0,\ldots,m\}$.

For any $j\in\{0,\ldots,m\}$, assume that $\alpha_j<\infty$. Let $(\sigma_i)_{i=1}^\infty$ be the sequence of singular values of $U_j/\norm{U_j}$.
Since $\overline\alpha_j\geq\alpha_j\geq 1$ and $\sigma_i\leq 1$ for all $i\in\N$, we obtain
\begin{align}
\nrm{\frac{U_j}{\norm{U_j}}}{\overline\alpha_j}^{\overline\alpha_j}=\sum_{i\in\N} \sigma_i^{\overline\alpha_j}\leq\sum_{i\in\N}\sigma_i^{\alpha_j}
=\nrm{\frac{U_j}{\norm{U_j}}}{\alpha_j}^{\alpha_j}.
\end{align}
Therefore, $\nrm{U_j}{\overline\alpha_j}^{\overline\alpha_j}\leq \nrm{U_j}{\alpha_j}^{\alpha_j}\norm{U_j}^{\overline\alpha_j-\alpha_j}$, and hence,
\begin{align}
\label{Ugeps}
\nrm{\check U^\epsilon_j}{\overline\alpha_j}\leq\nrm{U_j}{\overline\alpha_j}
\leq\nrm{U_j}{\alpha_j}^{\frac{\alpha_j}{\overline\alpha_j}}\norm{U_j}^{1-\frac{\alpha_j}{\overline\alpha_j}}.
\end{align}
If $j\in\{0,\ldots,m\}\setminus J$, then it follows from \eqref{Ugeps} that
\begin{align}\label{eq:bounding U epsilons 1}
\nrm{\check U^\epsilon_j}{\overline\alpha_j}\leq \nrm{U_j}{\alpha_j}^{\frac1t}\norm{U_j}^{1-\frac{1}{t}}.
\end{align}

If $j\in J$, so that $\alpha_j=\infty$, then by a similar argument,
\begin{align}
\label{eq:bounding U epsilons 2}
\nonumber
\nrm{\check U_j^\epsilon}{\overline\alpha_j}&=\nrm{(H_{j-1}-i)^{-1}U_j(H_j-i)^{-1}}{tn}\\ &\leq\nrm{(H_{j-1}-1)^{-1}U_j(H_j-i)^{-1}}{n}^{1/t}\norm{U_j}^{1-\frac{1}{t}}.
\end{align}
Combining \eqref{eq:bounding U epsilons 1} and \eqref{eq:bounding U epsilons 2} yields \eqref{eq:U epsilon bound to prove}, completing the proof of the theorem.
\end{proof}

\section{Local trace formula by finite rank approximations}

In this section, we establish the existence of a locally integrable spectral shift function for a resolvent comparable perturbation.

\begin{thm}\label{thm:eta_n}
Let $n\in\N$, $n\ge 2$, and $H$ be self-adjoint in $\H$. Let $V\in \mB(\H)_{\text{sa}}$ be such that $(H-i)^{-1}V(H-i)^{-1}\in\S^n$. For each $m\in\{2n-1,2n\}$ there exists a function $\eta_m\in L^1_\loc$ such that
$$\Tr(R_{m,H,f}(V))=\int_\R f^{(m)}(x)\eta_m(x)\,dx$$
for all $f\in C^{m+1}_c(\R)$.
\end{thm}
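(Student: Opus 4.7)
The plan is to establish the trace formula via Schatten-class approximation of $V$, leveraging the change of variables machinery of the previous section to obtain uniform local control on the approximating spectral shift functions. First I would choose a sequence $V_k \in \mB(\H)_{\text{sa}} \cap \S^n$ (for instance, $V_k = P_k V P_k$ for finite-rank projections $P_k$ suitably adapted to the spectral resolution of $H$) satisfying $\|V_k\| \le \|V\|$, $V_k \to V$ strongly, and $(H-i)^{-1} V_k (H-i)^{-1} \to (H-i)^{-1} V (H-i)^{-1}$ in $\S^n$. Since each $V_k \in \S^n$, the Schatten-class trace formula of \cite[Theorem 5.1]{PSS13} (or \cite[Theorem 4.1]{vNS21}) provides a real-valued $\eta_{m,k} \in L^1_{\loc}$ such that $\Tr(R_{m, H, f}(V_k)) = \int f^{(m)} \eta_{m,k} \, dx$ for all $f \in C_c^{m+1}$.

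Next I would apply Corollary \ref{Vbarin MOI} to $R_{m, H, f}(V_k) = T^{H, H+V_k, H, \dots, H}_{f^{[m]}}(V_k, \dots, V_k)$ (see Theorem \ref{rm}) to expand it into a finite sum of MOI terms with resolvent-adjusted perturbations $\overline{V}_{i,j}$ as inputs. Using Lemma \ref{ric} together with the resolvent comparable hypothesis, the odd-indexed $\overline{V}_i$ lie in $\S^n$ while the even-indexed $\overline{V}_i$ equal $V_k$; in every MOI summand the noncompact inputs therefore occur at distance $\ge 2$, satisfying the hypotheses of Theorem \ref{thm:Extra expansion step}. Applying Theorem \ref{thm:Extra expansion step} to the trace of each summand yields, for $f \in C_c^{m+1}$ with support in $[-a, a]$, the uniform estimate
\begin{align*}
|\Tr(R_{m, H, f}(V_k))| \le c_{m, a}\, P\bigl(\|V\|,\; \|(H-i)^{-1} V (H-i)^{-1}\|_{n}\bigr)\, \|f^{(m)}\|_\infty,
\end{align*}
where $P$ is a polynomial expression, and consequently $\|\eta_{m, k}\|_{L^1([-a, a])} \le C_a$ uniformly in $k$ (after fixing the polynomial-of-degree-$\le m-1$ ambiguity of $\eta_{m, k}$).

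To upgrade the uniform bound to convergence, I would apply the same expansion to the difference $R_{m,H,f}(V_k) - R_{m,H,f}(V_{k'})$: the perturbation formula of Theorem \ref{perturbation} telescopes the change of the base points $H + V_k \leadsto H + V_{k'}$, while multilinearity of MOIs telescopes the change of the arguments $V_k \leadsto V_{k'}$. Each contribution then admits the same Corollary \ref{Vbarin MOI} + Theorem \ref{thm:Extra expansion step} treatment with at least one $V_k - V_{k'}$ factor among the $\overline{V}$'s, yielding $|\Tr(R_{m, H, f}(V_k) - R_{m, H, f}(V_{k'}))| \le c_{m,a} \|f^{(m)}\|_\infty \,\epsilon_{k, k'}$ with $\epsilon_{k, k'} \to 0$ as $k, k' \to \infty$. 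This shows that $(\eta_{m, k})$ is Cauchy in the relevant seminorms, and produces an $L^1_{\loc}$ limit $\eta_m$.

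Finally, Theorem \ref{rm}, Corollary \ref{Vbarin MOI}, and Lemma \ref{lem:sequential continuity} give $\Tr(R_{m, H, f}(V_k)) \to \Tr(R_{m, H, f}(V))$ for $f \in C_c^{m+1}$, and combining with the right-hand side convergence (from $L^1_{\loc}$ convergence of $\eta_{m, k}$ against the bounded $f^{(m)}$) yields the trace formula. The main obstacle is the Cauchy property of $(\eta_{m, k})$ in $L^1_{\loc}$: one must extract from the change of variables a difference estimate strong enough to upgrade the weak-$*$ convergence naively obtained from uniform $L^1$ bounds to genuine strong $L^1_{\loc}$ convergence, ensuring that the limit $\eta_m$ is an actual function rather than a Radon measure with a singular part.
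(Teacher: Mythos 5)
Your proposal is correct and follows essentially the same route as the paper: finite-rank approximants $V_k$ with $\S^n$-convergence of $(H-i)^{-1}V_k(H-i)^{-1}$ (Lemma \ref{lem:V_k compa conv}), the Schatten-class trace formula of \cite{PSS13} for each $V_k$ (Proposition \ref{rpvketapk}), and a telescoping expansion of $R_{m,H,f}(V_k)-R_{m,H,f}(V_l)$ treated by Corollary \ref{Vbarin MOI} and Theorem \ref{thm:Extra expansion step} to get the uniform-in-$f$ difference estimate (Proposition \ref{prop:Tr(R^p(V-W)) conv comp} and Corollary \ref{corRpVkVm}), which yields the Cauchy property of $(\eta_{m,k})$ in $L^1_{\loc}$ and the passage to the limit. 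The obstacle you flag at the end is exactly the one the paper resolves by that difference estimate, so no further comment is needed.
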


The result of Theorem \ref{thm:eta_n} is established via approximations by finite-rank perturbations. The major technical tool for the proof is provided by Proposition \ref{prop:Tr(R^p(V-W)) conv comp}.

First, we need the following approximation lemma extending the result of \cite[Lemma 4.8]{vNS21} to resolvent comparable perturbations.
\begin{lem}\label{lem:V_k compa conv}
Let $n\in\N$ and $H$ be self-adjoint in $\H$. Let $V\in \mB(\H)_{\text{sa}}$ be such that $(H-i)^{-1}V(H-i)^{-1}\in\S^n$. Then there exists a sequence $(V_k)_{k=1}^\infty$ of self-adjoint finite-rank operators converging strongly to $V$ such that $\|V_k\|\le\|V\|$ for every $k\in\N$,
	\begin{align}\label{eq:V_k comparable conv}
		\nrm{(H-i)^{-1}V_k(H-i)^{-1}-(H-i)^{-1}V(H-i)^{-1}}{n}\to0,
	\end{align}
	and
	\begin{align}\label{eq:V_k comparable bound}
		\nrm{(H-i)^{-1}V_k(H-i)^{-1}}{n}\leq 2\nrm{(H-i)^{-1}V(H-i)^{-1}}{n}.
	\end{align}
\end{lem}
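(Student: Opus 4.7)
The naive truncation $V_k=\pi_kV\pi_k$ by arbitrary finite-rank projections $\pi_k\to I$ strongly does not work here, because the commutators $[\pi_k,R]$, with $R:=(H-i)^{-1}$, cannot be controlled in $\S^n$. Instead, I proceed in two stages: first a spectral cutoff along $H$ (which \emph{does} commute with $R$), and then a finite-rank Schatten approximation, assembled by a diagonal argument.

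\textbf{Stage 1 (spectral cutoff).} Set $P_k:=E_H([-k,k])$ and $\tilde V_k:=P_kVP_k$. Then $\tilde V_k$ is self-adjoint with $\|\tilde V_k\|\le\|V\|$ and $\tilde V_k\to V$ strongly (by $P_k\to I$ strongly and $\|P_k\|\le 1$). The commutation $[P_k,R]=0$ gives $R\tilde V_kR=P_kRVRP_k$, whence Lemma \ref{lem:sequential continuity} yields $R\tilde V_kR\to RVR$ in $\S^n$ together with $\nrm{R\tilde V_kR}{n}\le\nrm{RVR}{n}$. The essential additional input is that $\tilde V_k\in\S^n$: since $H|_{P_k\H}$ has spectrum in $[-k,k]$, the operator $A_k:=(H-i)P_k$ extends to a bounded operator on $\H$ with $\|A_k\|\le\sqrt{k^2+1}$, and a direct calculation using $(H-i)R=I$, $\tilde V_kP_k=\tilde V_k$, and $[P_k,R]=0$ yields the sandwich identity
$$\tilde V_k=A_k\,(R\tilde V_kR)\,A_k,$$
exhibiting $\tilde V_k$ as a bounded-times-$\S^n$-times-bounded product.

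\textbf{Stage 2 and diagonalization.} The spectral decomposition of the self-adjoint $\tilde V_k\in\S^n$ furnishes finite-rank self-adjoint truncations $W_{k,j}$ with $\|W_{k,j}\|\le\|\tilde V_k\|\le\|V\|$ and $W_{k,j}\to\tilde V_k$ in $\S^n$ as $j\to\infty$; by boundedness of $R$ this gives $RW_{k,j}R\to R\tilde V_kR$ in $\S^n$. Choose $j(k)$ so that $\|W_{k,j(k)}-\tilde V_k\|<1/k$ and $\nrm{RW_{k,j(k)}R-R\tilde V_kR}{n}\le\min(1/k,\nrm{RVR}{n})$, and set $V_k:=W_{k,j(k)}$. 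A triangle inequality combined with Stage 1 then yields $V_k\to V$ strongly and \eqref{eq:V_k comparable conv}, while
$$\nrm{RV_kR}{n}\le\nrm{RV_kR-R\tilde V_kR}{n}+\nrm{R\tilde V_kR}{n}\le\nrm{RVR}{n}+\nrm{RVR}{n}=2\nrm{RVR}{n}$$
gives \eqref{eq:V_k comparable bound}. (If $RVR=0$, then $V=0$ on each $P_k\H$ and $V_k=0$ suffices.) The only nontrivial step of the argument is the sandwich identity proving $\tilde V_k\in\S^n$, which hinges critically on $P_k$ being a \emph{spectral} projection of $H$ rather than a projection onto a generic finite-dimensional subspace.
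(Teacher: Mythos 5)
Your proof is correct and follows essentially the same two-stage route as the paper: spectral cutoff $P_kVP_k$ with $P_k=E_H$ of a bounded interval, a conjugation identity exploiting $[P_k,(H-i)^{-1}]=0$ and the boundedness/invertibility of $H-i$ on $P_k\H$ to show $P_kVP_k\in\S^n$, and then finite-rank spectral truncation of $P_kVP_k$ with a diagonal choice of indices. Your sandwich identity $\tilde V_k=A_k(R\tilde V_kR)A_k$ is just a repackaging of the paper's identity involving $P_k(H-i)^{-1}+Q_k$, and your explicit choice forcing $\nrm{RW_{k,j(k)}R-R\tilde V_kR}{n}\le\nrm{RVR}{n}$ even gives \eqref{eq:V_k comparable bound} for all $k$ rather than only along a tail.
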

\begin{proof}
For every $k\in\N$, denote $P_k=E_H((-k,k))$ and $Q_k=1-P_k$. By the orthogonality $Q_kP_k=P_kQ_k=0$, and the fact that $P_k$ commutes with $(H-i)^{-1}$, we have
$$\left(P_k(H-i)^{-1}+Q_k\right)P_kVP_k\left(P_k(H-i)^{-1}+Q_k\right)=P_k(H-i)^{-1}V(H-i)^{-1}P_k\in\S^n.$$
Since $P_k(H-i)^{-1}+Q_k$ is invertible by the functional calculus, we have $$P_kVP_k\in\S^n.$$
For any $k$, the spectral theorem yields a sequence $(E_l)_{l=1}^\infty$ of finite-rank projections such that $[E_l,P_kVP_k]=0$ for all $l\in\N$ and $\nrm{E_lP_kVP_k-P_kVP_k}{n}\to 0$ as $l\to\infty$. We fix a subsequence $(E_{l_k})_{k=1}^\infty$ such that
		$$\nrm{E_{l_k}P_kVP_k-P_kVP_k}{n}<1/k.$$
The operator $V_k=E_{l_k}P_kVP_k$ is self-adjoint and converges strongly to $V$ by the fact that both $E_{l_k}P_kVP_k-P_kVP_k\to0$ and $P_kVP_k\to V$ strongly. Moreover,
	\begin{align}
		&\nrm{(H-i)^{-1}V_k(H-i)^{-1}-(H-i)^{-1}V(H-i)^{-1}}{n}\nonumber\\
		&\leq \nrm{(H-i)^{-1}V_k(H-i)^{-1}-(H-i)^{-1}P_kVP_k(H-i)^{-1}}{n}\nonumber\\
		&\quad+\nrm{(H-i)^{-1}P_kVP_k(H-i)^{-1}-(H-i)^{-1}V(H-i)^{-1}}{n}\nonumber\\
		&\leq\norm{(H-i)^{-1}}\nrm{E_{l_k}P_kVP_k-P_kVP_k}{n}\norm{(H-i)^{-1}}\nonumber\\
		&\quad+\nrm{P_k(H-i)^{-1}V(H-i)^{-1}P_k-(H-i)^{-1}V(H-i)^{-1}}{n}\to0.
	\end{align}
	We therefore obtain \eqref{eq:V_k comparable conv}. By the reverse triangle equality, \eqref{eq:V_k comparable conv} implies
	\begin{align}
	\nrm{(H-i)^{-1}V_k(H-i)^{-1}}{n}\to\nrm{(H-i)^{-1}V(H-i)^{-1}}{n},
	\end{align}
	and we therefore also obtain \eqref{eq:V_k comparable bound} by passing to the tail of the sequence $(V_k)_{k=1}^\infty$.
\end{proof}

For future reference, we state a quick corollary of the properties of the sequence $(V_k)_{k=1}^\infty$.

\begin{lem}\label{lem:V_k extra property}
Let $n\in\N$ and $H$ be a self-adjoint operator in $\H$. Let $V\in \mB(\H)_{\text{sa}}$ be such that $(H-i)^{-1}V(H-i)^{-1}\in\S^n$. For a sequence $(V_k)_{k=1}^\infty$ given by Lemma \ref{lem:V_k compa conv}, we define $V^{(k)}$ as being either $V$, $V_k$, or $V-V_k$ for all $k$. Similarly, we define the operators $H_1^{(k)}$ and $H_2^{(k)}$ to either equal $H$, $H+V$, or $H+V_k$ for all $k$. (The three choices need to be made independently of $k$.) We then have
$$\lim^{\S^n}_{k\to\infty}(H_1^{(k)}-i)^{-1}V^{(k)}(H_2^{(k)}-i)^{-1}=\bigg(\lim^{\S^n}_{k\to\infty}(H_1^{(k)}-i)^{-1}\bigg)\bigg(\lim^{\textnormal{so*}}_{k\to\infty}V^{(k)}\bigg)\bigg(\lim^{\S^n}_{k\to\infty}(H_2^{(k)}-i)^{-1}\bigg),$$
where all the limits exist.
\end{lem}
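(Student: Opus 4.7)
The plan is to factor out the ``common'' resolvent $(H-i)^{-1}$ on each side of the triple product and reduce the convergence of the product to the already established $\S^n$-convergence of the middle factor $(H-i)^{-1}V^{(k)}(H-i)^{-1}$, then invoke joint continuity of multiplication from Lemma \ref{lem:sequential continuity}. Concretely, using Lemma \ref{ric}, for each of the three possible choices of $H_j^{(k)}\in\{H,H+V,H+V_k\}$ I would write
\begin{align*}
(H_1^{(k)}-i)^{-1}=A_1^{(k)}(H-i)^{-1},\qquad (H_2^{(k)}-i)^{-1}=(H-i)^{-1}B_2^{(k)},
\end{align*}
where $A_1^{(k)},B_2^{(k)}\in\mB(\H)$ are explicit: each equals $I$ when the corresponding operator equals $H$, $I-(H+V-i)^{-1}V$ (resp. $I-V(H+V-i)^{-1}$) when it equals $H+V$, and $I-(H+V_k-i)^{-1}V_k$ (resp. $I-V_k(H+V_k-i)^{-1}$) when it equals $H+V_k$. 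This rewrites the triple product as
$$A_1^{(k)}\cdot\bigl[(H-i)^{-1}V^{(k)}(H-i)^{-1}\bigr]\cdot B_2^{(k)},$$
isolating the Schatten-class middle factor.

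First I would establish so*-convergence of the three sequences $V^{(k)}$, $A_1^{(k)}$, $B_2^{(k)}$. The so*-convergence $V_k\to V$ follows from Lemma \ref{lem:V_k compa conv} together with self-adjointness of $V_k-V$, so $V^{(k)}$ converges in so* to $V^\infty\in\{V,0\}$ in all three allowed choices. Strong resolvent convergence $(H+V_k-i)^{-1}\to (H+V-i)^{-1}$ in so* then follows from the second resolvent identity
$$(H+V_k-i)^{-1}-(H+V-i)^{-1}=(H+V-i)^{-1}(V-V_k)(H+V_k-i)^{-1},$$
combined with the uniform bound $\|(H+V_k-i)^{-1}\|\le 1$ and the analogous identity with $\pm i$ swapped for the strong convergence of the adjoints. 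Applying Lemma \ref{lem:sequential continuity} with exponents $(\infty,\infty)\to\infty$ then gives so*-convergence of $A_1^{(k)},B_2^{(k)}$ to limits $A_1^\infty,B_2^\infty$ obtained by the same explicit formulas with $V_k$ replaced by $V$.

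Second I would verify $\S^n$-convergence of the middle factor $(H-i)^{-1}V^{(k)}(H-i)^{-1}$: when $V^{(k)}=V$ it is constant, when $V^{(k)}=V_k$ it is precisely \eqref{eq:V_k comparable conv} in Lemma \ref{lem:V_k compa conv}, and when $V^{(k)}=V-V_k$ one subtracts the two to obtain convergence to $0$ in $\S^n$. In all cases the $\S^n$-limit equals $(H-i)^{-1}V^\infty(H-i)^{-1}$.

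Finally I would apply Lemma \ref{lem:sequential continuity} once more, this time with Schatten exponents $(\infty,n,\infty)\to n$, to the three sequences $A_1^{(k)}$, $(H-i)^{-1}V^{(k)}(H-i)^{-1}$, $B_2^{(k)}$, yielding $\S^n$-convergence of the product to $A_1^\infty\cdot(H-i)^{-1}V^\infty(H-i)^{-1}\cdot B_2^\infty$. Recomposing by Lemma \ref{ric} in reverse identifies this limit with $(H_1-i)^{-1}V^\infty(H_2-i)^{-1}$, matching the stated factorization. The only delicate step is the so*-convergence of $(H+V_k-i)^{-1}$, handled above via the second resolvent identity and uniform boundedness; the rest is bookkeeping across the nine choices of $(H_1^{(k)},H_2^{(k)})$, all of which are uniform in the argument.
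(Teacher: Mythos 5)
Your proposal is correct and follows essentially the same route as the paper: both hinge on Lemma \ref{ric} to factor each perturbed resolvent as a bounded operator times $(H-i)^{-1}$, reducing everything to the $\S^n$-convergence of $(H-i)^{-1}V^{(k)}(H-i)^{-1}$ supplied by Lemma \ref{lem:V_k compa conv} together with uniform bounds (or so*-convergence) of the outer factors --- the paper merely presents this case by case instead of uniformly. The one loose end is that the statement asserts the $\S^n$-limits of the outer resolvents exist, whereas you only establish so*-convergence; this is repaired in one line by observing that $(H+V_k-i)^{-1}-(H+V-i)^{-1}=(H+V_k-i)^{-1}(V-V_k)(H+V-i)^{-1}$ is itself a triple product of the form you analyze, hence tends to $0$ in $\S^n$.
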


\begin{proof}
The proof is a straightforward case by case study of 27 similar cases; we prove only the case where $V^{(k)}=V_k-V$, $H_1^{(k)}=H+V_k$ and $H_2^{(k)}=H+V$ for all $k$.
Applying subsequently Lemma \ref{ric}, H\"{o}lder's inequality, and \eqref{eq:V_k comparable conv} of Lemma \ref{lem:V_k compa conv} yields
%By the second resolvent identity, $(H+V_k-i)^{-1}=(I-(H+V_k-i)^{-1}V_k)(H-i)^{-1}$ and %$(H+V-i)^{-1}=(H-i)^{-1}(I-V(H+V-i)^{-1})$. Hence, by
%\begin{align}
%&(H+V_k-iI)^{-1}(V_k-V)(H+V-iI)^{-1}\\
%&=(I-(H+V_k-iI)^{-1}V_k)\bigg[(H-iI)^{-1}(V_k-V)(H-iI)^{-1}\bigg](I-V(H+V-i)^{-1}),
%\end{align}
%and therefore
\begin{align}
\label{eq:n-norm bound resolvent identity}&\nrm{(H+V_k-i)^{-1}(V_k-V)(H+V-i)^{-1}}{n}\\
\nonumber&\leq(1+\norm{V_k})\nrm{(H-i)^{-1}(V_k-V)(H-i)^{-1}}{n}(1+\norm{V})\\
\nonumber&\leq (1+\norm{V})^2\nrm{(H-i)^{-1}(V_k-V)(H-i)^{-1}}{n}\to0.
\end{align}
Since the expression on the left-hand side of \eqref{eq:n-norm bound resolvent identity} equals $\|(H+V_k-i)^{-1}-(H+V-i)^{-1}\|_n$, we obtain that the $\S^n$-limit of $(H+V_k-i)^{-1}$ exists. Moreover, the so*-limit of $V_k-V$ exists and is 0, and the $\S^n$-limit of $(H+V-i)^{-1}$ trivially exists. The claim follows.
\end{proof}

The following existence of the spectral shift function for a finite-rank perturbation is a particular case of \cite[Theorem 1.1]{PSS13}.
\begin{prop}
\label{rpvketapk}
Let $m\in\N$ and $H$ be a self-adjoint operator in $\H$. Let $(V_k)_{k=1}^\infty$ be a sequence provided by Lemma \ref{lem:V_k compa conv}.
Then, for every $k\in\N$, there exists $\eta_{m,k}\in L^1(\R)$ such that
\begin{align}\label{Tr(R^p)=int SSF}
\Tr(R_{m,H,f}(V_k))=\int_\R f^{(m)}(x)\eta_{m,k}(x)dx
\end{align}
for all $f\in C_c^{m+1}(\R)$.
\end{prop}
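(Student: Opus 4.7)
The strategy is to reduce directly to the Schatten-class case established in \cite[Theorem 1.1]{PSS13}. The essential observation is that the approximating sequence $(V_k)_{k=1}^\infty$ provided by Lemma \ref{lem:V_k compa conv} consists of \emph{finite-rank} operators, not merely of resolvent-comparable ones. Indeed, by construction $V_k = E_{l_k} P_k V P_k$, where $E_{l_k}$ is a finite-rank spectral projection of $P_kVP_k$. Since $E_{l_k}$ has finite rank and $P_kVP_k$ is bounded, the product $V_k$ has finite rank as well. Consequently, $V_k\in\S^p$ for every $p\in[1,\infty]$; in particular $V_k\in\S^m_{\text{sa}}$.

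Once the $\S^m$-membership is in hand, the proposition follows by quoting \cite[Theorem 1.1]{PSS13}: for any self-adjoint $H$ in $\H$ and any self-adjoint $V\in\S^m$, there exists a real-valued $\eta_{m}\in L^1(\R)$ such that
$$\Tr(R_{m,H,f}(V))=\int_\R f^{(m)}(x)\,\eta_m(x)\,dx$$
holds for every $f$ in a class of test functions that includes $C_c^{m+1}(\R)$. Applying this result to the perturbation $V_k$ yields the desired function $\eta_{m,k}\in L^1(\R)$.

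It remains only to verify that the test function class appearing in \cite{PSS13} indeed contains $C_c^{m+1}(\R)$. This is immediate: if $f\in C_c^{m+1}(\R)$, then $f^{(m)}\in C_c^1(\R)$ has an integrable Fourier transform, so in the notation of the present paper $f\in\W_0^m$ (and in fact in every $\W_k^m$ by Lemma \ref{lem:tiny lem} and Example \ref{wex}), which is exactly the regularity ensuring that both sides of the trace formula make sense.

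I do not anticipate any real obstacle, since the statement is designed as a direct citation. The only step requiring attention is recognising that the particular approximations $V_k$ built in Lemma \ref{lem:V_k compa conv} are finite-rank, which is what makes the Schatten hypothesis of \cite[Theorem 1.1]{PSS13} available trivially for each fixed $k$, even though the limit operator $V$ itself is only resolvent comparable.
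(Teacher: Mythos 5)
Your proposal is correct and matches the paper exactly: the paper offers no separate argument for Proposition \ref{rpvketapk} beyond observing that it is "a particular case of [Theorem 1.1]{PSS13}," which is available precisely because the $V_k$ from Lemma \ref{lem:V_k compa conv} are finite-rank and hence lie in $\S^m$. Your additional check that $C_c^{m+1}(\R)$ sits inside the admissible test-function class of that theorem is a reasonable (and correct) piece of due diligence that the paper leaves implicit.
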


The next approximation result is a core technical component in the proof of the existence of the spectral shift function.

\begin{prop}\label{prop:Tr(R^p(V-W)) conv comp}
Let $m\in\N$, $m\ge 3$, and $H$ be a self-adjoint operator in $\H$. Let $V\in \mB(\H)_{\text{sa}}$ be such that $(H-i)^{-1}V(H-i)^{-1}\in\S^{\lceil m/2\rceil}$. Let $(V_k)_{k=1}^\infty$ be a sequence provided by Lemma \ref{lem:V_k compa conv}. Then, for every $k\in\N$, $a>0$, and $f\in C_c^{m+1}(-a,a)$, we have
$$R_{m,H,f}(V)-R_{m,H,f}(V_k)\in\S^1$$ and
$$\lim_{k\to\infty}\sup_{\substack{f\in C^{m+1}_c{(-a,a)}\\
\supnorm{f^{(m)}}\leq 1}}|\Tr(R_{ m,H,f}(V)-R_{m,H,f}(V_k))|=0.$$
\end{prop}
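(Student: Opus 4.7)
By Theorem \ref{rm}, $R_{m,H,f}(V)=T^{H,H+V,H,\ldots,H}_{f^{[m]}}(V,\ldots,V)$ and similarly for $V_k$. We decompose the difference as
\begin{align*}
R_{m,H,f}(V)-R_{m,H,f}(V_k) &= \bigl[T^{H,H+V,H,\ldots,H}_{f^{[m]}}(V,\ldots,V)-T^{H,H+V,H,\ldots,H}_{f^{[m]}}(V_k,\ldots,V_k)\bigr]\\
&\quad+\bigl[T^{H,H+V,H,\ldots,H}_{f^{[m]}}(V_k,\ldots,V_k)-T^{H,H+V_k,H,\ldots,H}_{f^{[m]}}(V_k,\ldots,V_k)\bigr].
\end{align*}
The first bracket telescopes by multilinearity of the MOI in its argument slots into a sum of $m$ MOIs of order $m$, each carrying exactly one argument equal to $V-V_k$. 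By Theorem \ref{perturbation}, the second bracket equals the single MOI $T^{H,H+V,H+V_k,H,\ldots,H}_{f^{[m+1]}}(V_k,V-V_k,V_k,\ldots,V_k)$ of order $m+1$ with $V-V_k$ in the second argument. Hence the difference is a finite sum of MOIs, each carrying exactly one factor $V-V_k$.

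Next, apply Corollary \ref{Vbarin MOI} to each MOI in this sum, producing further sums of terms of the form appearing in \eqref{T_f^[2n-1]} or \eqref{T_f^[2n]}, where odd-indexed factors $\overline V_j$ are resolvent-sandwiched and even-indexed ones are bare. By the resolvent comparable assumption and Lemma \ref{lem:resolvent comparable}, each sandwiched $\overline V_j$ lies in $\S^{\lceil m/2\rceil}$; combined with Theorem \ref{Salphamember}, Lemma \ref{lem:sequential continuity}, and H\"older's inequality, this ensures $\S^1$-membership of every such term. By cyclicity of trace, rewrite each term in the form $\Tr(U_0\,T^{\ldots}_{g^{[m']}}(U_1,\ldots,U_{m'}))$ with $g=fu^{p+1}$, $m'\leq m+1$, and the $U_j$'s suitable products of $\overline V$'s. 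A case analysis shows that $U_0$ is either $I$ (with $H_0=H_{m'}$ holding in the MOI operator tuple) or contains a sandwiched factor placing it in $\S^{\lceil m/2\rceil}$, so the hypothesis of Theorem \ref{thm:Extra expansion step} on $U_0$ is satisfied. Moreover, the set $J$ of indices $j\in\{1,\ldots,m'\}$ with $U_j$ noncompact automatically obeys the distance-$2$ spacing: two successive MOI arguments $\overline V_{i_l,i_{l+1}}$ and $\overline V_{i_{l+1},i_{l+2}}$ cannot both be purely bare, as the pivot $i_{l+1}$ would have to be simultaneously odd and even.

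Apply Theorem \ref{thm:Extra expansion step} to each term, yielding
\begin{align*}
\Tr(\text{term})=\int f^{(m)}(x)u(x)^{m+2}\,d\mu(x)
\end{align*}
with $\|\mu\|$ bounded by the quantity $p^J_\alpha$ from \eqref{pdef}. Exactly one of the $U_j$ (or $U_0$) carries the single $V-V_k$ occurrence; in $p^J_\alpha$ this contributes either the factor $\|(H_{j-1}-i)^{-1}U_j(H_j-i)^{-1}\|_{\lceil m/2\rceil}^{1-r}$ when $j\in J$ (the resolvents of neighboring operators sandwich the bare $V-V_k$) or $\|U_j\|_{\lceil m/2\rceil}^{1-r}$ when $j\notin J$ (the factor $U_j$ already contains an explicit resolvent sandwich around $V-V_k$). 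By Lemma \ref{lem:V_k extra property} this factor tends to $0$ as $k\to\infty$, while the remaining factors are uniformly controlled via $\|V\|$ and $\|(H-i)^{-1}V(H-i)^{-1}\|_{\lceil m/2\rceil}$ using Lemma \ref{lem:V_k compa conv}. Since $f\in C^{m+1}_c(-a,a)$, we have $|u(x)|^{m+2}\leq(1+a^2)^{(m+2)/2}$ on the support of $f^{(m)}$, so $|\Tr(\text{term})|\leq c_{m,a}\|f^{(m)}\|_\infty\cdot(\text{small factor})$. Summing over the finitely many terms and taking the supremum over $\|f^{(m)}\|_\infty\leq1$ proves the uniform convergence to zero, and the $\S^1$-membership of $R_{m,H,f}(V)-R_{m,H,f}(V_k)$ follows from the same expansion. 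The main technical obstacle is the combinatorial bookkeeping that tracks the single $V-V_k$ through the iterated expansions and verifies case-by-case that the bound from Theorem \ref{thm:Extra expansion step} always isolates a resolvent-sandwiched $V-V_k$ factor that vanishes.
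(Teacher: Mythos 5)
Your overall strategy (decompose the difference into finitely many MOIs each carrying one factor $V-V_k$, expand via Corollary \ref{Vbarin MOI}, verify the hypotheses of Theorem \ref{thm:Extra expansion step}, and track the vanishing factor through $p^J_\alpha$ using Lemmas \ref{lem:V_k compa conv} and \ref{lem:V_k extra property}) is the right one and matches the paper's. However, your initial decomposition has a genuine flaw. Your second bracket, $T^{H,H+V,H,\ldots,H}_{f^{[m]}}(V_k,\ldots,V_k)-T^{H,H+V_k,H,\ldots,H}_{f^{[m]}}(V_k,\ldots,V_k)$, is converted by Theorem \ref{perturbation} into an MOI of order $m+1$ with symbol $f^{[m+1]}$. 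This causes two problems. First, for $f\in C_c^{m+1}(-a,a)$ one is not guaranteed $\widehat{f^{(m+1)}}\in L^1$ (only $C_c^{m+2}\subset\W^{m+1}_k$ is asserted in Example \ref{wex}), so Theorem \ref{perturbation} at order $m+1$ and the subsequent application of Corollary \ref{Vbarin MOI} at order $m+1$ are not justified on the stated function class. Second, and more fundamentally, even if the expansion were valid, the top term ($p=m+1$) of the order-$(m+1)$ expansion is estimated by Theorem \ref{thm:Extra expansion step} via $\supnorm{(fu^{p+1})^{(m+1)}u^{m+3}}$, i.e.\ by $c_{m,a}\supnorm{f^{(m+1)}}$. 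This quantity is \emph{not} uniformly bounded on $\{f\in C_c^{m+1}(-a,a):\supnorm{f^{(m)}}\leq1\}$ (consider $f_\delta(x)=\delta^m\phi(x/\delta)$ with $\phi\in C_c^{m+1}$, for which $\supnorm{f_\delta^{(m)}}$ is constant while $\supnorm{f_\delta^{(m+1)}}\to\infty$), so the required uniform convergence of the supremum cannot be concluded.

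The paper avoids this by ordering the operations differently: it first uses the Taylor recursion to write $R_{m,H,f}(V_k)=T^{H,H+V_k,H,\ldots,H}_{f^{[m-1]}}(V_k,\ldots,V_k)-T^{H,\ldots,H}_{f^{[m-1]}}(V_k,\ldots,V_k)$, then applies Theorem \ref{perturbation} to the \emph{order-$(m-1)$} MOI to replace $H+V_k$ by $H+V$ in the operator tuple. The correction term this produces, $T^{H,H+V,H+V_k,H,\ldots,H}_{f^{[m]}}(V_k,V-V_k,V_k,\ldots,V_k)$, is then of order $m$, and recombining the remaining pieces gives $T^{H,H+V,H,\ldots,H}_{f^{[m]}}(V_k,V,V_k,\ldots,V_k)$, also of order $m$. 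All terms in the resulting decomposition carry the symbol $f^{[m]}$, so every estimate produced by Theorem \ref{thm:Extra expansion step} together with Lemma \ref{lem:tiny lem} is controlled by $\supnorm{f^{(m)}}$ alone, and only derivatives up to order $m$ of $f$ (and of $fu^l$) are ever needed. You should replace your decomposition by this one; the remainder of your argument then goes through as you describe.
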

\begin{proof}
Let $f\in C_c^{m+1}(-a,a)$.
By subsequent application of Theorems \ref{rm}, \ref{dm}, and \ref{perturbation} we obtain
\begin{align*}
&R_{m,H,f}(V_k)\\
&=T^{H,H+V_k,H,\ldots,H}_{f^{[m-1]}}(V_k,\ldots,V_k)-T^{H,\ldots,H}_{f^{[m-1]}}(V_k,\ldots,V_k)\\
&=T^{H,H+V,H,\ldots,H}_{f^{[m-1]}}(V_k,\ldots,V_k)
-T^{H,H+V,H+V_k,H,\ldots,H}_{f^{[m]}}(V_k, V-V_k,V_k,\ldots,V_k)\\
&\quad-T^{H,\ldots,H}_{f^{[m-1]}}(V_k,\ldots,V_k)\\
&=T^{H,H+V,H,\ldots,H}_{f^{[m]}}(V_k, V,V_k,\ldots,V_k)
-T^{H,H+V,H+V_k,H,\ldots,H}_{f^{[m]}}(V_k,V-V_k,V_k,\ldots,V_k).
\end{align*}
By Theorem \ref{rm} and the computation above,
\begin{align*}
&R_{m,H,f}(V)-R_{m,H,f}(V_k)\\
&=\Big(T^{H,H+V,H,\ldots,H}_{f^{[m]}}(V,\ldots,V)-T^{H,H+V,H,\ldots,H}_{f^{[m]}}(V_k, V,V_k,\ldots,V_k)\Big)\\
&\quad+T^{H,H+V,H+V_k,H,\ldots,H}_{f^{[m]}}(V_k,V-V_k,V_k,\ldots,V_k).
\end{align*}
By the telescoping technique applied to the first two terms above,
\begin{align*}
R_{m,H,f}(V)-R_{m,H,f}(V_k)
&=T^{H,H+V,H,\ldots,H}_{f^{[m]}}(V-V_k,V,\ldots,V)\\
&\quad+\sum_{j=3}^{m} T_{f^{[ m]}}^{H,H+V,H,\ldots,H}(V_k,V,\overbrace{V_k,\ldots,V_k}^{j-3\textnormal{ times}},V-V_k,V,\ldots,V)\\
&\quad+T^{H,H+V,H+V_k,H,\ldots,H}_{f^{[m]}}(V_k,V-V_k,V_k,\ldots,V_k).
\end{align*}
Hence, $R_{m,H,f}(V)-R_{m,H,f}(V_k)$ is a sum of terms of the form
\begin{align}\label{eq:terms of the form}
T^{H^{(k)}_0,\ldots,H^{(k)}_m}_{f^{[m]}}(V^{(k)}_1,\ldots,V^{(k)}_m)
\end{align}
where $H^{(k)}_i\in\{H,H+V,H+V_k\}$ for $i=0,\ldots,m$ are such that $H^{(k)}_0=H^{(k)}_m$ and $V^{(k)}_i\in \{V,V_k,V-V_k\}$ for $i=1,\ldots,m$ are such that there exists  $\hat{i}\in\{1,\ldots,m\}$ such that $$V^{(k)}_{\hat{i}}=V-V_k.$$
For the remainder of the proof we assume that $m=2n-1$ for some $n\in\N$, $n\ge 2$.

Applying \eqref{T_f^[2n-1]} of Corollary \ref{Vbarin MOI} to \eqref{eq:terms of the form} gives
\begin{align*}
&T^{H^{(k)}_0,\ldots,H^{(k)}_m}_{f^{[m]}}(V^{(k)}_1,\ldots,V^{(k)}_m)\nonumber\\
&=\sum_{p=0}^{2n-1}\sum_{0\leq i_0<\cdots<i_p\leq 2n-1}(-1)^{p+1}\,\overline{V}^{(k)}_{0, i_0}T^{H^{(k)}_{i_0},\ldots,H^{(k)}_{i_p}}_{(fu^{p+1})^{[p]}}
(\overline{V}^{(k)}_{i_0,i_1},\ldots,\overline{V}^{(k)}_{i_{p-1},i_p})\overline{V}^{(k)}_{i_p,2n-1}.
\end{align*}
Taking the trace, it follows that $\Tr(R_{2n-1,H,f}(V)-R_{2n-1,H,f}(V_k))$ is a sum of terms of the form
\begin{align}\label{term with U's}
\Tr(U_0^{(k)}T^{H_{i_0}^{(k)},\ldots,H_{i_p}^{(k)}}_{(fu^{p+1})^{[p]}}
(U_1^{(k)},\ldots,U_p^{(k)}))
\end{align}
for operators $U_0^{(k)}:=\oV^{(k)}_{i_p,2n-1}\oV^{(k)}_{0,i_0}$ and
$U^{(k)}_{j}:=\oV^{(k)}_{i_{j-1},i_j}$ for $j=1,\ldots,p$.
We have $U_0^{(k)}\in\S^{\alpha_0},\ldots,U^{(k)}_p\in\S^{\alpha_p}$ for some $\alpha_0,\ldots,\alpha_p\in [1,\infty]$ with
$\tfrac{1}{\alpha_0}+\ldots+\tfrac{1}{\alpha_p}=1$.
For the remainder of the proof, we fix a term of $\Tr(R_{2n-1,H,f}(V)-R_{2n-1,H,f}(V_k))$ written in the form \eqref{term with U's}, with the intention of showing that such a generic term converges to 0 as $k\to\infty$. This includes fixing $p$ and fixing $i_0,\ldots,i_p$.

If $p=0$, then $V^{(k)}_{\hat{i}}$ occurs as a factor of $U^{(k)}_0$, and hence \eqref{term with U's} converges to 0 as $k\to\infty$ by Lemmas \ref{lem:sequential continuity}, \ref{lem:V_k compa conv}, and \ref{lem:V_k extra property}. We may therefore in the following assume that $p\geq1$. Moreover, without loss of generality, we may assume that either $U_0^{(k)}=I$ and $H_{i_0}^{(k)}=H_{i_p}^{(k)}$, or $\alpha_0<\infty$. Lastly, we may assume that
\begin{align*}
\{j\in\{1,\ldots,p\}:~\alpha_j=\infty\}&=\{j\in\{1,\ldots,p\}:~i_j\text{ is even and }i_{j-1}+1=i_{j}\}\\
&=:J.
\end{align*}
For all $j\in J$ we have $U_j^{(k)}\in\{V,V_k,V-V_k\}$, and hence $(H_{i_{j-1}}^{(k)}-i)^{-1}U^{(k)}_j(H_{i_j}^{(k)}-i)^{-1}\in\S^n$. Since $j\in J$ implies that $i_{j-1}$ is odd and $i_j$ is even, the minimal distance between elements of $J$ is $\geq2$.
By Theorem \ref{thm:Extra expansion step} and Lemma \ref{lem:tiny lem},
\begin{align}\label{eq:bound p is sufficient}
&\Big|\Tr(U_0^{(k)}T^{H_{i_0}^{(k)},\ldots,H_{i_p}^{(k)}}_{(fu^{p+1})^{[ p]}}(U_1^{(k)},\ldots,U^{(k)}_p))\Big|
\nonumber\\
\nonumber
&\leq \tilde{c}_{\alpha}\supnorm{(fu^{p+1})^{(p)}u^{p+2}} p^{J}_{\alpha}(U_0^{(k)},\ldots,U^{(k)}_p;H_{i_0}^{(k)},\ldots,H_{i_p}^{(k)})\\
&\leq \tilde{c}_{\alpha,a}\supnorm{f^{(2n-1)}} p^{J}_{\alpha}(U_0^{(k)},\ldots,U^{(k)}_p;H_{i_0}^{(k)},\ldots,H_{i_p}^{(k)}).
\end{align}
We are therefore left to show that
$p^{J}_{\alpha}(U_0^{(k)},\ldots,U^{(k)}_p;H_{i_0}^{(k)},\ldots,H_{i_p}^{(k)})$ converges to 0 as $k\to\infty$.

By definition of $\hat{i}$ and $V^{(k)}_{\hat{i}}=V-V_k$, one of the following properties holds:
\begin{enumerate}[(i)]
\item $V^{(k)}_{\hat{i}}=\oV^{(k)}_{i_{j-1},i_j}=U^{(k)}_{j}$ for some $j\in J$;
\item $V^{(k)}_{\hat{i}}$ occurs as a factor in the product $U_0^{(k)}=\oV^{(k)}_{i_p,2n-1}\oV^{(k)}_{0,i_0}$;
\item $V^{(k)}_{\hat{i}}$ occurs as a factor in the product
$U^{(k)}_{j}=\oV^{(k)}_{i_{j-1},i_j}$ for some $j\in\{1,\ldots,p\}\setminus J$.
\end{enumerate}

In the first case we use Lemma \ref{lem:V_k extra property}, and obtain $\|(H^{(k)}_{i_{j-1}}-i)^{-1}U_j^{(k)}(H^{(k)}_{i_j}-i)^{-1}\|_n^{1-r}\to 0$ as $k\to\infty$. Therefore, by \eqref{pdef}, $p^{J}_{\alpha}(U_0^{(k)},\ldots,U^{(k)}_p;H_{i_0}^{(k)},\ldots,H_{i_p}^{(k)})\to0$ as $k\to\infty$.

In the second case, by Lemmas \ref{lem:sequential continuity}, \ref{lem:V_k compa conv}, and \ref{lem:V_k extra property} we obtain
%have $U_0\neq I$ and therefore $\alpha_0\neq\infty$, and we
$\|U_0^{(k)}\|_{\alpha_0}^{1-r}\to0$ as $k\to\infty$. Hence, by \eqref{pdef}, $p^{J}_{\alpha}(U_0^{(k)},\ldots,U^{(k)}_p;H_{i_0}^{(k)},\ldots,H_{i_p}^{(k)})\to0$ as $k\to\infty$.

In the third case we similarly find that $\|U_j^{(k)}\|_{\alpha_j}^{1-r}\to0$ as $k\to\infty$. Therefore $p^{J}_{\alpha}(U_0^{(k)},\ldots,U^{(k)}_p;H_{i_0}^{(k)},\ldots,H_{i_p}^{(k)})\to0$ as $k\to\infty$.

In all three of the above cases we find
$p^{J}_{\alpha}(U_0^{(k)},\ldots,U^{(k)}_p;H_{i_0}^{(k)},\ldots,H_{i_p}^{(k)})\to0$ as $k\to\infty$, and, therefore, the result follows from \eqref{eq:bound p is sufficient}.

The case $m=2n$ is proved by the same method, with \eqref{T_f^[2n]} applied instead of \eqref{T_f^[2n-1]}.
\end{proof}

The following result is an immediate consequence of Proposition \ref{prop:Tr(R^p(V-W)) conv comp} and the triangle inequality.
\begin{cor}
\label{corRpVkVm}
Let $m\in\N$, $m\ge 3$, and $H$ be a self-adjoint operator in $\H$. Let $V\in \mB(\H)_{\text{sa}}$ be such that $(H-i)^{-1}V(H-i)^{-1}\in\S^{\lceil m/2\rceil}$, and let $(V_k)_{k=1}^\infty$ be a sequence provided by Lemma \ref{lem:V_k compa conv}. Then, for every $k,l\in\N$, $a>0$, and $f\in C_c^{m+1}(-a,a)$, we have
$$R_{m,H,f}(V_k)-R_{m,H,f}(V_l)\in\S^1$$ and
$$\lim_{k,l\rightarrow\infty}\sup_{\substack{f\in C^{m+1}_c{(-a,a)}\\
\supnorm{f^{(m)}}\leq 1}}|\Tr(R_{m,H,f}(V_k)-R_{m,H,f}(V_l))|=0.$$
\end{cor}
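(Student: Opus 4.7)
The plan is to reduce the corollary to Proposition \ref{prop:Tr(R^p(V-W)) conv comp} by inserting $R_{m,H,f}(V)$ as an intermediate term. Since the hypotheses of the proposition are exactly those of the corollary, and since the conclusion of the proposition gives both the $\S^1$-membership and the uniform trace convergence, the argument reduces to bookkeeping with the triangle inequality.

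First, I would write the telescoping identity
\begin{align*}
R_{m,H,f}(V_k)-R_{m,H,f}(V_l)
&=\bigl(R_{m,H,f}(V)-R_{m,H,f}(V_l)\bigr)-\bigl(R_{m,H,f}(V)-R_{m,H,f}(V_k)\bigr).
\end{align*}
Proposition \ref{prop:Tr(R^p(V-W)) conv comp} asserts that for each $f\in C_c^{m+1}(-a,a)$ both differences on the right belong to $\S^1$, so $R_{m,H,f}(V_k)-R_{m,H,f}(V_l)\in\S^1$, justifying the first claim of the corollary.

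Second, I would take the trace of the telescoping identity, apply the triangle inequality, and then take the supremum over $f\in C_c^{m+1}(-a,a)$ with $\supnorm{f^{(m)}}\leq 1$, yielding
\begin{align*}
&\sup_{\substack{f\in C^{m+1}_c(-a,a)\\ \supnorm{f^{(m)}}\leq 1}}\bigl|\Tr\bigl(R_{m,H,f}(V_k)-R_{m,H,f}(V_l)\bigr)\bigr|\\
&\quad\leq \sup_{\substack{f\in C^{m+1}_c(-a,a)\\ \supnorm{f^{(m)}}\leq 1}}\bigl|\Tr\bigl(R_{m,H,f}(V)-R_{m,H,f}(V_k)\bigr)\bigr|
+\sup_{\substack{f\in C^{m+1}_c(-a,a)\\ \supnorm{f^{(m)}}\leq 1}}\bigl|\Tr\bigl(R_{m,H,f}(V)-R_{m,H,f}(V_l)\bigr)\bigr|.
\end{align*}
By Proposition \ref{prop:Tr(R^p(V-W)) conv comp}, each supremum on the right tends to $0$ as its index ($k$ or $l$) tends to infinity, and hence the left-hand side tends to $0$ as $k,l\to\infty$.

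There is essentially no obstacle here beyond carefully matching the hypotheses: the condition $(H-i)^{-1}V(H-i)^{-1}\in\S^{\lceil m/2\rceil}$ and the choice of $(V_k)$ via Lemma \ref{lem:V_k compa conv} are identical in the proposition and the corollary, so the proposition applies directly without any additional estimates.
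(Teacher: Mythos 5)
Your proposal is correct and follows exactly the route the paper intends: the corollary is stated there as an immediate consequence of Proposition \ref{prop:Tr(R^p(V-W)) conv comp} and the triangle inequality, which is precisely your insertion of $R_{m,H,f}(V)$ as an intermediate term. Nothing further is needed.
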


We conclude this section by proving the existence of a locally integrable spectral shift function for a general resolvent comparable perturbation. Nice properties of this function  are derived in the next section.

\begin{proof}[Proof of Theorem \ref{thm:eta_n}]
Let $(V_k)_{k=1}^\infty$ and $(\eta_{m,k})_{k=1}^\infty$ be as in Proposition \ref{rpvketapk}. In particular, Proposition \ref{rpvketapk} implies that
\begin{align}\label{eq:Tr(R) and eta_{k,2a}}
\Tr(R_{m,H,f}(V_k))=\int_{-a}^a f^{(m)}(t)\eta_{m,k}(t)dt
\end{align}
for all $a>0$ and all $f\in C^{m+1}_c(-a,a)$.
By \eqref{Tr(R^p)=int SSF} and Corollary \ref{corRpVkVm}, we obtain
\begin{align}\label{eq:Cauchy SSFs}
	\nrm{\eta_{m,k}-\eta_{m,l}}{L^1[-a/2,a/2]}\le&\sup_{\substack{f\in C^{ m+1}_c{(-a,a)}\\\supnorm{f^{(m)}}\leq1}}\left|\int_{-a}^a f^{(m)}(t)(\eta_{m,k}(t)-\eta_{ m,l}(t))\,dt\right|\nonumber\\
	=&\sup_{\substack{f\in C^{m+1}_c{(-a,a)}\\
\supnorm{{f^{(m)}}}\leq1}}\left|\Tr( R_{m,H,f}(V_k)-R_{m,H,f}(V_l))\right|\to 0
\end{align}
as $k,l\rightarrow\infty$.
Hence $(\eta_{m,k})_{k=1}^\infty$ is Cauchy in $L^1[-a/2,a/2]$ for every $a>0$, and is therefore Cauchy in $L^1_\loc$. By completeness of $L^1_\loc$,
there exists $\eta_m\in L^1_\loc$ such that $$\lim_{k\rightarrow\infty}
\nrm{\eta_{m,k}-\eta_{m}}{L^1[-a,a]}=0$$
for every $a>0$.
The result follows upon combining the latter statement with \eqref{eq:Tr(R) and eta_{k,2a}} and Proposition~\ref{prop:Tr(R^p(V-W)) conv comp}.
%The proof is analogous to \cite[\textsection 4.2 and \textsection 4.3]{vNS21}, now using Proposition \ref{prop:Tr(R^p(V-W)) conv comp}, noting that $p+1\leq 2n-1$. \Tco{Need to write it out.}
\end{proof}

\section{Proof of the main theorem in the odd case}

In this section we derive regularity and uniqueness properties of the spectral shift function whose existence was established in Theorem \ref{thm:eta_n} and extend the class of functions $f$ for which the respective trace formula holds. We attain our goals by establishing the existence of a certain well-behaved measure and showing that its density with respect to the Lebesgue measure coincides with the spectral shift function up to a low-degree polynomial.

\begin{thm}
\label{main}
Let $n\in\N$, $n\ge 2$,  $H$ be self-adjoint in $\H$, and let $V\in\mB(\H)_{\text{sa}}$ be such that $(H+V-i)^{-1}-(H-i)^{-1}\in\S^n$.
Then, there exists a real-valued function $\eta_{2n-1}\in L^1(\R,u^{-4n-2}(x)dx)$ such that
\begin{align}
\label{tf2n-1}
\Tr(R_{2n-1,H,f}(V))=\int f^{(2n-1)}(x)\eta_{2n-1}(x)dx
\end{align}
for every $f\in\W^{2n-1}_{4n+2}$ and such that
\begin{align}\label{eq:main bound SSF}
\int\frac{|\eta_{2n-1}(x)|}{(1+|x|)^{4n+2}}\,dx\leq c_{n}(1+\norm{V}^{2})\norm{V}^{n-1}\nrm{(H-i)^{-1}V(H-i)^{-1}}{n}^{n}
\end{align}
for a constant $c_{n}>0$.
The locally integrable function $\eta_{2n-1}$ is determined by \eqref{tf2n-1} uniquely up to a polynomial summand of degree at most $2n-2$.
\end{thm}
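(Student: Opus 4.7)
The plan is to establish a finite Radon measure representation $\Tr(R_{2n-1,H,f}(V))=\int f^{(2n-1)}(x)\,u^{4n+2}(x)\,d\mu(x)$ for all $f\in\W^{2n-1}_{4n+2}$, then compare with Theorem \ref{thm:eta_n} to pass from the measure to an absolutely continuous density that agrees with $\eta_{2n-1}$ modulo a polynomial of degree at most $2n-2$, and finally take real parts.

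First I would apply Theorem \ref{rm} followed by Corollary \ref{Vbarin MOI}, formula \eqref{T_f^[2n-1]} (with $(H_0,H_1,H_2,\ldots,H_{2n-1})=(H,H+V,H,\ldots,H)$), to expand the Taylor remainder as a finite sum over $p\in\{0,\ldots,2n-1\}$ and $0\le i_0<\cdots<i_p\le 2n-1$ of terms $\oV_{0,i_0}\,T^{H_{i_0},\ldots,H_{i_p}}_{(fu^{p+1})^{[p]}}(\oV_{i_0,i_1},\ldots,\oV_{i_{p-1},i_p})\,\oV_{i_p,2n-1}$. Cycling the trace converts each summand to $\Tr(U_0\,T^{\cdots}_{(fu^{p+1})^{[p]}}(U_1,\ldots,U_p))$ with $U_0=\oV_{i_p,2n-1}\oV_{0,i_0}$ and $U_j=\oV_{i_{j-1},i_j}$, and I would then apply Theorem \ref{thm:Extra expansion step} with
$J=\{j\in\{1,\ldots,p\}:\;i_{j-1}+1=i_j\text{ and }i_j\text{ is even}\}$.
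The separation hypothesis $|j_1-j_2|\ge 2$ for distinct $j_1,j_2\in J$ is automatic, since $i_{j+1}=i_j+1$ forces opposite parities of $i_j$ and $i_{j+1}$; the borderline case $\alpha_0=\infty$ forces $i_0=0$ and $i_p=2n-1$, so that $U_0=I$ and $H_{i_0}=H_{i_p}=H$, as required. The theorem then yields a finite Radon measure $\mu_{p,\mathbf{i}}$ with $\Tr(\cdot)=\int(fu^{p+1})^{(p)}u^{p+2}\,d\mu_{p,\mathbf{i}}$ and $\|\mu_{p,\mathbf{i}}\|\le c_{n,p}\,p^J_\alpha$. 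A careful bookkeeping of exponents, using $r=|J|/(n+|J|)$ together with the cancellation $|J|(1-r)=nr$, reduces $p^J_\alpha$ to $c_n\,\|V\|^{n-1}\|(H-i)^{-1}V(H-i)^{-1}\|_n^n$; the extra $(1+\|V\|^2)$ in \eqref{eq:main bound SSF} arises from applying Lemma \ref{ric} to rewrite the resolvents of $H+V$ (which appear because $H_1=H+V$) in terms of resolvents of $H$.

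Next I would expand $(fu^{p+1})^{(p)}=\sum_{l=0}^p c_{p,l}\,f^{(l)}u^{l+1}$ by the Leibniz rule and apply Lemma \ref{lem:partial integration} with $k=2n-1-l$, $\epsilon=1$ to each $\int f^{(l)}u^{l+p+3}\,d\mu_{p,\mathbf{i}}$ (the regularity hypotheses coming from Proposition \ref{prop:Wsn}), obtaining $\int f^{(2n-1)}u^{p+2n+3}\,d\tilde\mu_{p,l,\mathbf{i}}$. Since $p+2n+3\le 4n+2$, factoring $u^{p+2n+3}=u^{4n+2}\cdot u^{p-2n+1}$ with $|u^{p-2n+1}|\le 1$ allows absorption of the extra bounded factor into the measure, and summing over $p$, $l$, and $\mathbf{i}$ produces a single finite complex Radon measure $\mu$ with $\|\mu\|$ bounded by the right-hand side of \eqref{eq:main bound SSF} and $\Tr(R_{2n-1,H,f}(V))=\int f^{(2n-1)}(x)\,u^{4n+2}(x)\,d\mu(x)$ for all $f\in\W^{2n-1}_{4n+2}$.

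For $f\in C_c^{2n}(\R)\subset\W^{2n-1}_{4n+2}$, comparison with Theorem \ref{thm:eta_n} yields $\int f^{(2n-1)}(u^{4n+2}\,d\mu-\eta_{2n-1}\,dx)=0$; specializing to $f\in C_c^\infty$ forces the distribution $u^{4n+2}\,d\mu-\eta_{2n-1}\,dx$ to have vanishing $(2n-1)$-th distributional derivative, hence to be represented by a polynomial of degree at most $2n-2$. In particular, $u^{4n+2}\,d\mu$ is absolutely continuous, with density differing from $\eta_{2n-1}$ by such a polynomial. Redefining $\eta_{2n-1}$ to be this density and then replacing it by its real part (legitimate since $R_{2n-1,H,f}(V)$ is self-adjoint for real $f$, so the trace is real) produces a real-valued $\eta_{2n-1}\in L^1(\R,u^{-4n-2}(x)\,dx)$ satisfying \eqref{eq:main bound SSF}. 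The trace formula \eqref{tf2n-1} for arbitrary $f\in\W^{2n-1}_{4n+2}$ then follows from the measure representation together with the vanishing of $\int f^{(2n-1)}q\,dx$ for polynomials $q$ of degree $\le 2n-2$, verified by $2n-1$ iterated integrations by parts whose boundary terms vanish by the decay in Proposition \ref{prop:Wsn}. Uniqueness up to a polynomial of degree $\le 2n-2$ follows from the same distributional argument applied to the difference of two candidate SSFs. The principal technical obstacle will be the combinatorial bookkeeping of Step 1: uniformly verifying the parity hypothesis on $J$ across all index tuples and extracting the clean bound on $p^J_\alpha$ via the identity $|J|(1-r)=nr$, together with the uniform conversion of all $H+V$-resolvents to $H$-resolvents via Lemma \ref{ric}, which is what produces the $(1+\|V\|^2)$ factor.
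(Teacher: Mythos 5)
Your proposal is correct and follows essentially the same route as the paper: expansion via Theorem \ref{rm} and Corollary \ref{Vbarin MOI}, cyclicity of the trace plus Theorem \ref{thm:Extra expansion step} with the same choice of $J$ and the same exponent bookkeeping via $r=|J|/(n+|J|)$, then Leibniz plus Lemma \ref{lem:partial integration} to reach $\int f^{(2n-1)}u^{4n+2}\,d\mu$, and finally the distributional-derivative comparison with Theorem \ref{thm:eta_n} and taking real parts. The only cosmetic difference is that you absorb the bounded factor $u^{p-2n+1}$ into the measure rather than normalizing each term to the exact weight $u^{4n+2}$, which is equivalent.
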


\label{sct:pf main thm}
We denote $H_0=H$, $H_1=H+V$, $H_2=\ldots=H_{2n-1}=H$ and $V_1=\cdots=V_{2n-1}=V$, which along with the notation \eqref{bardef} implies
\begin{align*}
&\oV_1=(H-i)^{-1}V(H+V-i)^{-1},\quad\oV_{2i}=V,\quad \oV_{2i+1}=(H-i)^{-1}V(H-i)^{-1}\quad(i\in\N)
\end{align*}
and $\oV_{i,j}=\oV_{i+1}\cdots\oV_j,\quad \oV_{0,0}=I.$

By Theorem \ref{rm} and the decomposition \eqref{T_f^[2n-1]}, we get
\begin{align}
\label{rpdef}
\nonumber
R_{2n-1,H,f}(V)&=T^{H,H+V,H,\ldots,H}_{f^{[2n-1]}}(V,\ldots,V)\\
\nonumber
&=\sum_{p=0}^{2n-1}(-1)^{p+1}\sum_{0\leq i_0<\cdots<i_p\leq 2n-1}
\oV_{0,i_0}T^{H_{i_0},\ldots,H_{i_p}}_{(fu^{p+1})^{[p]}}(\oV_{i_0,i_1},
\ldots,\oV_{i_{p-1},i_p})\oV_{i_p,2n-1}\\
&=:\sum_{p=0}^{2n-1}(-1)^{p+1}R^p_{2n-1,H,f}(V).
\end{align}

\begin{lem}\label{lem:measure for oV's}
Let $n\in\N$, $n\geq2$. Let $H$ be self-adjoint in $\H$ and let $V\in\mB(\H)_\text{sa}$ satisfy $(H+V-i)^{-1}-(H-i)^{-1}\in\S^n$. For all $p\in\{0,\ldots,2n-1\}$, there exists a complex Radon measure $\mu_{n,p}$ such that
\begin{align}\label{eq:trace formula Rp}
\Tr(R^p_{2n-1,H,f}(V))=\int (fu^{p+1})^{(p)}u^{p+2}\,d\mu_{n,p}
\end{align}
for all $f\in\W^p_{2p+3}$, and such that
\begin{align}
\label{wmunp}
\norm{\mu_{n,p}}\leq c_{n,p}(1+\norm{V}^{2})\norm{V}^{n-1}\nrm{(H-i)^{-1}V(H-i)^{-1}}{n}^{n}.
\end{align}
\end{lem}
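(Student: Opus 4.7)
The plan is to apply Theorem \ref{thm:Extra expansion step} separately to each summand of the decomposition \eqref{rpdef} and sum the resulting measures. Fix $0\le i_0<\cdots<i_p\le 2n-1$. By cyclicity of the trace,
\begin{align*}
\Tr\!\big(\oV_{0,i_0}T^{H_{i_0},\ldots,H_{i_p}}_{(fu^{p+1})^{[p]}}(\oV_{i_0,i_1},\ldots,\oV_{i_{p-1},i_p})\oV_{i_p,2n-1}\big)=\Tr\!\big(U_0T^{H_{i_0},\ldots,H_{i_p}}_{(fu^{p+1})^{[p]}}(U_1,\ldots,U_p)\big),
\end{align*}
with $U_0:=\oV_{i_p,2n-1}\oV_{0,i_0}$ and $U_j:=\oV_{i_{j-1},i_j}$ for $1\le j\le p$. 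Writing $a_j$ for the number of odd indices in the range defining $U_j$ (with the natural wrap-around for $j=0$) and setting $\alpha_j:=n/a_j$ when $a_j\ge 1$ and $\alpha_j:=\infty$ otherwise, one has $U_j\in\S^{\alpha_j}$, $\sum_j 1/\alpha_j=1$, and the exceptional set is $J=\{j:\alpha_j=\infty\}=\{j\in\{1,\ldots,p\}:i_j-i_{j-1}=1\text{ and }i_j\text{ is even}\}$.

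Next I would verify the hypotheses of Theorem \ref{thm:Extra expansion step} with $m=p$ and $g=fu^{p+1}$. The minimum-distance property $|j-j'|\ge 2$ on $J$ follows by parity: two consecutive elements $j,j+1\in J$ would force both $i_j$ and $i_{j+1}=i_j+1$ to be even. For $j\in J$, $U_j=V$, and Lemma \ref{ric} combined with the hypothesis yields $(H_{i_{j-1}}-i)^{-1}V(H_{i_j}-i)^{-1}\in\S^n$ (the factor $(H+V-i)^{-1}$ may appear in place of $(H-i)^{-1}$ only when $i_{j-1}=1$). Either $U_0=I$ with $H_{i_0}=H=H_{i_p}$ (when $i_0=0$ and $i_p=2n-1$), or $U_0$ contains an odd-indexed $\oV_k$ and hence lies in a finite Schatten class. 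Since $f\in\W^p_{2p+3}$ implies $fu^{p+1}\in\W^p_{p+2}$, Theorem \ref{thm:Extra expansion step} produces a Radon measure $\mu^{(i)}$ satisfying the integral representation of \eqref{eq:trace formula Rp} for this single summand with $\|\mu^{(i)}\|\le c_p\,p^J_\alpha$.

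The decisive step is the bound $p^J_\alpha(U_0,\ldots,U_p;H_{i_0},\ldots,H_{i_p})\le c_n(1+\norm{V}^2)\norm{V}^{n-1}C_0^n$, where $C_0:=\nrm{(H-i)^{-1}V(H-i)^{-1}}{n}$. By Lemma \ref{ric} one has $\nrm{\oV_1}{n}\le(1+\norm{V})C_0$ and $\nrm{(H+V-i)^{-1}V(H-i)^{-1}}{n}\le(1+\norm{V})C_0$, while $\nrm{\oV_k}{n}=C_0$ for odd $k\ge 3$ and $\norm{\oV_k}\le\norm{V}$ for every $k$. Since $\oV_1$ appears in at most one $U_j$ and at most one $j\in J$ has $i_{j-1}=1$, a H\"older estimate on each $\nrm{U_j}{\alpha_j}$ and on each $\nrm{(H_{i_{j-1}}-i)^{-1}U_j(H_{i_j}-i)^{-1}}{n}$ produces a product with at most $n+|J|$ factors of $C_0$, $n-1-|J|$ factors of $\norm{V}$, and at most two factors of $(1+\norm{V})$. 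Raising to the power $1-r=n/(n+|J|)$ and multiplying by $\prod_j\norm{U_j}^r\le\norm{V}^{r(2n-1)}$, the exponents collapse to exactly $n$ for $C_0$, $n-1$ for $\norm{V}$, and $2(1-r)\le 2$ for $(1+\norm{V})$; the inequality $(1+\norm{V})^2\le 2(1+\norm{V}^2)$ completes the estimate.

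Setting $\mu_{n,p}:=\sum_{i_0<\cdots<i_p}\mu^{(i)}$, the finitely many summations absorb into $c_{n,p}$ and the lemma follows. The degenerate case $p=0$ (outside the range $m\in\N$ of Theorem \ref{thm:Extra expansion step}) is handled directly: $U_0\in\S^1$, so $\Tr(U_0(fu)(H_{i_0}))=\int fu\,d\nu_{i_0}$ for a Radon measure $\nu_{i_0}$ with $\norm{\nu_{i_0}}\le\nrm{U_0}{1}$, and the substitution $d\mu:=u^{-2}\,d\nu_{i_0}$ recovers the required form. The principal difficulty lies in the third paragraph: proving that the parity structure of the $\oV_k$ and the choice of $r$ force the $(1+\norm{V})$ exponent to remain bounded by $2$ independently of $n$, since the naive bound $(1+\norm{V})^n$ would be incompatible with the stated $(1+\norm{V}^2)$ factor on the right-hand side of \eqref{wmunp}.
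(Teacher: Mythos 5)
Your proposal is correct and follows essentially the same route as the paper's proof: cyclicity of the trace, the same choice of Schatten exponents $\alpha_j$ by counting odd-indexed factors $\oV_k$, the same identification of $J$ and verification of the hypotheses of Theorem \ref{thm:Extra expansion step}, and the same exponent bookkeeping via $r=\frac{|J|}{n+|J|}$ yielding $\norm{V}^{n-1}(1+\norm{V})^2\nrm{(H-i)^{-1}V(H-i)^{-1}}{n}^n$. Your explicit treatment of the degenerate case $p=0$ (which falls outside the stated range $m\in\N$ of Theorem \ref{thm:Extra expansion step}) is a small but welcome refinement that the paper passes over silently.
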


\begin{proof}
We fix $i=(i_0,\ldots,i_p)$ satisfying $0\leq i_0<\cdots<i_p\leq 2n-1$
and denote $U_0=\oV_{i_p,2n-1}\oV_{0,i_0}$ and $U_j=\oV_{i_{j-1},i_j}$ for all $j=1,\dots,p$.
By cyclicity of the trace,
\begin{align*}
\Tr\big(\oV_{0,i_0}T^{H_{i_0},\ldots,H_{i_p}}_{(fu^{p+1})^{[p]}}
(\oV_{i_0,i_1},\ldots,\oV_{i_{p-1},i_p})\oV_{i_p,2n-1}\big)=\Tr\big(U_0T^{H_{ i_0},\ldots,H_{i_p}}_{(fu^{p+1})^{[p]}}(U_1,\ldots,U_p)\big).
\end{align*}

We define $\alpha_0:=n(\lceil\frac{{i_0}}{2}\rceil+n-\lceil\frac{i_p}{2}\rceil)^{-1}$ and $\alpha_j:=n(\lceil\frac{i_j}{2}\rceil-\lceil\frac{i_{j-1}}{2}\rceil)^{-1}$ for $j=1,\ldots,p$. We obtain $\tfrac{1}{\alpha_0}+\ldots+\tfrac{1}{\alpha_p}=1$, $U_j\in\S^{\alpha_j}$ for $j=0,\ldots,p$, and find that the space
\begin{align*}
	J:=\{j\in\{1,\ldots,p\}:~\alpha_j=\infty\}=\{j\in\{1,\ldots,p\}~|~\text{$i_j$ is even and } i_{j}=i_{j-1}+1\}
\end{align*}
has minimal distance $\geq2$.
Furthermore, for all $j\in J$ we have $U_j=\overline V_{i_{j-1},i_j}=\overline V_{i_j}=V$ and therefore, by the second resolvent identity,
\begin{align*}
(H_{i_{j-1}}-i)^{-1}U_j(H_{i_j}-i)^{-1}=(H_{i_{j-1}}-i)^{-1}V(H_{i_j}-i)^{-1}\in\S^n.
\end{align*}
Moreover, we have $U_0=\oV_{i_p,2n-1}\oV_{0,i_0}=I$ and $H_{i_0}=H=H_{i_p}$ (since $n\geq2$) whenever $i_p=2n-1,$ $i_0=0$, and $\alpha_0=n(\lceil\frac{{i_0}}{2}\rceil+n-\lceil\frac{i_p}{2}\rceil)^{-1}<\infty$ otherwise.
Therefore all assumptions of Theorem \ref{thm:Extra expansion step} are satisfied. By applying Theorem \ref{thm:Extra expansion step} to $g=fu^{p+1}$ and $m=p$, we obtain a complex Radon measure $\mu_{n,p,i}$ such that
\begin{align}\label{eq:trace formula n,p,i}
\Tr\big(U_0T^{H_{i_0},\ldots,H_{i_p}}_{(fu^{p+1})^{[p]}}(U_1,\ldots,U_p)\big)
=\int(fu^{p+1})^{(p)}u^{p+2}\, d\mu_{n,p,i},
\end{align}
for all $f\in \W^{p}_{2p+3}$ (as this implies that $g=fu^{p+1}\in\W^p_{p+2}$) and a constant $c_{n,p,i}>0$ satisfying
\begin{align}\label{eq:bound n,p,i}
\norm{\mu_{n,p,i}}
&\leq c_{n,p,i} \,p^{J}_{\alpha}(U_0,\ldots,U_p;H_{i_0},\dots,H_{i_p}).
\end{align}
We now bound the factors of $p^{J}_{\alpha}(U_0,\ldots,U_p;H_{i_0},\dots,H_{i_p})$.
Note that
\begin{align*}
\norm{U_0}^r\cdots\norm{U_p}^r\leq \norm{V}^{r(2n-1)}
\end{align*}
and, by Lemma \ref{ric},
\begin{align*}
\prod_{j\in J}\nrm{(H_{i_{j-1}}-i)^{-1}U_j(H_{i_j}-i)^{-1}}{n}^{1-r}
&=\prod_{j\in J}\nrm{(H_{i_{j-1}}-i)^{-1}V(H_{i_j}-i)^{-1}}{n}^{1-r}\\
&\leq (1+\norm{V})\nrm{(H-i)^{-1}V(H-i)^{-1}}{n}^{(1-r)|J|}.
\end{align*}
Note also that if $j\in J$, then $(j+1)\notin J$, $i_j=i_{j-1}+1$, and $\overline{V}_{i_j}=V$ is a factor in none of $U_k$, $k\notin J$. If $j\notin J$, then $U_j=\overline{V}_{i_{j-1},i_j}$ contains exactly
$\lceil\frac{i_j}{2}\rceil-\lceil\frac{i_{j-1}}{2}\rceil$ factors of the form $(H_{k-1}-i)^{-1}V(H_k-i)^{-1}$. Therefore, the operators $\oV_{i_p,2n-1}\oV_{0,i_0}$ and $\oV_{i_{j-1},i_j}$, $j\in\{1,\ldots,p\}\setminus J$, altogether contain as factors all the operators $\oV_i$ for odd $i$ and all but $|J|$ of the operators $\oV_i=V$ for even $i$. Hence, by H\"{o}lder's inequality and definition of $\alpha_k$,
\begin{align*}
\prod_{j\in\{0,\ldots,p\}\setminus J}\nrm{U_j}{\alpha_j}^{1-r}
&=\nrm{\oV_{i_p,2n-1}\oV_{0,i_0}}{\alpha_0}^{1-r}\prod_{j\in\{1,\ldots,p\}\setminus J}\nrm{\oV_{i_{j-1},i_j}}{\alpha_j}^{1-r}\\
&\leq \norm{V}^{(1-r)(n-1-|J|)}(1+\norm{V})\nrm{(H-i)^{-1}V(H-i)^{-1}}{n}^{(1-r)n}.
\end{align*}
To the combination of the last three results we apply the fact that $r=\frac{|J|}{n+|J|}$ and, consequently, $1-r=\frac{n}{n+|J|}$. Hence, by \eqref{pdef}, we obtain
\begin{align}\label{eq:bound pJ n,p,i}
&p^{J}_{\alpha}(U_0,\ldots,U_p;H_{i_0},\dots,H_{i_p})\nonumber\\
&\leq \norm{V}^{r(2n-1)+(1-r)(n-1-|J|)}(1+\norm{V})^2\nrm{(H-i)^{-1}V(H-i)^{-1}}{n}^{(1-r)(|J|+n)}\nonumber\\
&= \norm{V}^{n-1}(1+\norm{V})^{2}\nrm{(H-i)^{-1} V(H-i)^{-1}}{n}^{n}.
\end{align}
By summing the measures $\mu_{n,p,i}$ over $i=(i_0,\dots,i_p)$ we obtain a complex Radon measure $\mu_{n,p}$ such that \eqref{eq:trace formula Rp} follows from \eqref{eq:trace formula n,p,i}. Moreover, \eqref{wmunp} follows from \eqref{eq:bound n,p,i} and \eqref{eq:bound pJ n,p,i}.
\end{proof}

\begin{proof}[Proof of Theorem \ref{main}]
By \eqref{rpdef} and Lemma \ref{lem:measure for oV's}, we obtain
\begin{align}
\label{rpvial}
\Tr(R_{2n-1,H,f}(V))&=\sum_{p=0}^{2n-1}\int (fu^{p+1})^{(p)}u^{p+2}d\mu_{n,p}
\end{align}
for some Radon measures $\mu_{n,p}$ satisfying \eqref{wmunp} and $f\in\bigcap_{p=0}^{2n-1}\W^p_{2p+3}=\W^{2n-1}_{4n+1}$. By applying the higher-order Leibniz rule to \eqref{rpvial},
%\begin{align*}
%\Tr(R_{2n-1,H,f}(V))=\sum_{p=0}^{2n-1}\sum_{l=0}^p\int f^{(2n-1)}u^{2n+p+3}\,d\tilde\mu_{n,p,l}
%\end{align*}
and applying Lemma \ref{lem:partial integration} to the result, we obtain a Radon measure $\breve\mu_n$ satisfying
\begin{align*}
\Tr(R_{2n-1,H,f}(V))=\int f^{(2n-1)}u^{4n+2}\,d\breve\mu_{n}
\end{align*}
for all $f\in\W^{2n-1}_{4n+2}$ and
\begin{align}\label{eq:almost final bound on SSM}
\norm{\breve\mu_{n}}\leq c_{n,p}(1+\norm{V}^{2})\norm{V}^{n-1}\nrm{(H-i)^{-1}V(H-i)^{-1}}{n}^{n}
\end{align}
(see \eqref{wmunp}).
By defining $d\mu_n:=u^{4n+2}d\breve\mu_n$ we obtain a measure $\mu_n$ such that
\begin{align*}
	\Tr(R_{2n-1,H,f}(V))=\int f^{(2n-1)} \, d\mu_n
\end{align*}
for all $f\in\W^{2n-1}_{4n+2}$ and
\begin{align}\label{eq:final bound on SSM}
\norm{u^{-4n-2}d\mu_{n}}\leq c_{n,p}(1+\norm{V}^{2})\norm{V}^{n-1}\nrm{(H-i)^{-1}V(H-i)^{-1}}{n}^{n}.
\end{align}

Suppose $\tilde\mu_n$ is another measure such that
\begin{align}\label{eq:another SSM}
	\Tr(R_{2n-1,H,f}(V))=\int f^{(2n-1)} \, d\tilde\mu_n
\end{align}
for all $f\in C_c^{2n}$. Then, $\int f^{(2n-1)}d(\tilde\mu_n-\mu_n)=0$ for all $f\in C_c^{2n}$,
so the $(2n-1)$st derivative of the distribution $g\mapsto\int g\,d(\tilde\mu_n-\mu_n)$ equals $0$. Since the primitive of a distribution is unique up to an additive constant,
we obtain that there exists a polynomial $p_{2n-2}$ of degree at most $2n-2$ such that
$$d\tilde\mu_n(x)=d\mu_n(x)+p_{2n-2}(x)dx.$$

The spectral shift function obtained by Theorem \ref{thm:eta_n}, which we shall denote here as $\tilde{\eta}_{2n-1}$ instead of $\eta_{2n-1}$, also induces a measure $\tilde\mu_n$ satisfying \eqref{eq:another SSM}, namely $d\tilde\mu_n(x):=\tilde\eta_{2n-1}(x)dx$. By the argument given above, we find a polynomial $p_{2n-2}$ of degree at most $2n-2$ such that
$$(\tilde\eta_{2n-1}(x)-p_{2n-2}(x))dx=d\mu_n(x).$$
By defining $\acute\eta_{2n-1}:=\tilde\eta_{2n-1}-p_{2n-2}$, we find $\acute\eta_{2n-1}\in L^1_\loc$,
\begin{align}\label{eq:SSF bound almost done}
\absnorm{u^{-4n-2}\acute\eta_{2n-1}}=\norm{u^{-4n-2}d\mu_n}\leq c_{n,p}(1+\norm{V}^{2})\norm{V}^{n-1}\nrm{(H-i)^{-1}V(H-i)^{-1}}{n}^{n},
\end{align}
and
\begin{align}\label{eq:SSF formula almost done}
\Tr(R_{2n-1,H,f}(V))=\int f^{(2n-1)}\acute\eta_{2n-1}+\int f^{(2n-1)}p_{2n-2}=\int f^{(2n-1)}\acute\eta_{2n-1}.
\end{align}
The second inequality in \eqref{eq:SSF formula almost done} follows by $(2n-1)$-times partial integration applied to $\int f^{(2n-1)}p_{2n-2}$ and Proposition \ref{prop:Wsn}. By taking the real part of \eqref{eq:SSF formula almost done}, defining
$$\eta_{2n-1}:=\text{Real}(\acute\eta_{2n-1}),$$ and noting that the left-hand side of \eqref{eq:SSF formula almost done} is real for all real-valued $f\in C^{2n}_c$, we obtain \eqref{tf2n-1}.
Since $|\eta_{2n-1}|\leq|\acute\eta_{2n-1}|$, \eqref{eq:SSF bound almost done} implies that $\eta_{2n-1}\in L^1(\R,u^{-4n-2}(x)dx)$, along with the desired bound \eqref{eq:main bound SSF}.
\end{proof}

\section{Proof of the main theorem in the even case}

In this section we prove the desired properties of the spectral shift function $\eta_{2n}$.
\begin{thm}
\label{main even}
Let $n\in\N$, $n\ge 2$,  $H$ be self-adjoint in $\H$, and let $V\in\mB(\H)_{\text{sa}}$ be such that $(H+V-i)^{-1}-(H-i)^{-1}\in\S^n$.
Then, there exists a real-valued function $\eta_{2n}\in L^1(\R,u^{-4n-3}(x)dx)$ such that
\begin{align}\label{tf2n-1 even}
\Tr(R_{2n,H,f}(V))=\int f^{(2n)}(x)\eta_{2n}(x)dx
\end{align}
for every $f\in\W^{2n}_{4n+3}$ and such that
\begin{align*}
\int\frac{|\eta_{2n}(x)|}{(1+|x|)^{4n+3}}\,dx\leq c_{n}(1+\norm{V}^{2})\norm{V}^{n}\nrm{(H-i)^{-1}V(H-i)^{-1}}{n}^{n}
\end{align*}
for a constant $c_{n}>0$.
The locally integrable function $\eta_{2n}$ is determined by \eqref{tf2n-1 even} uniquely up to a polynomial summand of degree at most $2n-1$.
\end{thm}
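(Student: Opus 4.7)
The plan is to follow the proof of Theorem \ref{main} essentially verbatim, using the even-order decomposition \eqref{T_f^[2n]} of Corollary \ref{Vbarin MOI} in place of the odd-order decomposition \eqref{T_f^[2n-1]}. Applying Theorem \ref{rm} and \eqref{T_f^[2n]}, I would write
\begin{align*}
R_{2n,H,f}(V) &= T^{H,H+V,H,\ldots,H}_{f^{[2n]}}(V,\ldots,V) \\
&= \sum_{p=0}^{2n}(-1)^p\sum_{0\leq i_0<\cdots<i_{p-1}\leq 2n-1}\overline{V}_{0,i_0}T^{H_{i_0},\ldots,H_{i_{p-1}},H_{2n}}_{(fu^p)^{[p]}}(\overline{V}_{i_0,i_1},\ldots,\overline{V}_{i_{p-1},2n}) \\
&=: \sum_{p=0}^{2n}(-1)^p R^p_{2n,H,f}(V),
\end{align*}
and then establish the even-order analogue of Lemma \ref{lem:measure for oV's}: for each $p\in\{0,\ldots,2n\}$ there exists a complex Radon measure $\mu_{n,p}$ satisfying
$$\Tr(R^p_{2n,H,f}(V)) = \int (fu^p)^{(p)}u^{p+2}\,d\mu_{n,p}$$
for all $f\in\W^p_{2p+3}$, and such that $\|\mu_{n,p}\|\le c_{n,p}(1+\|V\|^2)\|V\|^n\|(H-i)^{-1}V(H-i)^{-1}\|_n^n$.

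For $p\geq 1$, cyclicity of the trace reshapes each summand as $\Tr(U_0T^{H_{i_0},\ldots,H_{i_{p-1}},H_{2n}}_{(fu^p)^{[p]}}(U_1,\ldots,U_p))$ with $U_0:=\overline{V}_{0,i_0}$, $U_j:=\overline{V}_{i_{j-1},i_j}$ for $j<p$, and $U_p:=\overline{V}_{i_{p-1},2n}$; the only structural difference from Lemma \ref{lem:measure for oV's} is that the last factor now lives inside the MOI rather than as a right multiplier. The hypotheses of Theorem \ref{thm:Extra expansion step} are still met: either $i_0>0$ and $U_0$ lies in a finite Schatten class, or $i_0=0$ in which case $U_0=I$ and $H_{i_0}=H_0=H=H_{2n}$. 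The set $J:=\{j\in\{1,\ldots,p\}:\alpha_j=\infty\}$ has minimal distance $\geq 2$ for the same reason as in the odd case, since two consecutive elements of $J$ would force two consecutive even integers in the strictly increasing sequence $i_0,\ldots,i_{p-1}$. The concatenated chain now contains $n$ standalone copies of $V$ and $n$ paired factors controllable in the $\S^n$-norm via Lemma \ref{ric}, i.e.\ exactly one more copy of $V$ than in the odd case; invoking Theorem \ref{thm:Extra expansion step} and bounding the factors of $p^J_\alpha$ by H\"older's inequality gives the exponent calculation $r\cdot 2n+(1-r)(n-|J|)=n$ (replacing the odd-case identity that yielded $n-1$), which produces the claimed $\|V\|^n$ factor. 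The degenerate case $p=0$ contributes $\pm\Tr(\overline{V}_{0,2n}f(H))$; since $\overline{V}_{0,2n}\in\S^1$ with norm bounded by $(1+\|V\|)\|V\|^n\|(H-i)^{-1}V(H-i)^{-1}\|_n^n$, one takes $\mu_{n,0}$ to be $u^{-2}$ times the finite spectral measure $\Tr(\overline{V}_{0,2n}\,dE_H)$.

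Summing the measures $\mu_{n,p}$ across $p$, applying the higher-order Leibniz rule, and then Lemma \ref{lem:partial integration} to raise every derivative to order $2n$ produces a single Radon measure $\breve\mu_n$ with $\Tr(R_{2n,H,f}(V))=\int f^{(2n)}u^{4n+3}\,d\breve\mu_n$ for every $f\in\W^{2n}_{4n+3}$; the weight $u^{4n+3}$ (rather than $u^{4n+2}$ in the odd case) is a direct numerical consequence of the extra $\overline{V}$-factor in the chain. Defining $d\mu_n:=u^{4n+3}d\breve\mu_n$ and comparing with the locally integrable spectral shift function from Theorem \ref{thm:eta_n} via the distributional primitive argument used in the proof of Theorem \ref{main} --- vanishing of the $(2n)$-th distributional derivative of the difference forces agreement modulo a polynomial of degree at most $2n-1$ --- yields $\eta_{2n}\in L^1(\R,u^{-4n-3}(x)dx)$ satisfying both the claimed trace formula and the claimed bound; taking the real part provides real-valuedness since the trace is real for real-valued $f\in C_c^{2n+1}$. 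The main obstacle is the bookkeeping: verifying the borderline hypothesis $U_0=I$, $H_{i_0}=H_{2n}=H$ of Theorem \ref{thm:Extra expansion step} in the edge case $i_0=0$, and tracking the Schatten-class counting across the chain to ensure the correct exponents $\|V\|^n$ and $\|(H-i)^{-1}V(H-i)^{-1}\|_n^n$ emerge simultaneously from the combinatorial identity $r\cdot 2n+(1-r)(n-|J|)=(1-r)(|J|+n)=n$.
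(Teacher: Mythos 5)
Your proposal is correct and follows essentially the same route as the paper: the even-order decomposition \eqref{T_f^[2n]} with $i_p:=2n$, the even analogue of Lemma \ref{lem:measure for oV's} via Theorem \ref{thm:Extra expansion step} (including the borderline case $i_0=0$, $U_0=I$, $H_{i_0}=H_{2n}=H$), the exponent identity $2nr+(1-r)(n-|J|)=(1-r)(n+|J|)=n$, and the concluding Leibniz/partial-integration/uniqueness/real-part steps. The only cosmetic difference is that you state the intermediate measure lemma for $f\in\W^p_{2p+3}$ where the paper uses the slightly larger class $\W^p_{2p+2}$; this does not affect the final statement since the intersection over $p$ still contains $\W^{2n}_{4n+3}$.
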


We denote $H_0=H$, $H_1=H+V$, $H_2=\ldots=H_{2n}=H$ and $V_1=\cdots=V_{2n}=V$, which along with the notation \eqref{bardef} implies
\begin{align*}
&\oV_1=(H-i)^{-1}V(H+V-i)^{-1},\quad\oV_{2i}=V,\quad \oV_{2i+1}=(H-i)^{-1}V(H-i)^{-1}
\end{align*}
for all relevant $i\in\N$, and $\oV_{i,j}=\oV_{i+1}\cdots\oV_j,\quad \oV_{0,0}=I.$

For simplicity of formulas we define $i_p:=2n$. By Theorem \ref{rm} and the decomposition \eqref{T_f^[2n]}, we obtain
\begin{align}
\label{rpdef even}
\nonumber
R_{2n,H,f}(V)&=T^{H,H+V,H,\ldots,H}_{f^{[2n]}}(V,\ldots,V)\\
\nonumber
&=\sum_{p=0}^{2n}(-1)^{p}\sum_{0\leq i_0<\cdots<i_{p-1}<i_p}
\oV_{0,i_0}T^{H_{i_0},\ldots,H_{i_p}}_{(fu^{p})^{[p]}}(\oV_{i_0,i_1},
\ldots,\oV_{i_{p-1},i_p})\\
&=:\sum_{p=0}^{2n}(-1)^{p}R^p_{2n,H,f}(V).
\end{align}

We obtain the following analogue of Lemma \ref{lem:measure for oV's}.
\begin{lem}\label{lem:measure for oV's even}
Let $n\in\N$, $n\geq2$. Let $H$ be self-adjoint in $\H$ and let $V\in\mB(\H)_\text{sa}$ satisfy $(H+V-i)^{-1}-(H-i)^{-1}\in\S^n$. For all $p\in\{0,\ldots,2n\}$, there exists a complex Radon measure $\mu_{n,p}$ such that
\begin{align}\label{eq:trace formula Rp even}
\Tr(R^p_{2n,H,f}(V))=\int (fu^p)^{(p)}u^{p+2}\,d\mu_{n,p}
\end{align}
for all $f\in\W^{p}_{2p+2}$, and such that
\begin{align}\label{wmunp even}
\norm{\mu_{n,p}}\leq c_{n,p}(1+\norm{V}^{2})\norm{V}^{n}\nrm{(H-i)^{-1}V(H-i)^{-1}}{n}^{n}.
\end{align}
\end{lem}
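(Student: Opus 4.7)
The proof parallels that of Lemma \ref{lem:measure for oV's} with modifications reflecting the structure of the decomposition \eqref{T_f^[2n]}. Fix $p\in\{0,\ldots,2n\}$ and a tuple $i=(i_0,\ldots,i_{p-1})$ with $0\le i_0<\cdots<i_{p-1}\le 2n-1$, and set $i_p:=2n$. Define $U_0:=\oV_{0,i_0}$ and $U_j:=\oV_{i_{j-1},i_j}$ for $j=1,\ldots,p$, so that by cyclicity of the trace the $i$th summand of $\Tr(R^p_{2n,H,f}(V))$ equals $\Tr\big(U_0 T^{H_{i_0},\ldots,H_{i_p}}_{(fu^p)^{[p]}}(U_1,\ldots,U_p)\big)$. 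Setting $\alpha_0:=n/(n-\lceil i_p/2\rceil+\lceil i_0/2\rceil)$ and $\alpha_j:=n/(\lceil i_j/2\rceil-\lceil i_{j-1}/2\rceil)$ places $U_j\in\S^{\alpha_j}$ with $\sum_{j=0}^p\alpha_j^{-1}=1$. As in the odd case, $J:=\{j\in\{1,\ldots,p\}:\alpha_j=\infty\}=\{j:i_j\text{ even},\,i_j=i_{j-1}+1\}$ has minimal distance $\ge 2$, and for $j\in J$ we have $U_j=V$ with $(H_{i_{j-1}}-i)^{-1}V(H_{i_j}-i)^{-1}\in\S^n$ by Lemma \ref{ric}. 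Moreover, $\alpha_0=\infty$ forces $i_0=0$, in which case $U_0=I$ and $H_{i_0}=H=H_{2n}=H_{i_p}$, so all hypotheses of Theorem \ref{thm:Extra expansion step} are satisfied.

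For $p\ge 1$, applying Theorem \ref{thm:Extra expansion step} with $g:=fu^p$ and $m:=p$ (noting that $f\in\W^p_{2p+2}$ guarantees $g\in\W^p_{p+2}$) produces a complex Radon measure $\mu_{n,p,i}$ satisfying
\begin{align*}
\Tr\big(U_0 T^{H_{i_0},\ldots,H_{i_p}}_{(fu^p)^{[p]}}(U_1,\ldots,U_p)\big)=\int (fu^p)^{(p)}u^{p+2}\,d\mu_{n,p,i},
\end{align*}
with $\|\mu_{n,p,i}\|\le c_{n,p,i}\,p^J_\alpha(U_0,\ldots,U_p;H_{i_0},\ldots,H_{i_p})$. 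The degenerate case $p=0$ reduces to $\Tr(\oV_{0,2n}\,f(H))$ and is handled directly by Lemma \ref{lem43}\eqref{item:tr bound ii} applied to $B_1=\oV_{0,2n}\in\S^1$, after recasting the resulting measure $\mu_2$ as $u^{-2}\mu_2$ to put the formula in the form \eqref{eq:trace formula Rp even}.

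Bounding $p^J_\alpha$ is done exactly as in the odd case. Lemma \ref{ric} shows that at most two factors of $(1+\|V\|)$ arise from switching resolvents at $H+V$ to resolvents at $H$: one from the unique occurrence of $\oV_1$ inside $U_0$ (when $i_0\ge 1$), and one from the at-most-one index $j\in J$ with $i_{j-1}=1$. A careful count shows that the operators $\oV_{0,i_0}$ and $\oV_{i_{j-1},i_j}$ for $j\notin J$ together contain all $n$ odd-indexed factors $\oV_{2i+1}$ and $n-|J|$ of the $n$ even-indexed factors $\oV_{2i}=V$. With $r=|J|/(n+|J|)$ and $1-r=n/(n+|J|)$, the exponent identities $r(2n)+(1-r)(n-|J|)=n$ and $(1-r)(|J|+n)=n$ yield
\begin{align*}
\|\mu_{n,p,i}\|\le c_{n,p,i}(1+\|V\|^2)\|V\|^n\|(H-i)^{-1}V(H-i)^{-1}\|_n^n,
\end{align*}
and summing the measures $\mu_{n,p,i}$ over all admissible tuples $i$ produces the desired $\mu_{n,p}$ with properties \eqref{eq:trace formula Rp even} and \eqref{wmunp even}. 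The chief obstacle is the combinatorial bookkeeping of these exponents across the factors of $p^J_\alpha$; the extra power of $\|V\|$ in \eqref{wmunp even} relative to \eqref{wmunp} is accounted for precisely by the additional even-indexed factor $\oV_{2n}=V$ present in the even decomposition \eqref{T_f^[2n]} but absent from its odd counterpart.
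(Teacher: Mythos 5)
Your proof is correct and follows essentially the same route as the paper's: the same indexing of summands, the same choices of $U_j$ and $\alpha_j$ (your $\alpha_0$ simplifies to the paper's $n(\lceil i_0/2\rceil)^{-1}$ since $\lceil i_p/2\rceil=n$), the same application of Theorem \ref{thm:Extra expansion step} to $g=fu^p$, and the same exponent bookkeeping yielding $\|V\|^n$. Your explicit treatment of the degenerate $p=0$ term via Lemma \ref{lem43}\eqref{item:tr bound ii} is a small extra care the paper leaves implicit, but it does not change the argument.
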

\begin{proof}
We fix $i=(i_0,\ldots,i_{p-1})$ satisfying $0\leq i_0<\cdots<i_{p-1}<i_p:=2n$.
We denote $U_0=\oV_{0,i_0}$ and $U_j=\oV_{i_{j-1},i_j}$ for all $j=1,\dots,p$. Hence,
\begin{align*}
\Tr\big(\oV_{0,i_0}T^{H_{i_0},\ldots,H_{i_p}}_{(fu^{p})^{[p]}}
(\oV_{i_0,i_1},\ldots,\oV_{i_{p-1},i_p})\big)=\Tr\big(U_0T^{H_{ i_0},\ldots,H_{i_p}}_{(fu^{p})^{[p]}}(U_1,\ldots,U_p)\big).
\end{align*}

We define $\alpha_0:=n(\lceil\frac{{i_0}}{2}\rceil)^{-1}$ and $\alpha_j:=n(\lceil\frac{i_j}{2}\rceil-\lceil\frac{i_{j-1}}{2}\rceil)^{-1}$ for $j=1,\ldots,p$. We obtain $\tfrac{1}{\alpha_0}+\ldots+\tfrac{1}{\alpha_p}=1$, $U_j\in\S^{\alpha_j}$ for $j=0,\ldots,p$, and
\begin{align*}
	J:=\{j\in\{1,\ldots,p\}:~\alpha_j=\infty\}=\{j\in\{1,\ldots,p\}~|~\text{$i_j$ is even and } i_{j}=i_{j-1}+1\}.
\end{align*}
For all $j\in J$ we have $U_j=\overline V_{i_{j-1},i_j}=\overline V_{i_j}=V$ and therefore, by the second resolvent identity,
\begin{align*}
(H_{i_{j-1}}-i)^{-1}U_j(H_{i_j}-i)^{-1}=(H_{i_{j-1}}-i)^{-1}V(H_{i_j}-i)^{-1}\in\S^n.
\end{align*}
Moreover, we have $U_0=\oV_{0,i_0}=I$ and $H_{i_0}=H=H_{i_p}$ whenever $i_0=0$, and $\alpha_0=n(\lceil\frac{{i_0}}{2}\rceil)^{-1}<\infty$ otherwise.
Therefore all assumptions of Theorem \ref{thm:Extra expansion step} are satisfied.
By applying Theorem \ref{thm:Extra expansion step} to $g=fu^{p}$ and $m=p$, we obtain a complex Radon measure $\mu_{n,p,i}$ such that
\begin{align}\label{eq:trace formula n,p,i even}
\Tr\big(U_0T^{H_{i_0},\ldots,H_{i_p}}_{(fu^{p})^{[p]}}(U_1,\ldots,U_p)\big)
=\int(fu^{p})^{(p)}u^{p+2}\, d\mu_{n,p,i},
\end{align}
for all $f\in \W^{p}_{2p+2}$ (as this implies that $g=fu^{p}\in\W^p_{p+2}$) and a constant $c_{n,p,i}>0$ satisfying
\begin{align}\label{eq:bound n,p,i even}
\norm{\mu_{n,p,i}}
&\leq c_{n,p,i} \,p^{J}_{\alpha}(U_0,\ldots,U_p;H_{i_0},\dots,H_{i_p}).
\end{align}

We now bound the factors of $p^{J}_{\alpha}(U_0,\ldots,U_p;H_{i_0},\dots,H_{i_p})$.
Note that
\begin{align*}
\norm{U_0}^r\cdots\norm{U_p}^r\leq \norm{V}^{2nr}
\end{align*}
and by Lemma \ref{ric},
\begin{align*}
\prod_{j\in J}\nrm{(H_{i_{j-1}}-i)^{-1}U_j(H_{i_j}-i)^{-1}}{n}^{1-r}
&=\prod_{j\in J}\nrm{(H_{i_{j-1}}-i)^{-1}V(H_{i_j}-i)^{-1}}{n}^{1-r}\\
&\leq (1+\norm{V})\nrm{(H-i)^{-1}V(H-i)^{-1}}{n}^{(1-r)|J|}.
\end{align*}
Note also that if $j\in J$, then $(j+1)\notin J$, $i_j=i_{j-1}+1$, and $\overline{V}_{i_j}=V$ is a factor in none of $U_k$, $k\notin J$. If $j\notin J$, then $U_j=\overline{V}_{i_{j-1},i_j}$ contains exactly
$\lceil\frac{i_j}{2}\rceil-\lceil\frac{i_{j-1}}{2}\rceil$ factors of the form $(H_{k-1}-i)^{-1}V(H_k-i)^{-1}$. Therefore, the operators $\oV_{0,i_0}$ and $\oV_{i_{j-1},i_j}$, $j\in\{1,\ldots,p\}\setminus J$, altogether contain as factors all the operators $\oV_i$ for odd $i$ and all but $|J|$ of the operators $\oV_i=V$ for even $i$. Hence, by H\"{o}lder's inequality and definition of $\alpha_k$,
\begin{align*}
\prod_{j\in\{0,\ldots,p\}\setminus J}\nrm{U_j}{\alpha_j}^{1-r}
&=\nrm{\oV_{0,i_0}}{\alpha_0}^{1-r}\prod_{j\in\{1,\ldots,p\}\setminus J}\nrm{\oV_{i_{j-1},i_j}}{\alpha_j}^{1-r}\\
&\leq \norm{V}^{(1-r)(n-|J|)}(1+\norm{V})\nrm{(H-i)^{-1}V(H-i)^{-1}}{n}^{(1-r)n}.
\end{align*}
Hence, by \eqref{pdef}, we obtain
\begin{align*}
&p^{J}_{\alpha}(U_0,\ldots,U_p;H_{i_0},\dots,H_{i_p})\\
&\leq \norm{V}^{2nr+(1-r)(n-|J|)}(1+\norm{V})^2\nrm{(H-i)^{-1}V(H-i)^{-1}}{n}^{(1-r)(|J|+n)}.
\end{align*}
Noticing that $r=\frac{|J|}{n+|J|}$ and, consequently, $1-r=\frac{n}{n+|J|}$, we obtain
\begin{align}\label{eq:bound pJ n,p,i even}
p^{J}_{\alpha}(U_0,\ldots,U_p;H_{i_0},\dots,H_{i_p})
\leq\norm{V}^{n}(1+\norm{V})^{2}\nrm{(H-i)^{-1} V(H-i)^{-1}}{n}^{n}.
\end{align}

By summing the measures $\mu_{n,p,i}$ over $i=(i_0,\dots,i_p)$ we obtain a complex Radon measure $\mu_{n,p}$ such that \eqref{eq:trace formula Rp even} follows from \eqref{eq:trace formula n,p,i even}. Moreover, \eqref{wmunp even} follows from \eqref{eq:bound n,p,i even} and \eqref{eq:bound pJ n,p,i even}.
\end{proof}

\begin{proof}[Proof of Theorem \ref{main even}]
By \eqref{rpdef even} and Lemma \ref{lem:measure for oV's even}, we obtain
\begin{align}
\label{rpvial even}
\Tr(R_{2n,H,f}(V))&=\sum_{p=0}^{2n}\int (fu^{p})^{(p)}u^{p+2}d\mu_{n,p}
\end{align}
for some Radon measures $\mu_{n,p}$ satisfying \eqref{wmunp} and $f\in\bigcap_{p=0}^{2n}\W^p_{2p+2}=\W^{2n}_{4n+2}$. By applying the higher-order Leibniz rule to \eqref{rpvial even},
%\begin{align*}
%\Tr(R_{2n-1,H,f}(V))=\sum_{p=0}^{2n-1}\sum_{l=0}^p\int f^{(2n-1)}u^{2n+p+3}\,d\tilde\mu_{n,p,l}
%\end{align*}
and applying Lemma \ref{lem:partial integration} to the result, we obtain a Radon measure $\breve\mu_n$ satisfying
\begin{align*}
\Tr(R_{2n,H,f}(V))=\int f^{(2n)}u^{4n+3}\,d\breve\mu_{n}
\end{align*}
for all $f\in\W^{2n}_{4n+3}$ and
\begin{align}\label{eq:almost final bound on SSM even}
\norm{\breve\mu_{n}}\leq c_{n,p}(1+\norm{V}^{2})\norm{V}^{n}\nrm{(H-i)^{-1}V(H-i)^{-1}}{n}^{n}
\end{align}
(see \eqref{wmunp even}).
By defining $d\mu_n:=u^{4n+3}d\breve\mu_n$ we obtain a measure $\mu_n$ such that
\begin{align*}
	\Tr(R_{2n,H,f}(V))=\int f^{(2n)} \, d\mu_n
\end{align*}
for all $f\in\W^{2n}_{4n+3}$ and
\begin{align}\label{eq:final bound on SSM even}
\norm{u^{-4n-3}d\mu_{n}}\leq c_{n,p}(1+\norm{V}^{2})\norm{V}^{n}\nrm{(H-i)^{-1}V(H-i)^{-1}}{n}^{n}.
\end{align}

Showing that such a measure is unique up to a polynomial of order $\leq 2n-1$, and subsequently transferring the properties of the measure $\mu_n$ to the spectral shift function $\eta_{2n}$ obtained in Theorem \ref{thm:eta_n}, is done exactly as in the proof of Theorem \ref{main}.
\end{proof}

\paragraph{Acknowledgements} The authors are grateful to Thomas Scheckter for useful comments on an early version of our manuscript. Both A.S. and T.v.N. were supported in part by NSF grant DMS-1554456. In addition, T.v.N. was supported in part by ARC grant FL17010005.

%%%%%%%%%%%%%%%%%%%%%%%%%%%%%%%%%%%%%%%%%%%%%%%%%%%%%%%%%%%%%%%%%%%%%%%%%%%%%%%%%%%%%%%%%%%%%%%%%%%%%%%%%%%%%%%%%%%%%%%%%%%%%%%%%%%%%%%%%%%%%%%%%%

\hspace{4pt}

T.v.N., School of Mathematics and Statistics, University of New South Wales, Kensington, NSW, 2052, Australia
~\\

A.S., Department of Mathematics and Statistics, University of New Mexico, 311 Terrace Street NE, Albuquerque, NM  87106, USA
\end{document}